\newtheorem{assumption}{Assumption}
\newtheorem{proposition}{Proposition}
\newtheorem{lemma}{Lemma}
\def\hat{\widehat}
\newcommand{\ve}{\varepsilon}
\newcommand{\mc}{\mathcal}
\newcommand{\mbb}{\mathbb}
\DeclareMathOperator*{\argmax}{\arg\!\max}
\begin{document}

{
\singlespacing
\title{\textbf{Asymptotic Properties of the Maximum Smoothed Partial Likelihood Estimator in the Change-Plane Cox Model}
\author{Shota Takeishi\footnote{I am grateful to my advisor Katsumi Shimotsu for his guidance and helpful suggestions. 
I would also like to thank the associate editor and the anonymous reviewer for their constructive comments.
This research was partially supported by JSPS KAKENHI Grant Number JP22J12024.}\\
Graduate School of Economics \\
University of Tokyo \\
shotakeishi2@gmail.com
}}
\maketitle
}
\begin{abstract}
	The change-plane Cox model is a popular tool for the subgroup analysis of survival data. Despite the rich literature on this model, there has been limited investigation into the asymptotic properties of the estimators of the finite-dimensional parameter. 
	Particularly, the convergence rate, not to mention the asymptotic distribution, has not been fully characterized for the general model where classification is based on multiple covariates. 
	To bridge this theoretical gap, this study proposes a maximum smoothed partial likelihood estimator and establishes the following asymptotic properties. 
	First, it shows that the convergence rate for the classification parameter can be arbitrarily close to $n^{-1}$ up to a logarithmic factor under a certain condition on covariates and the choice of tuning parameter. 
	Given this convergence rate result, it also establishes the asymptotic normality for the regression parameter.
\end{abstract}

\section{Introduction}
A change-plane model is a useful statistical framework for capturing the grouped heterogeneity of covariates effect on the outcome of interest. This method can be considered a combination of regression and classification in that it simultaneously detects a subgroup with differentiated covariates effect and infers the magnitude of this effect. Specifically, a threshold plane characterized by a single index of covariates and parameters splits the population into two groups with different regression functions. To achieve such flexibility, the literature has developed several versions of the change-plane model. 
For example, refer to \cite{seo2007smoothed}, \cite{Yu2020threshold}, \cite{lee2021factor}, and \cite{li2021multithreshold} for linear regression; \cite{mukherjee2020asymptotic} for binary classification; and \cite{zhang2021single} for quantile regression.

This study investigates the theoretical properties of an estimator for the change-plane Cox model, which is a change-plane model designed for survival analysis. Let $(T^{\circ}, Z, U, V, X) \subset \mbb R \times \mbb R^{p_1} \times \mbb R^{p_2} \times \mbb R \times \mbb R^{q}$ be a random vector defined on some underlying probability space $ (\Omega, \mc F, \mbb P)$ with the following conditional hazard model:
\begin{equation}  \label{model}
	\lambda(T^{\circ} | Z, U, V, X) = \lambda (T^{\circ}) \exp\{Z' \beta + U' \gamma 1\{V + X'\psi \geq 0 \} \},
\end{equation}
where we normalize a coefficient for $V$ to be 1 for identification purpose.
In this model, $(\beta, \gamma)$ is a regression parameter where $\beta$ represents a baseline covariate effect on survival time common to the entire population, and $\gamma$ denotes an enhanced effect specific to a particular subgroup defined by a set of covariates. Conversely, $\psi$ is a classification parameter expressing how particular covariates contribute to the classification of the subgroup. 

For the subgroup analysis of the survival data, several Cox-model-based approaches have been proposed to cope with the heterogeneity of covariate effects. \cite{negassa2005tree} develop a tree-structured model with the coefficients being different across the nodes of the tree.
\cite{wu2016subgroup} propose a logistic-Cox mixture model where the model is a mixture of two Cox models with different regression coefficients and the mixing proportion is a logistic function of covariates.
Meanwhile, \cite{hu2021subgroup} introduce a Cox mixture model, in which the population is split into several latent groups and
members in each group share the same regression coefficients.

In a more direct connection to the present study, several studies have provided theoretical guarantees for the inference of the change-plane Cox model. However, these studies have not fully established the asymptotic properties of the estimators of the model parameter. 
\cite{pons2003estimation} considers a special case of the change-plane Cox model where the classification is based on only one covariate. They propose the maximum partial likelihood estimator (MPLE) and derive its asymptotic distribution. However, its theoretical result cannot be readily applied to our general model that allows for multiple classification covariates. 
\cite{wei2018change} propose the MPLE for a general case like ours, and establish its consistency; however, they do not derive the rate of convergence or asymptotic distribution of the MPLE. \cite{he2018single} deal with a model similar to the change-plane Cox model and claim that there exists a sequence of local maximizers of their objective function possessing desirable asymptotic properties. However, out of the possibly many candidates, they do not indicate which sequence possesses such properties. Particularly, it is unknown whether any sequence of the global maximizers serves as such a sequence, which often forms the main theoretical interest of the literature. 
Due to this gap, the asymptotic theory for the general change-plane Cox model is still underdeveloped. 
With regard to testing the existence of change-plane for the change-plane Cox model, \cite{kang2017subgroup} propose a sup-score test for testing the null hypothesis of no change-plane: $\gamma = 0$.

This study proposes a new estimator for the change-plane Cox model and establishes the asymptotic properties of this estimator. 
In the spirit of a smoothed estimator for a binary choice model proposed in \cite{horowitz1992smoothed}, our estimator is defined as a maximizer of the smoothed partial likelihood instead of its unsmoothed version, as in \cite{wei2018change}.
This smoothing technique sidesteps the technical difficulty associated with the non-differentiability of the indicator function, and thus enables the standard argument based on Taylor's theorem in deriving the following asymptotic results. 
First, we prove that the estimator is consistent (Proposition \ref{consistency}).
Given this result, we derive the convergence rate of the estimator of the classification parameter (Proposition \ref{roc psi}) and subsequently prove the asymptotic normality of the estimator of the regression parameter (Proposition \ref{asy msple xi}). The convergence rate of the classification parameter estimate obtained in this study can be arbitrarily close to $n^{-1}$ up to a logarithmic factor, depending on a choice of the smoothing parameter. 

This study makes an original contribution to the literature in several ways.
First, as reviewed in the above paragraph, few existing studies establish the asymptotic distribution for the regression parameter in the change-plane Cox model. 
Therefore, this study contributes to filling this gap and paves the way for statistical inferences, such as confidence interval construction, with a strong theoretical foundation and practical implications.
Second, we explicitly derive the model identifiability (Proposition \ref{identification}), which \cite{wei2018change} implicitly assume, under certain regularity conditions.
Lastly, our proof strategy for deriving the convergence rate of the classification parameter (Proposition \ref{roc psi}) is novel.
Utilizing a maximal inequality of \cite{gine2001consistency}, our close-to-$n^{-1}$ rate of the classification parameter is faster than most of the rates of the smoothed estimators in the change-plane model literature that are shown to be at most $n^{-3/4}$ \citep[e.g.,][]{seo2007smoothed, li2021multithreshold, zhang2021single}.
Furthermore, although \cite{mukherjee2020asymptotic} derives the same close-to-$n^{-1}$ rate as we do in their model, our proof is original in that we directly deal with the score function when deriving the convergence rate.
Without invoking the general rate theorem for M-estimator (e.g. Theorem 3.4.1 of \cite{van1996weak}) as \cite{mukherjee2020asymptotic} do, our direct approach shortens the proof. 

We note that the recent work by \cite{deng2022maximum} independently proposes an unsmoothed maximum partial likelihood estimator in the change-plane Cox model 
and establishes the convergence rate and the asymptotic distribution. While their approach is free from the choice of tuning parameter and realizes a faster $n^{-1}$ convergence rate of the classification parameter,
our smoothing approach makes attractive computational methods, such as a modified Newton-Raphson algorithm, available for the estimation as pointed out by \cite{li2021multithreshold}.
Furthermore, the proof strategies for the asymptotic results of the unsmoothed estimator and the smoothed one are distinct.
In these respects, \cite{deng2022maximum} and the present study are mutually complementary.

The rest of the paper is organized as follows. Section 2 introduces the model setup, the estimator, and the notation. Section 3 introduces our assumptions and shows the consistency of our estimator. Based on this consistency result, Section 4 introduces an additional set of assumptions and derives the rate of convergence and the asymptotic normality for the classification and regression parameters, respectively. Section 5 investigates the finite sample performance of our estimator through Monte Carlo simulations.
Subsequently, in Section 6, we analyze real-world data from AIDS Clinical Trial Group Study 175 with the proposed method. Section 7 concludes the study. Appendix A.1 provides all the proofs of the propositions in the main text, and Appendix A.2 collects the auxiliary results and their proofs.

\section{Model and Estimator}

This study concerns the statistical inference for the finite-dimensional parameter $\theta := (\beta, \gamma, \psi)$ when the distribution of the random vector $(T^{\circ}, Z, U, V,  X)$ satisfies the conditional hazard restriction (\ref{model}) with $\theta = \theta_0$ and $\lambda(\cdot) = \lambda_0(\cdot)$. As is usual with survival analysis, due to censoring, we cannot obtain a full sample of $i.i.d.$ copies $\{ (T^{\circ}_i, Z_i, U_i, V_i, X_i) \}_{i = 1}^{n}$, where $n$ denotes the sample size. Namely, letting $C$ be a censoring time, which is also a random variable on $ (\Omega, \mc F, \mbb P)$, we can observe only a censored survival time and a censoring indicator, $(T, \delta) := (\min \{ T^{\circ}, C \}, 1\{ C \geq T^{\circ} \} )$, instead of $T^{\circ}$ itself. Hence, we conduct the inference based on $i.i.d.$ observations $\{ (T_i, \delta_i, Z_i, U_i, V_i, X_i) \}_{i = 1}^{n}$.

For an estimation of $\theta$, we define a partial likelihood as
\begin{equation} \label{partial likelihood}
	L_n (\theta) := \prod_{i \in \{k : \delta_k = 1 \}} \frac{\exp\{\eta (W_i, \theta) \}}{n^{-1} \sum_{j \in \{k : T_k \geq T_i \}} \exp \{\eta(W_j, \theta) \}},
\end{equation}
where we collect covariates $W = (Z, U, V, X)$ and define $\eta (W, \theta) := Z'\beta + U'\gamma 1 \{ V + X'\psi \geq 0 \}$ for brevity. Similarly, we define its scaled logarithmic version as $l_n (\theta) := n^{-1} \log L_n (\theta)$. Instead of directly maximizing $l_n (\theta)$, we maximize a smoothed version of this log-likelihood to estimate $\theta$. 
Specifically, we replace $\eta (W, \theta)$ with its smooth counterpart $\eta_n (W, \theta) = Z'\beta + U'\gamma \mathcal K \left( (V + X'\psi)/h_n \right)$ where $\mathcal K (\cdot)$ and $h_n$ are a smooth kernel and a smoothing parameter, whose detail is discussed in the following section. 
We now define a smoothed log partial likelihood (SLPL) as
\begin{equation} \label{smoothed log likelihood}
	l^*_{n} (\theta) := \frac{1}{n} \sum_{i = 1}^{n} \delta_i \left[\eta_n (W_i, \theta)
	- \log \left( \frac{1}{n} \sum_{j = 1}^{n} Y_j (T_i) \exp \left\{ \eta_n (W_j, \theta) \right\} \right) \right],
\end{equation}
where $Y_i (t) := 1\{ T_i \geq t \}$ is an at-risk process for $t \in [0, \infty)$. Then, we define a maximum smoothed partial likelihood estimator (MSPLE) $\hat \theta_n$ for $\theta$ as
\begin{equation} \label{MSPLE}
	\hat{\theta}_n := \argmax_{\theta \in \Theta} l^*_{n} (\theta),
\end{equation}
where $\Theta$ is the parameter space for $\theta$, which is discussed in the following section.

In the remainder of the paper, we use the following notations. Let $:=$ denote ``equals by definition.'' For $a \in \mbb R^{k}$, $\| a \|$ denotes a Euclidean norm of $a$. Define a matrix $a^{\otimes 2} := a a'$ for a real-valued vector $a$. For a $l \times m$ real-valued matrix $A = (a_{ij})_{ij}$, define $|A| := \max_{1 \leq i \leq l, 1 \leq j \leq m} |a_{i, j}|$. Let $\mc C$ and $\mc D$ be universal finite positive constants whose value may change from one expression to another. ``$A$ almost surely'' or equivalently ``$A \ a.s.$'' means that there exists $F \in \mc F$ with $\mbb P (F) = 0$ such that $\{ \omega \in \Omega : A \ \text{does not hold} \} \subset F$. For a sequence of random vectors $\{X_n \}_{n \in \mbb N}$ on $(\Omega, \mc F, \mbb P)$, let $X_n = o_p (1)$ denote ``$\| X_n \|$ converges to zero in probability as $n \rightarrow \infty$.'' 
For random vectors $Y_1$ and $Y_2$, let $F_{Y_2} (\cdot)$, $F_{Y_1 | Y_2} (\cdot | \cdot)$, and $f_{Y_1 | Y_2} (\cdot | \cdot)$ be a distribution of $Y_2$; a conditional distribution of $Y_1$, given $Y_2$; and a conditional density of $Y_1$, given $Y_2$ with respect to the Lebesgue measure; respectively. For notational convenience, we abbreviate 
\begin{align}
	\Lambda (t) &:= \int_{0}^{t} \lambda (u) du \label{Lambda}.
\end{align}
Particularly, let $\Lambda_0 (t)$ denote $\int_{0}^t \lambda_0 (u) du$. All the limits below are taken as $n \rightarrow \infty$, unless stated otherwise.

\section{Consistency of the MSPLE}
This section proves the consistency of the MSPLE defined in the previous section. 
We first introduce the following assumptions. \cite{wei2018change} prove the consistency of the non-smoothed MPLE with continuously distributed covariates $X$ under similar assumptions to ours.
Compared with \cite{wei2018change}, we additionally assume Assumption \ref{the parameter space}(b) and Assumptions \ref{covariates assumption}(b)-(d) in order to rigorously establish the model identifiability and relax the continuity assumption (Condition 3 of \cite{wei2018change}) to allow for discrete covariates in $X$.

\begin{assumption}
\label{a survival time and a censoring time}
(a) The distribution of $T^{\circ}$ has a continuous conditional density with respect to the Lebesgue measure, given covariates $W$. (b) There exists $\tau \in (0, \infty)$ such that $\mbb P(C \geq \tau | W) = \mbb P(C = \tau | W) > 0$ a.s. and $\mbb P(T^{\circ} \geq \tau | W) > 0$ a.s. (c) $T^{\circ}$ and $C$ are statistically independent given covariates $W$.
\end{assumption}

\begin{assumption}
\label{the parameter space}
(a) The parameter space $\Theta = \Theta^{\beta} \times \Theta^{\gamma} \times \Theta^{\psi}$, for $\theta$, is a compact convex subset of $\mbb R^{p_1 + p_2 + q}$, and the true parameter $\theta_0$ lies in the interior $\Theta^{\circ}$ of $\Theta$. 
(b) The parameter space $\Pi$, for $\lambda$, is a collection of non-negative and nonzero continuous functions. (c) The true parameter $\gamma_0$ satisfies $\gamma_0 \neq 0$
\end{assumption}

\begin{assumption}
\label{covariates assumption}
(a) $(Z, U, X)$ lies in a bounded subset of $\mbb R^{p_1 + p_2 + q }$. (b) Conditionally on $X$, the distribution of $V$ has a positive density everywhere with respect to the Lebesgue measure a.s.
(c) $\mathbb E[X X']$ is positive definite.
(d) $Var(Z| V, X)$ and $Var(U | V, X)$ are positive definite a.s.
\end{assumption}

The continuity of the survival time $T^{\circ}$ in Assumption \ref{a survival time and a censoring time}(a) is often assumed in the Cox model (see page 425 on \cite{van2000asymptotic}, for example). Assumption \ref{a survival time and a censoring time}(b) is common in survival analysis. Practically, this assumption is satisfied if the follow-up of subjects is terminated all at once, at some point in the study, with some portion of the population still surviving and uncensored. Assumption \ref{a survival time and a censoring time}(c) is called independent censoring in the literature and is assumed on page 443 on \cite{pons2003estimation} and in Assumption 2 of \cite{wei2018change}. 
Continuity in Assumption \ref{the parameter space}(b) is mainly for the model identifiability. This condition covers a variety of popular distributions of $T^{\circ}$ in survival analysis such as Weibull distribution. Assumption \ref{the parameter space}(c) is required for identifiability of $\psi_0$. In other words, there has to be a subgroup. Assumption \ref{covariates assumption}(a) is set mainly for the relevant class of functions to belong to a Glivenko-Cantelli class in the proof of the consistency. 
Assumption \ref{covariates assumption}(b) is for identifiability and is usually assumed in the change-plane models. For example, see Assumption 1(d) of \cite{seo2007smoothed} and Assumption 1 of \cite{mukherjee2020asymptotic}. This assumption can be weakened to allow for bounded $V$ as ``Conditionally on $X$, the distribution of $V$ has a positive density on an interval $A$ such that $\inf A < \inf_{\psi \in \Theta^{\psi}, x \in \text{supp}(X) } - |x'\psi| $ and $\sup A > \sup_{\psi \in \Theta^{\psi}, x \in \text{supp}(X) } |x'\psi| $," where $\text{supp} (X)$ is a support of the distribution of $X$. 
Intuitively, the support of $V$ has to cover a possible variation in $X'\psi$. For simplicity, however, we impose the unbounded condition throughout.  


The following proposition shows the identifiability of $(\theta_0, \lambda_0)$, which is a prerequisite for consistency.

\begin{proposition}\label{Identification}
\label{identification}
Assume Assumptions \ref{a survival time and a censoring time}-\ref{covariates assumption} hold. Then, $(\theta_0, \lambda_0)$ is identified from the distribution of $(T, \delta, W)$.
\end{proposition}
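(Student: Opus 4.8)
The plan is to show that any pair $(\theta_1, \lambda_1)$ satisfying the model and inducing the same law of $(T, \delta, W)$ as $(\theta_0, \lambda_0)$ must in fact coincide with it. I would organize the argument in three stages: first recover the conditional event intensity from the observable law; then separate its time and covariate factors to reduce identification to a pointwise statement about $\eta$; and finally exploit the covariate assumptions to peel off $\beta_0$, then $\psi_0$, then $\gamma_0$, and lastly $\lambda_0$.

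For the first stage, I would recover the conditional hazard $\lambda(t \mid W) := \lambda_0(t)\exp\{\eta(W, \theta_0)\}$ of $T^{\circ}$ from the distribution of $(T,\delta,W)$. Under the conditional independence of $T^{\circ}$ and $C$ given $W$ (Assumption \ref{a survival time and a censoring time}(c)), the cause-one subdistribution $\mbb P(T \le t, \delta = 1 \mid W)$ and the at-risk probability $\mbb P(T \ge t \mid W)$ both factor through the conditional survival functions of $T^{\circ}$ and $C$, so that the derivative of the former divided by the latter equals exactly $\lambda_0(t)\exp\{\eta(W,\theta_0)\}$. Assumption \ref{a survival time and a censoring time}(b) guarantees $\mbb P(T \ge t \mid W) > 0$ for every $t \le \tau$ a.s., so this intensity is identified on $[0,\tau]$. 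Consequently, if $(\theta_1,\lambda_1)$ yields the same law, then $\lambda_0(t)\exp\{\eta(W,\theta_0)\} = \lambda_1(t)\exp\{\eta(W,\theta_1)\}$ for all $t\in[0,\tau]$ and a.e. $W$. Fixing one time point at which both baseline hazards are nonzero and dividing by the same expression evaluated at a reference covariate value cancels the $t$-dependence and yields $\eta(W,\theta_1) - \eta(W,\theta_0) = k$ a.s. for a constant $k$, that is, $Z'(\beta_1 - \beta_0) + U'\gamma_1 1\{V + X'\psi_1 \ge 0\} - U'\gamma_0 1\{V + X'\psi_0 \ge 0\} = k$ a.s. Restricting to the positive-probability event on which both indicators vanish (attained by taking $V$ sufficiently negative given $X$, using Assumption \ref{covariates assumption}(b)) collapses the relation to $Z'(\beta_1 - \beta_0) = k$, and conditioning on $(V,X)$ together with $\mathrm{Var}(Z \mid V, X) \succ 0$ (Assumption \ref{covariates assumption}(d)) forces $\beta_1 = \beta_0$ and $k = 0$.

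The main obstacle is the remaining identification of $\psi_0$, where the geometry of the separating hyperplane enters. After the previous step the identity reduces to $U'\gamma_1 1\{V + X'\psi_1 \ge 0\} = U'\gamma_0 1\{V + X'\psi_0 \ge 0\}$ a.s. If $\psi_1 \neq \psi_0$, then $Y := X'(\psi_1 - \psi_0)$ is non-degenerate by Assumption \ref{covariates assumption}(c), so $\mbb P(Y > 0) > 0$ or $\mbb P(Y < 0) > 0$; on either event the positive density of $V$ given $X$ (Assumption \ref{covariates assumption}(b)) produces a positive-probability portion of the symmetric difference of $\{V + X'\psi_0 \ge 0\}$ and $\{V + X'\psi_1 \ge 0\}$, on which exactly one indicator equals $1$. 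There the identity forces $U'\gamma_0 = 0$ (or $U'\gamma_1 = 0$) a.s., and $\mathrm{Var}(U \mid V, X) \succ 0$ then yields $\gamma_0 = 0$, contradicting Assumption \ref{the parameter space}(c). Hence $\psi_1 = \psi_0$, after which the identity becomes $U'(\gamma_1 - \gamma_0) 1\{V + X'\psi_0 \ge 0\} = 0$ a.s.; since $\{V + X'\psi_0 \ge 0\}$ has positive probability and $U$ is conditionally non-degenerate, $\gamma_1 = \gamma_0$. Finally, with $\theta_1 = \theta_0$ and $k = 0$ the factorization gives $\lambda_0(t) = \lambda_1(t)$ wherever they are nonzero, and continuity (Assumption \ref{the parameter space}(b)) extends this to all of $[0,\tau]$, completing the identification of $(\theta_0,\lambda_0)$.
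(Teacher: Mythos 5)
Your argument is correct and follows essentially the same route as the paper's: you recover $\lambda_0(t)e^{\eta(W,\theta_0)}$ from the cause-specific subdistribution and the at-risk probability, reduce to the a.s.\ identity $\eta(W,\theta_1)-\eta(W,\theta_0)=k$, and then exploit the same three regions (both indicators zero, the symmetric difference, both indicators one) together with Assumptions \ref{covariates assumption}(b)--(d) and \ref{the parameter space}(c); the only difference is that you peel off $\beta$, $\psi$, $\gamma$ sequentially where the paper shows $\mathbb E[\{\eta(W,\theta_0)-\eta(W,\theta_1)+c\}^2]>0$ via a two-case analysis, which is an equivalent packaging. One small loose end: in the branch of the $\psi$ step where the positive-probability piece of the symmetric difference is the one on which only the $\psi_1$-indicator equals $1$, the identity yields $U'\gamma_1=0$ and hence $\gamma_1=0$, not $\gamma_0=0$; you then need one more line (substitute $\gamma_1=0$ back into the identity and restrict to $\{V+X'\psi_0\ge 0\}$, which has positive probability by Assumption \ref{covariates assumption}(b), to force $\gamma_0=0$ and reach the contradiction with Assumption \ref{the parameter space}(c)).
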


As reviewed in the introduction, the consistency result in \cite{wei2018change} also hinges on model identifiability, but they implicitly assume identifiability (see page 902 on \cite{wei2018change}).

We make an additional assumption concerning the properties of a kernel function $\mc K$ and a sequence of tuning parameters $h_n$ that appear in (\ref{smoothed log likelihood}).

\begin{assumption}\label{kernel and smoothing}
(a) A kernel function $\mc K: \mbb R \rightarrow \mbb R$ is a monotone continuous function such that $\lim_{s \rightarrow - \infty} \mc K (s) = 0$ and $\lim_{s \rightarrow \infty} \mc K (s) = 1$. (b) $\{ h_n \}_{n = 1}^{\infty}$ is a sequence of positive real numbers that satisfies $\lim_{n \rightarrow \infty} h_n = 0$.
\end{assumption}

The basic idea of Assumption \ref{kernel and smoothing} is to approximate the indicator function by the c.d.f. of a random variable, with the precision of the approximation increasing with an increase in the sample size. This idea appears in many estimation problems where originally discontinuous objective functions are replaced by their smooth counterparts \citep[e.g.,][]{horowitz1992smoothed, seo2007smoothed,zhang2021single}.

The following proposition shows consistency of $\widehat \theta_n$.

\begin{proposition}\label{consistency}
Assume Assumptions \ref{a survival time and a censoring time}-\ref{kernel and smoothing} hold. Then, $\hat{\theta}_n$ converges to $\theta_0$ almost surely.
\end{proposition}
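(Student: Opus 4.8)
The plan is to prove consistency via the standard M-estimation route: establish that a suitable population limit of the smoothed objective $l^*_n(\theta)$ is uniquely maximized at $\theta_0$, and that $l^*_n$ converges uniformly to this limit over the compact set $\Theta$. Concretely, I would first introduce the population counterpart of the \emph{unsmoothed} log partial likelihood. Writing $S^{(0)}(t,\theta) := \mbb E[Y(t)\exp\{\eta(W,\theta)\}]$, the natural limit is
\begin{equation*}
	l(\theta) := \mbb E\left[\delta\left(\eta(W,\theta) - \log S^{(0)}(T,\theta)\right)\right].
\end{equation*}
The argument then splits into two pieces: (i) a deterministic identification step showing $l(\theta) < l(\theta_0)$ for all $\theta \neq \theta_0$, and (ii) a stochastic uniform-convergence step showing $\sup_{\theta\in\Theta}|l^*_n(\theta) - l(\theta)| \to 0$ almost surely. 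Combined with compactness of $\Theta$ (Assumption \ref{the parameter space}(a)), these give $\hat\theta_n \to \theta_0$ by the usual argmax continuity argument.

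For step (i), I would lean on Proposition \ref{identification}: identifiability of $(\theta_0,\lambda_0)$ should be leveraged to show the population partial-likelihood criterion separates $\theta_0$ from every other $\theta$. The cleanest route is to recognize $l(\theta)-l(\theta_0)$ as (the negative of) a Kullback--Leibler-type divergence between the partial-likelihood contributions under $\theta$ and under $\theta_0$, so that Jensen's inequality makes the difference nonpositive, with equality forcing $\eta(W,\theta)=\eta(W,\theta_0)$ almost surely along the risk sets. Assumption \ref{covariates assumption}(d) (nondegeneracy of $\mathrm{Var}(Z|V,X)$ and $\mathrm{Var}(U|V,X)$) together with Assumption \ref{the parameter space}(c) ($\gamma_0\neq 0$) and the positive conditional density of $V$ (Assumption \ref{covariates assumption}(b)) should then translate this equality into $\theta=\theta_0$, closing identification of the criterion at the population level.

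For step (ii), the uniform convergence, the main work is to control two separate approximation errors. First, the smoothing error: replacing $\eta$ by $\eta_n(W,\theta)=Z'\beta+U'\gamma\,\mc K((V+X'\psi)/h_n)$ introduces a bias that I would bound uniformly using Assumption \ref{kernel and smoothing} --- monotonicity and the limits $\mc K(-\infty)=0,\ \mc K(\infty)=1$ make $\mc K((V+X'\psi)/h_n)\to 1\{V+X'\psi\ge 0\}$ pointwise as $h_n\to 0$, and boundedness of $(Z,U,X)$ (Assumption \ref{covariates assumption}(a)) plus dominated convergence upgrades this to a uniform (in $\theta$) vanishing bias. Second, the empirical process error: I would show the relevant class of functions $\{(T,\delta,W)\mapsto \delta(\eta_n(W,\theta)-\log\,n^{-1}\sum_j Y_j(T)\exp\{\eta_n(W_j,\theta)\})\}$ is Glivenko--Cantelli, so that the empirical average converges to its expectation uniformly, almost surely. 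Boundedness of the covariates (Assumption \ref{covariates assumption}(a)) keeps $\eta_n$ and $S^{(0)}$ bounded and bounded away from zero on $[0,\tau]$ (using Assumption \ref{a survival time and a censoring time}(b) to control the risk set near $\tau$), which is what ensures the logarithm is well-behaved and the envelope is integrable.

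The hard part will be step (ii), specifically the interplay between the $h_n$-dependent smoothing and the empirical process: because $\eta_n$ itself changes with $n$, the relevant function class is a \emph{triangular array} rather than a fixed class, so a single Glivenko--Cantelli theorem does not immediately apply. I would handle this by a triangle-inequality decomposition --- bounding $|l^*_n(\theta)-\tilde l_n(\theta)|$ (smoothing error, controlled deterministically and uniformly via the kernel assumptions) and $|\tilde l_n(\theta)-l(\theta)|$ (genuine empirical process error for the fixed unsmoothed class), where $\tilde l_n$ is the unsmoothed empirical criterion. The fixed unsmoothed class of indicators $\{1\{V+X'\psi\ge 0\}:\psi\in\Theta^\psi\}$ is a VC class, hence Glivenko--Cantelli, and the ratio/logarithm structure is handled by showing the inner empirical sum $n^{-1}\sum_j Y_j(t)\exp\{\eta(W_j,\theta)\}$ converges uniformly in $(t,\theta)$ to $S^{(0)}(t,\theta)$, again via a VC/bracketing argument combined with the monotonicity of $t\mapsto Y_j(t)$. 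Care is needed to ensure the denominator stays uniformly bounded below so that the logarithm's continuity yields the desired convergence without blow-up.
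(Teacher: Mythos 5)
Your proposal is correct in outline, but it follows a genuinely different route from the paper's published proof. You take the classical M-estimation path: define the population partial-likelihood criterion $l(\theta) = \mbb E[\delta(\eta(W,\theta)-\log S^{(0)}(T,\theta))]$, prove it is uniquely maximized at $\theta_0$ via a conditional-Jensen (Kullback--Leibler) argument on the risk sets, and combine with uniform a.s. convergence. The paper instead avoids ever proving uniqueness of the maximizer of the \emph{partial}-likelihood criterion: it represents the MSPLE as a joint maximizer of a full likelihood $Q_n(\theta,\Lambda)$ over $\theta$ and step-function baseline hazards (following van der Vaart's profile-likelihood treatment of the Cox model), derives the basic inequality $\log Q_n(\hat\theta_n,\hat\Lambda_{n,\hat\theta_n})\ge \log Q_n(\theta_0,\widetilde\Lambda_n)$, establishes four uniform a.s. limits by Glivenko--Cantelli arguments plus Horowitz's smoothing lemma, passes to a subsequential limit $\theta_\infty$, and then invokes the information inequality for the genuine density of $(\delta,T,W)$ together with Proposition \ref{Identification} to force $\theta_\infty=\theta_0$. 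What the paper's route buys is that identification of $(\theta_0,\lambda_0)$ feeds directly into the KL inequality for the actual data distribution, so no separate ``population partial likelihood is uniquely maximized at $\theta_0$'' lemma is needed; what your route buys is a more familiar and self-contained argmax argument. Your step (i) does go through, but be careful with the equality case of Jensen: it forces $\eta(W,\theta)-\eta(W,\theta_0)$ to be a.s.\ \emph{constant} on the risk sets, not a.s.\ zero, so you must additionally rule out a nonzero constant offset --- this is exactly the role of the constant $c=\log\lambda_0(t')-\log\lambda_1(t')$ in the paper's proof of Proposition \ref{Identification}, and the case analysis there (using Assumptions \ref{the parameter space}(c) and \ref{covariates assumption}(b)--(d)) is what you would need to reproduce. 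The uniform-convergence machinery you describe (triangle-inequality split into a deterministic smoothing error controlled by the kernel assumptions and a fixed-class Glivenko--Cantelli step, with the at-risk denominator bounded below via $\mbb P(T\ge\tau)>0$) matches what the paper uses for its limits (a)--(d), so that part of your plan is essentially the same.
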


\section{Asymptotic Distribution of MSPLE}
This section derives the asymptotic distribution of the MSPLE for $\xi := (\beta, \gamma)$ and establishes the convergence rate of the MSPLE for $\psi$. When $\hat{\theta}_n$ lies in a sufficiently small neighborhood of $\theta_0$,  the first-order condition and applying Taylor's Theorem coordinate-wise give 
\begin{align}\label{expansion}
	0 = n^{1/2} \nabla_{\theta}  l^*_{n} (\hat{\theta}_n) =  n^{1/2} \nabla_{\theta}  l^*_{n} (\theta_0) +  \nabla_{\theta \theta'} l^*_{n} (\Bar \theta_n) n^{1/2} (\hat{\theta}_n - \theta_0).
\end{align}
where $\Bar \theta_n$ lies on the path connecting $\widehat \theta_n$ and $\theta_0$ and may take different values row by row of the matrix.
Particularly, taking the first $p_1 + p_2$ elements of (\ref{expansion}) yields
\begin{align}\label{expansion xi 0}
	\nabla_{\xi \xi'} l^*_{n} (\Bar \theta_n) n^{1/2} (\hat{\xi}_n - \xi_0)  = - n^{1/2} \nabla_{\xi}  l^*_{n} (\theta_0) - \nabla_{\xi \psi'} l^*_{n} (\Bar \theta_n) n^{1/2} (\hat{\psi}_n - \psi_0).
\end{align}
In the following, we show $\hat{\psi}_n - \psi_0 = O_p (h_n)$ so that 
$\nabla_{\xi \psi'} l^*_{n} (\Bar \theta_n)  n^{1/2} (\hat{\psi}_n - \psi_0) = o_p (1)$, under an additional assumption on the rate of convergence of $h_n$. 
Then, we apply the central limit theorem to $n^{1/2} \nabla_{\xi}  l^*_{n} (\theta_0)$ and 
show a convergence in the probability of $\nabla_{\xi \xi'} l^*_{n} (\Bar \theta_n)$ to some positive definite matrix in order to derive the asymptotic distribution of $n^{1/2}(\hat{\xi}_n - \xi_0)$.

We introduce an additional set of assumptions.
These types of assumptions are common in the estimation problem where an indicator function in the model is smoothed by a kernel function \citep[e.g.,][]{horowitz1992smoothed, seo2007smoothed, li2021multithreshold}.
We, however, relax the assumption on the rate of $h_n$ (Assumption \ref{kernel and smoothing 2}(c)), compared to most of the works in the literature.

\begin{assumption}\label{kernel and smoothing 2}
(a) A kernel function $\mc K$ is differentiable with its first derivative being of bounded 1-variation: $\sup \left\{\sum_{i = 1}^n |\mathcal K' (x_i) - \mathcal K' (x_{i - 1})|: -\infty < x_0 < \cdots < x_n < \infty, n \in \mbb N \right\} < \infty$ (b) The first derivative of a kernel function $\mc K$ satisfies $\int \mc |s|^{i} | \mc K'(s)|^{j} ds < \infty$ for $i = 0, 1$ and $j = 1, 2$ and  $\lim_{s \rightarrow \pm \infty} s^2 \mc K'(s) = 0$. (c) $\{ h_n \}_{n \in \mbb N}$ satisfies $(\log n)/(n h_n) \rightarrow 0$ and $n^{1/2} h_n \rightarrow 0$.
\end{assumption}

\begin{assumption}\label{density}
A random variable $S := V + X'\psi_0$ has a differentiable conditional density $f_{S | (T, Z, U, X)} (\cdot | r)$ for a given $(T, Z, U, X) = r$ with respect to the Lebesgue measure with the following conditions: \\ $\sup_{(s, r) \in \mbb R^2} f_{S | (T, Z, U, X)} (s | r) < \infty$ and $\sup_{(s, r) \in \mbb R^2} |f_{S | (T, Z, U, X)}' (s | r)| < \infty$.
\end{assumption}

\begin{assumption}\label{moment condition 2}
(a) $\mathbb E[Y(\tau) X X' (U' \gamma_0)^2 f_{S | (T, Z, U, X)} (0 | T, Z, U, X)]$ is positive definite. \\ (b) $\mathbb E[Y(\tau) e^{Z'\beta_0} (U' \gamma_0 - e^{U'\gamma_0} + 1)f_{S|T, Z, U, X} (0 | T, Z, U, X) | X] < 0 \ a.s.$ \\ and $\mathbb E[Y(\tau)e^{Z'\beta_0} (e^{U'\gamma_0} (U'\gamma_0 - 1) + 1) f_{S|T, Z, U, X} (0 | T, Z, U, X) | X] > 0 \ a.s.$
\end{assumption}

Assumptions \ref{kernel and smoothing 2}(a) and (b) are concerned with the properties of a kernel function $\mc K$, and they are similar to those in \cite{horowitz1992smoothed} and \cite{seo2007smoothed}. A sufficient condition for Assumption \ref{kernel and smoothing 2}(a) is $\int |\mc K'' (s)| ds < \infty$, which is implied by  Assumption 7(a) of \cite{horowitz1992smoothed} or Assumption 3 of \cite{seo2007smoothed}. The c.d.f. of a Gaussian distribution, for example, qualifies as $\mc K$ satisfying Assumptions \ref{kernel and smoothing 2}(a) and (b). Assumption \ref{kernel and smoothing 2}(c) indicates that the speed at which $h_n$ converges to zero can be arbitrarily close to the order of $n^{-1}$ up to a logarithmic factor. This convergence 
rate of $h_n$ is fast, relative to the one appearing in the literature of a change-plane linear regression model, where the rate cannot be faster than the order of $n^{- 1/2}$. For example, see Assumption 3(e) in \cite{seo2007smoothed} and Condition 6 in \cite{li2021multithreshold}. However, for a binary classification model, similar to this study, \cite{mukherjee2020asymptotic} assume $(\log n)/(n h_n) \rightarrow 0$. Assumption \ref{density} is similar to Assumption 2(e) of \cite{seo2007smoothed}, though our assumption is more restrictive in assuming the boundedness of the first derivative. 
Assumption \ref{moment condition 2}(b) is not strong because $s - e^s + 1 \leq 0$ and $e^s (s - 1) + 1 \geq 0$ for any $s \in \mbb R$.

Under these assumptions, we obtain the convergence rate of the MSPLE $\hat \psi_n$ for $\psi$.

\begin{proposition}\label{roc psi}
	Assume Assumptions \ref{a survival time and a censoring time}-\ref{moment condition 2} hold. Then, $\hat \psi_n - \psi_0 = O_p (h_n)$.
\end{proposition}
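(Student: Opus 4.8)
The plan is to establish the rate by a peeling (shell) argument for M-estimators with a drifting criterion, in the same spirit as the smoothed change-plane analyses of \cite{seo2007smoothed} and \cite{mukherjee2020asymptotic}. Write $\mbb M_n(\theta) := \mbb E[l^*_n(\theta) - l^*_n(\theta_0)]$ for the deterministic part of the recentered smoothed criterion (which depends on $n$ only through $h_n$) and $\mbb G_n(\theta) := [l^*_n(\theta) - l^*_n(\theta_0)] - \mbb M_n(\theta)$ for the recentered empirical fluctuation. Since $\hat\theta_n$ maximizes $l^*_n$, we have $\mbb M_n(\hat\theta_n) + \mbb G_n(\hat\theta_n) \ge 0$. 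By the consistency of Proposition \ref{consistency}, $\hat\theta_n$ eventually lies in an arbitrarily small neighborhood of $\theta_0$, so it suffices to control increments on shrinking neighborhoods and to show that $\mbb P(\|\hat\psi_n - \psi_0\| \ge M h_n) \to 0$ as $M \to \infty$, uniformly in large $n$.

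First I would analyze the curvature of $\mbb M_n$. After replacing the empirical at-risk average $n^{-1}\sum_j Y_j(t)\exp\{\eta_n(W_j,\theta)\}$ by its expectation, via a uniform Glivenko--Cantelli approximation already available from the consistency proof, $\mbb M_n$ reduces to expectations of i.i.d.\ terms. The crucial geometric fact is that, because $\gamma_0 \neq 0$ (Assumption \ref{the parameter space}(c)), the unsmoothed expected criterion has a downward \emph{kink} at $\psi_0$, and the kernel rounds this kink over a window of width $h_n$. Concretely, I would show that for $\|\psi-\psi_0\| \ge M h_n$ one has the linear separation $\mbb M_n(\theta) - \mbb M_n(\theta_0) \le -c\,\|\psi-\psi_0\|$, where the slope $c>0$ comes from the jump $U'\gamma_0$, the boundary density $f_{S|\cdot}(0)$ (Assumption \ref{density}), and the one-sided sign conditions of Assumption \ref{moment condition 2}(b) (whose integrands $U'\gamma_0 - e^{U'\gamma_0}+1$ and $e^{U'\gamma_0}(U'\gamma_0-1)+1$ are exactly the per-point contributions to the two directional derivatives). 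The positive-definiteness in Assumption \ref{moment condition 2}(a) makes this separation nondegenerate in every direction and governs the rounded $\asymp h_n^{-1}$ quadratic curvature inside the window; the $\xi$-directions contribute ordinary $O(1)$ curvature and lower-order cross terms, handled with Assumption \ref{covariates assumption}(d).

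Then I would bound the empirical part with a maximal inequality. The increments are built from $\mc K((V+X'\psi)/h_n)$, and since $\mc K$ is monotone with $\mc K'$ of bounded $1$-variation (Assumption \ref{kernel and smoothing 2}(a)) and the covariates are bounded (Assumption \ref{covariates assumption}(a)), the classes indexed by $\psi$ are of VC/Euclidean type with finite uniform entropy integral uniformly in $n$. A direct computation, using the boundedness of $f_{S|\cdot}$ (Assumption \ref{density}), gives that the $L_2$-variance of these increments over $\{\|\psi-\psi_0\|\le\delta\}$ is of order $\delta$ once $\delta \ge h_n$, whence $\mbb E\sup_{\|\psi-\psi_0\|\le\delta}|\mbb G_n(\theta)| \lesssim \sqrt{\delta\,\log n / n}$. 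Peeling over shells $\{\|\psi-\psi_0\| \in (2^{j-1}M h_n, 2^j M h_n]\}$ and comparing the linear decrease $c\,\delta \asymp c\,2^j M h_n$ with the fluctuation $\sqrt{\delta\log n/n}$, the ratio is of order $\sqrt{2^j M}\,\sqrt{n h_n/\log n}$, which diverges precisely because $n h_n/\log n \to \infty$ by Assumption \ref{kernel and smoothing 2}(c). A union bound over $j\ge 1$ then makes $\mbb P(\|\hat\psi_n-\psi_0\|\ge M h_n)$ vanish as $M\to\infty$, yielding $\hat\psi_n - \psi_0 = O_p(h_n)$. The rate is bias-dominated: the estimator is simply confined to the $O(h_n)$ smoothing window, within which the blurred criterion cannot resolve $\psi_0$ more sharply.

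The main obstacle will be the curvature computation together with its matching empirical-process bound at the correct $h_n$-scale: one must simultaneously (i) extract the linear kink of $\mbb M_n$ outside the window and the rounded $h_n^{-1}$ quadratic inside it, uniformly over directions and over the neighborhood, controlling all remainder terms of the expansion (this is where Assumptions \ref{density} and \ref{moment condition 2} enter decisively), and (ii) verify the variance and entropy bounds for classes whose increments concentrate in a $V$-window of width $O(\max(\delta,h_n))$, which is exactly what monotonicity and the bounded-$1$-variation hypothesis on $\mc K'$ are designed to handle, with the resulting $\log n$ factor absorbed by Assumption \ref{kernel and smoothing 2}(c). A secondary difficulty is that the $\xi$- and $\psi$-directions converge at different rates, so the argument must verify that the cross-curvature terms are negligible relative to $\|\psi-\psi_0\|$ in the peeling region; the non-i.i.d.\ Cox denominator also requires the uniform approximation step noted above before any of these expectations can be taken.
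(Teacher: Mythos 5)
Your proposal is correct in outline but takes a genuinely different route from the paper. You run a criterion-based peeling argument: a linear ``kink'' drift bound $\mbb M_n(\theta)-\mbb M_n(\theta_0)\le -c\|\psi-\psi_0\|$ outside the $O(h_n)$ smoothing window, matched against a localized fluctuation bound of order $\sqrt{\delta\log n/n}$ over shells. The paper instead works with the first-order condition: it computes $\nabla_\psi l_n^*$ explicitly, uses the maximal inequality (2.5) of \cite{gine2001consistency} to prove the single uniform bound $\sup_{\theta\in\Theta_n}|\nabla_\psi l_n^*(\theta)-\tilde S_p^\psi(\psi)|=o_p(1)$ over one shrinking neighborhood (Lemma \ref{lem_roc}), deduces $\tilde S_p^\psi(\hat\psi_n)=o_p(1)$ from $\nabla_\psi l_n^*(\hat\theta_n)=0$, and then shows via the change of variables $\rho=(s+x'(\psi-\psi_0))/h_n$ that the population score projected on $\ddot\psi/\|\ddot\psi\|$, with $\ddot\psi=(\psi-\psi_0)/h_n$, is uniformly bounded away from zero on $\{\|\ddot\psi\|>M\}$; this is where Assumption \ref{moment condition 2}(b) and the Paley--Zygmund bound of Lemma \ref{lem: matrix} enter, through exactly the one-sided limits $u'\gamma_0-e^{u'\gamma_0}+1$ and $e^{u'\gamma_0}(u'\gamma_0-1)+1$ that you identify. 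The score route buys a cheaper empirical-process step (no shell-by-shell localization) at the price of requiring differentiability of $\mc K$ and the tail sign conditions on the population score; your criterion route is the more standard rate-of-convergence machinery and would extend to nonsmooth criteria, but the step you flag---the localized $\sqrt{\delta\log n/n}$ bound for increments of the log-of-empirical-average Cox denominator, which is not an i.i.d.\ sum---is the genuinely delicate part: the raw replacement error $\log P_n\Psi_n^{(2)}-\log P\Psi_n^{(2)}$ is only $O_p(n^{-1/2})\gg h_n$, so you must work with the difference of these errors across $\theta$ and $\theta_0$, which does localize. One small discrepancy: in the paper Assumption \ref{moment condition 2}(a) is used not for curvature of the criterion but to lower-bound the variance $\sigma_n^2\asymp h_n$ inside the maximal inequality; in your argument its role would indeed shift to nondegeneracy of the drift.
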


In view of Assumption \ref{kernel and smoothing 2}(c), the convergence rate of $\hat \psi_n$ can be arbitrarily close to $n^{-1}$ up to a logarithmic factor. Although our basic proof strategy follows that of \cite{seo2007smoothed}, their rate is not faster than $n^{-3/4}$. The superiority of our rate stems from the utilization of a maximal inequality (inequality (2.5), in \cite{gine2001consistency}), in approximating the sample score function by its population counterpart. Refer to the proof of Lemma \ref{lem_roc} for detail. We note that \cite{mukherjee2020asymptotic} use the same maximal inequality and derive the same rate for a different model, though their proof strategy is distinct from that of ours.

Subsequently, we show the asymptotic normality of the score function \\ $n^{1/2} \nabla_{\xi}  l^*_{n} (\theta_0)$ and the convergence in the probability of a matrix $\int_{0}^1 \nabla_{\xi \xi'} l^*_{n} (\theta_0 + t(\hat{\theta}_n - \theta_0)) dt$ in (\ref{expansion xi}). 
For preparation, define $\phi (\theta)_i := (Z_i', U_i' 1\{ V_i + X_i' \psi \geq 0 \})'$, \\ $\phi^{(n)} (\theta)_i := (Z_i', U_i' \mc K((V_i + X_i'\psi)/h_n))'$, $\pi^{(0)} (t) := \mathbb E[Y_i(t) e^{\eta_0 (W_i, \theta_0)}]$, \\ $\pi^{(1)} (t) := \mathbb E[Y_i(t) \phi (\theta_0) e^{\eta_0 (W_i, \theta_0)}]$, and $\pi^{(2)} (t) := \mathbb E[Y_i(t) \phi(\theta_0)_i \phi(\theta_0)_i' e^{\eta_0 (W_i, \theta_0)} ]$. Then, we set the following assumption. 
\begin{assumption}\label{pd variance matrix}
	$\mc I$ := $\int_{0}^{\tau} ( \pi^{(2)} (t) - \pi^{(1)} (t) \pi^{(1)} (t)' / \pi^{(0)} (t) ) d\Lambda_0 (t)$ is positive definite.
\end{assumption}

This assumption is standard in the Cox model. See, for example, Theorem 8.4.1 of \cite{fleming2011counting}. The proof of Proposition \ref{convergence matrices} in the following indicates that $\mc I$ is the probability limit of  $- \nabla_{\xi \xi'} l^{\ast}_n (\widetilde \theta_n)$ for any consistent estimator $\widetilde \theta_n$ of $\theta_0$. 
Since $- \nabla_{\xi \xi'} l^*_{n} (\widetilde \theta_n) $ is positive semi-definite (e.g., see page 304 in \cite{fleming2011counting}), its probability limit $\mathcal I$ is at least positive semi-definite.
In this sense, Assumption \ref{pd variance matrix} strengthens this positive semi-definiteness to positive definiteness.

The following propositions show the asymptotic normality of a scaled score function and the probability convergence of the Hessian matrix.

\begin{proposition}\label{a score}
	Assume Assumptions \ref{a survival time and a censoring time}-\ref{pd variance matrix} hold. Then,
		$n^{1/2} \nabla_{\xi}  l^*_{n} (\theta_0) \rightarrow_{d} N \left(0, \mc I \right)$.
\end{proposition}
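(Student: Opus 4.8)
The plan is to rewrite $n^{1/2}\nabla_{\xi} l^*_n(\theta_0)$ as a Cox-type partial-likelihood score in counting-process form, split it into a martingale integral and a compensator, apply the martingale central limit theorem to the former, and show that the latter---an artifact of smoothing---is $o_p(1)$ thanks to the fast rate $n^{1/2}h_n \rightarrow 0$ in Assumption \ref{kernel and smoothing 2}(c).

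First I would differentiate (\ref{smoothed log likelihood}) in $\xi$. Writing $N_i(t) := \delta_i 1\{T_i \leq t\}$, $s^{(0)}_n(t) := n^{-1}\sum_{j} Y_j(t) e^{\eta_n(W_j, \theta_0)}$, and $s^{(1)}_n(t) := n^{-1}\sum_{j} Y_j(t)\phi^{(n)}(\theta_0)_j e^{\eta_n(W_j, \theta_0)}$, the chain rule gives
\begin{equation*}
n^{1/2}\nabla_{\xi} l^*_n(\theta_0) = n^{-1/2}\sum_{i=1}^{n}\int_0^{\tau}\left[\phi^{(n)}(\theta_0)_i - \frac{s^{(1)}_n(t)}{s^{(0)}_n(t)}\right] dN_i(t),
\end{equation*}
where restricting the integral to $[0,\tau]$ is justified by Assumption \ref{a survival time and a censoring time}(b), which forces observed events to occur before $\tau$. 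Let $M_i(t) := N_i(t) - \int_0^t Y_i(u)e^{\eta_0(W_i,\theta_0)}d\Lambda_0(u)$ denote the counting-process martingale associated with model (\ref{model}). Substituting $dN_i = dM_i + Y_i e^{\eta_0(W_i,\theta_0)}d\Lambda_0$ splits the score into a martingale part $A_n$ and a compensator part $B_n$.

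For $A_n = n^{-1/2}\sum_i\int_0^{\tau}[\phi^{(n)}(\theta_0)_i - s^{(1)}_n/s^{(0)}_n]\,dM_i$, I would invoke the martingale central limit theorem (see \cite{FH}). Its predictable quadratic variation is $n^{-1}\sum_i\int_0^{\tau}[\phi^{(n)}(\theta_0)_i - s^{(1)}_n/s^{(0)}_n]^{\otimes 2}Y_i(t)e^{\eta_0(W_i,\theta_0)}d\Lambda_0(t)$. Since the covariates are bounded (Assumption \ref{covariates assumption}(a)) and $\mc K$ is bounded, a uniform law of large numbers gives $\sup_{t\le\tau}|s^{(k)}_n(t) - \pi^{(k)}(t)| \rightarrow 0$ a.s.; here $\mc K((V+X'\psi_0)/h_n)$ converges to $1\{V+X'\psi_0\ge 0\}$ as $h_n\rightarrow 0$, so the smoothed sums inherit the limits $\pi^{(k)}$ of their unsmoothed counterparts. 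Because $s^{(0)}_n(t)$ stays bounded below by $\pi^{(0)}(\tau)>0$ (Assumption \ref{a survival time and a censoring time}(b)), expanding the square and passing to the limit shows the predictable variation converges in probability to $\int_0^{\tau}[\pi^{(2)}(t) - \pi^{(1)}(t)\pi^{(1)}(t)'/\pi^{(0)}(t)]d\Lambda_0(t) = \mc I$, which is positive definite by Assumption \ref{pd variance matrix}. The jumps of $A_n$ are $O(n^{-1/2})$ uniformly, so the Lindeberg condition holds trivially, and hence $A_n\rightarrow_d N(0,\mc I)$.

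The crux is to show $B_n = o_p(1)$. Summing over $i$, the compensator part equals $n^{1/2}\int_0^{\tau}[s^{(1),0}_n(t) - (s^{(1)}_n(t)/s^{(0)}_n(t))\,s^{(0),0}_n(t)]d\Lambda_0(t)$, where $s^{(k),0}_n$ denotes the same average as $s^{(k)}_n$ but with $e^{\eta_n}$ replaced by the true $e^{\eta_0(\cdot,\theta_0)}$. Were there no smoothing this integrand would be identically zero---this is precisely the martingale property of the unsmoothed Cox score---so $B_n$ is driven entirely by $e^{\eta_n(W,\theta_0)}-e^{\eta_0(W,\theta_0)}$, which as a function of $S:=V+X'\psi_0$ is concentrated in an $O(h_n)$-neighbourhood of the origin. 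A change of variables $u=s/h_n$ combined with the density bounds of Assumption \ref{density} and the kernel integrability of Assumption \ref{kernel and smoothing 2}(b) shows that the integrand has mean $O(h_n)$ and variance $O(h_n/n)$; multiplying by $n^{1/2}$ then yields a mean of order $n^{1/2}h_n=o(1)$ and a fluctuation with variance $O(h_n)\rightarrow 0$, so $B_n=o_p(1)$. I expect this smoothing-bias bound---rather than the otherwise standard martingale central limit theorem---to be the main obstacle, and it is here that $n^{1/2}h_n\rightarrow 0$ is indispensable. Combining $A_n\rightarrow_d N(0,\mc I)$ with $B_n=o_p(1)$ via Slutsky's theorem yields $n^{1/2}\nabla_{\xi} l^*_n(\theta_0)\rightarrow_d N(0,\mc I)$.
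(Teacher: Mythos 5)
Your argument is correct, but it takes a genuinely different route from the paper's. The paper does not perform a martingale/compensator split on the smoothed score at all: it shows directly that the smoothed score equals the \emph{unsmoothed} Cox partial-likelihood score plus an $o_p(1)$ remainder. Concretely, Lemma \ref{error indicator kernel} gives $n^{-1/2}\sum_{i=1}^n|\mc K((V_i+X_i'\psi_0)/h_n)-1\{V_i+X_i'\psi_0\ge 0\}|=o_p(1)$ (this is where $n^{1/2}h_n\to 0$ enters), which lets one replace $\phi^{(n)}_i$ by $\phi_i$ and $\widetilde S^{(1)}_n/\widetilde S^{(0)}_n$ by $S^{(1)}_n/S^{(0)}_n$ at cost $o_p(1)$; the resulting unsmoothed score is then dispatched by citing Theorems 8.2.1 and 8.4.1 of \cite{FH}. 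You instead keep the smoothing, split $dN_i=dM_i+Y_ie^{\eta_0(W_i,\theta_0)}d\Lambda_0$, prove the martingale CLT with smoothed integrands, and bound the now non-vanishing compensator $B_n$. Both proofs rest on the same quantitative ingredient --- $\mathbb E|\mc K(S/h_n)-1\{S\ge 0\}|=O(h_n)$ via the change of variables $s=h_nu$ and Assumption \ref{density}, so that $n^{1/2}h_n\to 0$ kills the smoothing bias --- but yours is more self-contained (it re-derives the Andersen--Gill argument rather than reducing to it) at the price of having to verify convergence of the predictable quadratic variation with smoothed ingredients and to control the stochastic fluctuation of $B_n$ (your $O(h_n/n)$ variance bound, which does go through since $|\mc K-1\{\cdot\ge 0\}|^2\le \mc C|\mc K-1\{\cdot\ge 0\}|$), whereas the paper's version is shorter because the compensator of the unsmoothed score vanishes identically. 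One detail you should make explicit: the finiteness of $\int|1\{u\ge 0\}-\mc K(u)|\,du$, which your mean bound for $B_n$ needs, is not an immediate restatement of Assumption \ref{kernel and smoothing 2}(b); the paper derives it by integration by parts together with L'H\^opital's rule, using $\lim_{s\to\pm\infty}s^2\mc K'(s)=0$ and $\int|s||\mc K'(s)|\,ds<\infty$.
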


\begin{proposition}\label{convergence matrices}
Assume Assumptions \ref{a survival time and a censoring time}-\ref{moment condition 2} hold. Then, $\nabla_{\xi \xi'} l^*_{n} (\Bar \theta_n)$ converges to $- \mathcal I$ in probability.
\end{proposition}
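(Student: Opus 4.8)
The plan is to reduce $\nabla_{\xi\xi'} l^*_n$ to the classical Cox information form and then pass to the limit in two decoupled steps: a uniform law of large numbers for the smoothed at-risk averages, and an outer (event-indexed) law of large numbers. First I would note that $\xi=(\beta,\gamma)$ enters $\eta_n(W,\theta)=Z'\beta+U'\gamma\,\mc K((V+X'\psi)/h_n)$ linearly, so $\nabla_{\xi\xi'}\eta_n\equiv 0$ and all curvature comes from the log-denominator. Setting $S^{(0)}_n(\theta,t):=n^{-1}\sum_j Y_j(t)e^{\eta_n(W_j,\theta)}$, $S^{(1)}_n(\theta,t):=n^{-1}\sum_j Y_j(t)\phi^{(n)}(\theta)_j e^{\eta_n(W_j,\theta)}$ and $S^{(2)}_n(\theta,t):=n^{-1}\sum_j Y_j(t)\phi^{(n)}(\theta)_j\phi^{(n)}(\theta)_j' e^{\eta_n(W_j,\theta)}$, a direct differentiation gives
\[
\nabla_{\xi\xi'} l^*_n(\theta) = -\frac{1}{n}\sum_{i=1}^n \delta_i\, V_n(\theta,T_i), \qquad V_n(\theta,t) := \frac{S^{(2)}_n(\theta,t)}{S^{(0)}_n(\theta,t)} - \left( \frac{S^{(1)}_n(\theta,t)}{S^{(0)}_n(\theta,t)} \right)^{\!\otimes 2}.
\]
Because $\bar\theta_n$ lies on the segment joining $\hat\theta_n$ and $\theta_0$ it is consistent by Proposition \ref{consistency}, and because it may vary row by row I would prove a statement uniform in $\theta$: namely $\sup_{t\le\tau}\|V_n(\bar\theta_n,t)-v_0(t)\|\to_p 0$ (read entrywise), where $v_0(t):=\pi^{(2)}(t)/\pi^{(0)}(t)-(\pi^{(1)}(t)/\pi^{(0)}(t))^{\otimes 2}$.

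The crux is the uniform claim $\sup_{\theta\in N,\,t\le\tau}\|S^{(k)}_n(\theta,t)-\pi^{(k)}_\theta(t)\|\to_p 0$ for $k=0,1,2$ on a neighborhood $N$ of $\theta_0$, where $\pi^{(k)}_\theta(t)$ denotes the \emph{unsmoothed} population moment (so $\pi^{(k)}_{\theta_0}=\pi^{(k)}$). I would split this into a Glivenko--Cantelli step and a mean smoothing-error step. For the former, the family $\{(w,t)\mapsto Y(t)\phi^{(n)}(\theta)\phi^{(n)}(\theta)'e^{\eta_n(w,\theta)}\}$ has a bounded envelope (covariates bounded by Assumption \ref{covariates assumption}(a), $\mc K$ bounded by Assumption \ref{kernel and smoothing}(a)), and since $\mc K$ is monotone composed with the linear index $V+X'\psi$, its subgraphs are half-spaces; hence the class is VC-subgraph with index bounded \emph{uniformly in} $n$, so it is a uniform GC class even though $h_n\to0$, and $\sup_{\theta,t}\|S^{(k)}_n-\mbb E S^{(k)}_n\|\to 0$ a.s. For the mean error, the difference between $\mc K((V+X'\psi)/h_n)$ and $1\{V+X'\psi\ge0\}$ concentrates near the index zero-set; conditioning on the remaining covariates, substituting $s=v+x'\psi$, and using $\sup f_{S|(T,Z,U,X)}<\infty$ (Assumption \ref{density}) bounds it by $\mc C h_n\int|\mc K(u)-1\{u\ge0\}|\,du$, and this last integral is finite by the moment/tail conditions in Assumption \ref{kernel and smoothing 2}(b); the bound is uniform in $(\theta,t)$. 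Finally, continuity of $\theta\mapsto\pi^{(k)}_\theta(t)$ at $\theta_0$ (uniformly in $t$) holds because $V$ has a conditional density given $X$ (Assumption \ref{covariates assumption}(b)), which integrates away the indicator's jump. Combining the three facts with $\bar\theta_n\to_p\theta_0$ yields the uniform claim.

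Since $\pi^{(0)}(t)\ge\pi^{(0)}(\tau)=\mbb E[Y(\tau)e^{\eta(W,\theta_0)}]>0$ (Assumption \ref{a survival time and a censoring time}(b),(c) and bounded covariates), the denominators stay bounded away from zero with probability tending to one, so the uniform convergence of the $S^{(k)}_n$ transfers to $\sup_{t\le\tau}\|V_n(\bar\theta_n,t)-v_0(t)\|\to_p 0$. Writing, for each matrix entry, $-\nabla_{\xi\xi'} l^*_n(\bar\theta_n)=n^{-1}\sum_i\delta_i v_0(T_i)+n^{-1}\sum_i\delta_i\{V_n(\bar\theta_n,T_i)-v_0(T_i)\}$, the second term is $o_p(1)$ because it is bounded by $\sup_{t\le\tau}\|V_n(\bar\theta_n,t)-v_0(t)\|\cdot n^{-1}\sum_i\delta_i\le\sup_{t\le\tau}\|V_n(\bar\theta_n,t)-v_0(t)\|$. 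The first term converges by the ordinary LLN to $\mbb E[\delta_1 v_0(T_1)]$, and a counting-process computation under the Cox intensity with independent censoring (Assumption \ref{a survival time and a censoring time}(c)) gives $\mbb E[dN_1(t)]=\pi^{(0)}(t)\,d\Lambda_0(t)$, so
\[
\mbb E[\delta_1 v_0(T_1)] = \int_0^\tau v_0(t)\,\pi^{(0)}(t)\,d\Lambda_0(t) = \int_0^\tau\left(\pi^{(2)}(t)-\frac{\pi^{(1)}(t)\pi^{(1)}(t)'}{\pi^{(0)}(t)}\right)d\Lambda_0(t)=\mc I.
\]
Hence $\nabla_{\xi\xi'} l^*_n(\bar\theta_n)\to_p-\mc I$.

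The main obstacle is the middle paragraph: controlling the smoothing error \emph{uniformly} while the bandwidth shrinks. One must show simultaneously that replacing the kernel by the indicator costs only $O(h_n)$ in expectation uniformly in $(\theta,t)$ and that the stochastic fluctuations of an increasingly peaked class vanish uniformly. Both are handled by exploiting the monotone/VC structure of $\mc K$ and the integrability of $\mc K-1\{\cdot\ge0\}$ rather than any smoothness that degrades as $h_n\to0$; this is precisely what allows mere consistency of $\bar\theta_n$ to suffice for the $\xi\xi'$-block, without invoking the sharp $O_p(h_n)$ rate of Proposition \ref{roc psi}.
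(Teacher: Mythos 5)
Your proof is correct, and it reaches the conclusion by a somewhat different route than the paper. The opening step is identical: both write $\nabla_{\xi\xi'}l^*_n(\theta)$ as minus a $\delta_i$-weighted average of the covariance-type matrices built from $\widetilde S^{(0)}_n,\widetilde S^{(1)}_n,\widetilde S^{(2)}_n$, both handle the row-by-row ambiguity of $\Bar\theta_n$ by proving a statement uniform in $\theta$, and both need only consistency of $\Bar\theta_n$ rather than the $O_p(h_n)$ rate. The divergence is in the order of the limiting operations and in how $\mc I$ is identified. The paper stays at the empirical level throughout: it replaces the kernel by the indicator uniformly over $\Theta$ (via Lemma 4 of Horowitz), replaces $\Bar\psi_n$ by $\psi_0$ (via its Lemma \ref{error two indicator}), and then invokes Theorems 8.2.2 and 8.4.1 of Fleming--Harrington as a black box to send the resulting standard Cox Hessian to $-\mc I$. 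You instead pass the inner averages to their unsmoothed \emph{population} counterparts $\pi^{(k)}_\theta(t)$ uniformly in $(\theta,t)$ --- combining a bandwidth-uniform VC/Glivenko--Cantelli argument (the same device the paper deploys in Lemma \ref{lem_roc}(c), where monotonicity of $\mc K$ composed with the linear index gives covering bounds independent of $h$) with an $O(h_n)$ smoothing-bias bound obtained from the change of variables and $\sup f_{S|R}<\infty$, with $\int|\mc K(u)-1\{u\ge0\}|\,du<\infty$ established exactly as in Lemma \ref{error indicator kernel} --- then use continuity of $\theta\mapsto\pi^{(k)}_\theta$ at $\theta_0$, an ordinary law of large numbers for the outer sum, and the martingale identity $\mbb E[dN(t)]=\pi^{(0)}(t)\,d\Lambda_0(t)$ (Lemma \ref{martingale property}(b)) to compute the limit explicitly as $\mc I$. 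Your version is more self-contained, re-deriving the classical consistency of the Cox information matrix rather than citing it, at the cost of a longer uniform-convergence argument; the paper's version is shorter by delegating that last step to the standard Cox theory. I see no gap in your argument.
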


Combining these propositions gives the asymptotic normality of the MSPLE for the regression parameter $\xi$.
\begin{proposition}\label{asy msple xi}
Assume Assumptions \ref{a survival time and a censoring time}-\ref{pd variance matrix} hold. Then,
$n^{1/2} (\hat \xi_n - \xi_0) \rightarrow_d N \left(0, \mc I^{-1} \right)$.
\end{proposition}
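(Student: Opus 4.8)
The plan is to solve the linearized first-order condition (\ref{expansion xi 0}) for $n^{1/2}(\hat\xi_n - \xi_0)$ and pass to the limit via Slutsky's theorem, feeding in the three preceding propositions. Consistency (Proposition \ref{consistency}) guarantees that $\hat\theta_n$, and hence the intermediate point $\bar\theta_n$ lying on the segment between $\hat\theta_n$ and $\theta_0$, falls in an arbitrarily small neighborhood of $\theta_0$ with probability tending to one, so that the coordinatewise Taylor expansion underlying (\ref{expansion xi 0}) is legitimate for large $n$.

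First I would dispose of the cross term $\nabla_{\xi \psi'} l^*_{n} (\bar\theta_n)\, n^{1/2}(\hat\psi_n - \psi_0)$ on the right-hand side of (\ref{expansion xi 0}). Proposition \ref{roc psi} gives $\hat\psi_n - \psi_0 = O_p(h_n)$, whence $n^{1/2}(\hat\psi_n - \psi_0) = O_p(n^{1/2} h_n) = o_p(1)$ by the condition $n^{1/2} h_n \rightarrow 0$ in Assumption \ref{kernel and smoothing 2}(c). It therefore suffices to show that $\nabla_{\xi \psi'} l^*_{n} (\bar\theta_n) = O_p(1)$, after which the product is $o_p(1)$.

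Establishing this $O_p(1)$ bound is, I expect, the main obstacle. Differentiating the smoothed kernel with respect to $\psi$ produces a factor $h_n^{-1}\mc K'((V + X'\psi)/h_n)$, so a crude bound on the cross-derivative would only give $O_p(h_n^{-1})$, which combined with the cross term would leave a residual of order $n^{1/2}$ rather than $o_p(1)$. The key is that $h_n^{-1}\mc K'(\cdot/h_n)$ acts as an approximate identity: after a change of variables $u = (v + X'\psi)/h_n$ and using $\int \mc K'(u)\,du = 1$, the average of $h_n^{-1}\mc K'((V+X'\psi)/h_n)$ against the bounded, smooth conditional density of $V$ (Assumptions \ref{covariates assumption}(b) and \ref{density}) converges to a finite limit rather than diverging. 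Concretely, I would prove via a uniform law of large numbers—exploiting the bounded-variation and integrability properties of $\mc K'$ in Assumptions \ref{kernel and smoothing 2}(a)--(b) together with the bandwidth condition $(\log n)/(n h_n) \rightarrow 0$—that $\nabla_{\xi \psi'} l^*_{n}(\theta)$ converges in probability, uniformly over a neighborhood of $\theta_0$, to a finite matrix; the relevant uniform convergence would be relegated to an auxiliary lemma in Appendix A.2. Since $\bar\theta_n \rightarrow_p \theta_0$, this yields $\nabla_{\xi \psi'} l^*_{n}(\bar\theta_n) = O_p(1)$ and hence $\nabla_{\xi \psi'} l^*_{n}(\bar\theta_n)\, n^{1/2}(\hat\psi_n - \psi_0) = o_p(1)$.

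It remains to combine the pieces. By Proposition \ref{convergence matrices}, $\nabla_{\xi \xi'} l^*_{n}(\bar\theta_n) \rightarrow_p -\mc I$, and since $\mc I$ is positive definite (Assumption \ref{pd variance matrix}), this matrix is invertible with probability approaching one and $[\nabla_{\xi \xi'} l^*_{n}(\bar\theta_n)]^{-1} \rightarrow_p -\mc I^{-1}$. Solving (\ref{expansion xi 0}) and absorbing the $o_p(1)$ cross term gives
\[
	n^{1/2}(\hat\xi_n - \xi_0) = -[\nabla_{\xi \xi'} l^*_{n}(\bar\theta_n)]^{-1}\, n^{1/2}\nabla_{\xi} l^*_{n}(\theta_0) + o_p(1),
\]
where the remainder is $o_p(1)$ because $[\nabla_{\xi \xi'} l^*_{n}(\bar\theta_n)]^{-1} = O_p(1)$. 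Proposition \ref{a score} supplies $n^{1/2}\nabla_{\xi} l^*_{n}(\theta_0) \rightarrow_d N(0, \mc I)$, so Slutsky's theorem yields $n^{1/2}(\hat\xi_n - \xi_0) \rightarrow_d \mc I^{-1} N(0, \mc I)$. The covariance of this limit is $\mc I^{-1} \mc I \mc I^{-1} = \mc I^{-1}$ by symmetry of $\mc I$, which gives the claimed $N(0, \mc I^{-1})$ distribution.
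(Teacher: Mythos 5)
Your proposal is correct and follows essentially the same route as the paper: solve the linearized first-order condition, kill the cross term by combining $\hat\psi_n-\psi_0=O_p(h_n)$ with $n^{1/2}h_n\to 0$ and an $O_p(1)$ bound on $\nabla_{\xi\psi'}l^*_n(\bar\theta_n)$, then apply Slutsky with Propositions \ref{a score} and \ref{convergence matrices}. In particular you correctly identify the one nontrivial step — that the naive $O_p(h_n^{-1})$ bound on the cross-derivative is insufficient and that the change of variables $u=(v+x'\psi)/h_n$ together with a uniform law of large numbers for $h_n^{-1}\mc K'((V+X'\psi)/h_n)$ rescues it — which is exactly how the paper handles it (via the proof of Lemma \ref{lem_roc}(a) and the bound (\ref{kernel inequality})).
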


It must be noted that our estimator $\hat \xi_n$ is adaptive in the sense that its asymptotic distribution is the same as the case when $\psi_0$ is known and the model reduces to the usual Cox model. In other words, the estimation of $\xi$ is not affected by that of $\psi$ asymptotically. For the asymptotic distribution for the Cox model, see, for example, Chapter 8 of \cite{fleming2011counting}. This adaptability also appears in the asymptotic distribution for the change-point Cox model considered in \cite{pons2003estimation}. In this respect, Proposition \ref{asy msple xi} can be viewed as a multivariate extension of Theorem 5 in \cite{pons2003estimation} to our general model.

\section{Monte Carlo Simulation}

In this section, we conduct a simulation study to assess the finite sample performance of the MSPLE. We use the Julia language for the simulation, with a random seed set to 20220209 for each sample size and each parameter value specified below.

We generate 1,000 replications of $i.i.d.$ random variables $\{ T_i, C_i, Z_i, V_i, X_i \}_{i = 1}^n$ of sample sizes $n = 200, 500$, and $1000$ from the following change-plane Cox model:
\[
	\lambda(T^{\circ} | Z, V, X) = \lambda \exp\{Z \beta + Z \gamma 1\{V + \psi_1 + X \psi_2 \geq 0 \} \},
\]
with $\lambda = 0.1$, $\beta = 0.8$, $\psi_1 = 0.4$, and $\psi_2 = 0.3$. For the distribution of covariates, we set $Z \sim Bernoulli(0.5)$, $V \sim N(-2, 4), X \sim U(-0.5, 0.5)$, and $C = 15$, and they are mutually independent. 
Following \cite{seo2005working}, we try different values of $\gamma \in \{ 0.25, 0.5, 1 \}$ to evaluate the effect of the magnitude of this value on the estimation.
To calculate the MSPLE, we must maximize $l^{\ast}_n (\theta)$ in (\ref{smoothed log likelihood}). We set $\mc K(\cdot)$ to the c.d.f. of the standard normal distribution and $h_n = (\log n)^2 / n$. We also adopt the following heuristic algorithm for the maximization:

\begin{enumerate}
\item[1.] Fix an initial value of $\hat \psi$.
\item[2.] With $\hat \psi$ fixed, maximize $l^{\ast}_n (\xi, \hat \psi)$ with respect to $\xi$ and update $\hat \xi$ by setting $\hat \xi$ to the maximizer.
\item[3.] With $\hat \xi$ fixed, maximize $l^{\ast}_n (\hat \xi, \psi)$ with respect to $\psi$ and update $\hat \psi$ by setting $\hat \psi$ to the maximizer.
\item[4.] Iterate steps 2 and 3 until the convergence.
\end{enumerate}
A similar iterative algorithm is used in \cite{li2021multithreshold}.
We implement this algorithm multiple times with initial values $(\widehat \psi_1, \widehat \psi_2)$ in $[-1, 1] \times [-1, 1]$, in step 1; out of the candidates, we adopt $\hat \theta$ that maximizes $l^{\ast}_n (\theta)$ as the MSPLE. It must be noted that, since $l^{\ast}_n (\xi, \hat \psi)$  is concave with respect to $\xi$, for fixed $\hat \psi$, we apply the standard Newton-Raphson method in step 2. 
To deal with non-concavity of $l^{\ast}_n (\hat \xi, \psi)$ with respect to $\psi$, for fixed $\hat \xi$, we use a gradient descent method for step 3.

For evaluating the resulting MSPLE of $(\beta, \gamma)$, we construct the asymptotic 95\% confidence intervals based on the asymptotic distribution obtained in Proposition \ref{asy msple xi}, with a variance matrix $\mc I^{-1}$ replaced with its estimator $- \nabla_{\xi \xi'} l^{\ast}_n (\hat \theta_n)^{-1}$. By the direct application of the proof of Proposition \ref{convergence matrices}, $- \nabla_{\xi \xi'} l^{\ast}_n (\hat \theta_n)^{-1}$ is consistent for $\mc I^{-1}$. 
However, we assess the performance of $\hat \psi_n$ by the mean classification error (MCE). We calculate the classification error as $n^{-1} \sum_{i = 1}^n | 1\{ V_i + \psi_1 + X_i \psi_2 \geq 0 \} - 1\{ V_i + \widehat \psi_1 + X_i \widehat \psi_2 \geq 0\}|$, for each replication. This index evaluates the in-sample accuracy of subgroup prediction.

\begin{table}[h]\caption{Coverage probability and MCE} \label{table 1} \medskip 
	\centering 
	\begin{tabular}{c|ccc|ccc|ccc}
		\hline
		\multicolumn{1}{c|}{} & \multicolumn{3}{c|}{$\gamma = 0.25$} & \multicolumn{3}{c|}{$\gamma = 0.5$} & \multicolumn{3}{c}{$\gamma = 1.0$} \\
		\hline
		& $\beta$ & $\gamma$  & MCE & $\beta$ & $\gamma$  & MCE & $\beta$ & $\gamma$ & MCE \\
		\hline
		$n = 200$ & 0.909 & 0.709 & 0.121 & 0.933 & 0.859 & 0.097 & 0.938 & 0.912 & 0.055 \\
		$n = 500$ & 0.926 & 0.830 & 0.099 & 0.946 & 0.905 & 0.064 & 0.947 & 0.935 & 0.024 \\
		$n = 1000$ & 0.943 & 0.883 & 0.071 & 0.951 & 0.926 & 0.038 & 0.953 & 0.946 & 0.012 \\
		\hline
	\end{tabular} \\
	\begin{flushleft}
	\small
	Notes: The columns ``$\beta$" and ``$\gamma$" report the frequency at which the 95\% confidence interval contains the true value of respective parameters. 
	In column ``MCE", we average over all the classification errors of the replications.
	\end{flushleft}
\end{table}

Table \ref{table 1} reports the result of the simulation.
Overall, the performance improves with the sample size and with the value of $\gamma$, which is similar to the result of \cite{seo2005working}.
Still, there is a remarkable contrast between the result of coverage probability for $\beta$ and $\gamma$.
The confidence interval for $\beta$ has a coverage probability more than 0.9 even with a moderate sample size $n = 200$ for all the values of $\gamma$.
However, the confidence interval for $\gamma$ performs poorly when $\gamma$ is small, which indicates that subgroups need to be well-separated for the inference to work properly.
As to the mean classification error, the estimated model predicts the subgroup membership correctly nearly 90\% of the time even in the worst case. 

Additionally, we evaluate the impact of the choice of a kernel $\mathcal K (\cdot)$ and the tuning parameter $h_n$ for a small sample size.
The simulation setting is the same as that for Table 1 with $\gamma = 1$ and $n = 200$. We try two kernel functions, $\Phi (x):$ the c.d.f. of standard normal distribution and 
$\text{Logit} (x) := e^{x} / (1 + e^{x})$, and different values of the tuning parameter:
 \[ h_n \in \{ (\log n)^2 / n^{1/2}, (\log n)^2 / n^{2/3}, (\log n)^2 / n, (\log n)^2 / n^{4/3}, (\log n)^2 / n^{3/2}  \}. \]
We also assess the effect of smoothing itself. Namely, we estimate the model parameter with unsmoothed objective function (\ref{partial likelihood}) where the maximization of the objective function with respect to $\psi$ is conducted via grid search
with grid length $0.01$. 

\begin{table}[h]\caption{Effect of $\mathcal K$ and $h_n$} \label{table 2} \medskip 
	\centering 
	\begin{tabular}{c|ccc|ccc||ccc}
		\hline
		\multicolumn{1}{c|}{} & \multicolumn{3}{c|}{$\mathcal K = \Phi$} & \multicolumn{3}{c||}{$\mathcal K = \text{Logit}$} & \multicolumn{3}{c}{Unsmoothed} \\
		\hline
		& $\beta$ & $\gamma$  & MCE & $\beta$ & $\gamma$  & MCE & $\beta$ & $\gamma$ & MCE \\
		\hline
		$h_n = (\log n)^2 / n^{1/2}$ & 0.915 & 0.817 & 0.083 & 0.849 & 0.699 & 0.109 & 0.941 & 0.937 & 0.019 \\
		$h_n = (\log n)^2 / n^{2/3}$ & 0.938 & 0.899 & 0.061 & 0.923 & 0.866 & 0.069 & & & \\
		$h_n = (\log n)^2 / n$ & 0.938 & 0.912 & 0.055 & 0.940 & 0.915 & 0.055 & & & \\
		$h_n = (\log n)^2 / n^{4/3}$ & 0.942 & 0.918& 0.039& 0.943 & 0.923 & 0.035 & & & \\
		$h_n = (\log n)^2 / n^{3/2}$ & 0.940 & 0.925 & 0.033 & 0.080 & 0.081 & 0.035 & & & \\
		\hline
	\end{tabular} \\
	\begin{flushleft}
	\small
	Notes: We set $n = 200$ and $\gamma = 1$. The columns ``$\beta$" and ``$\gamma$" report the frequency at which the 95\% confidence interval contains the true value of respective parameters. 
	In column ``MCE", we average over all the classification errors of the replications.
	\end{flushleft}
\end{table}

As table \ref{table 2} suggests, the performance tends to improve as the value $h_n$ gets smaller. However, when $\mathcal K = \text{Logit}$ and $h_n = (\log n)^2 / n^{3/2}$, our algorithm fails to calculate the maximizer properly.
This is due to the fact that matrix operation becomes numerically unstable. Hence, it is not advised to set the value of $h_n$ too small.
With regard to the kernels, different choices of $\mathcal K$ do not affect the performance significantly except for the extreme cases with $h_n = (\log n)^2 / n^{1/2}$ and $h_n = (\log n)^2 / n^{3/2}$.
Lastly, estimation with the unsmoothed objective function moderately refines the performance of the coverage probability for $\gamma$ and MCE.
However, as indicated in the introduction section, this improvement comes at a cost in terms of computational time. In fact, in our computing environment, Apple M1 CPU (8 cores) and 16 GB RAM, it 
takes 246.797 seconds to compute an estimate with the nonsmoothed objective function whilst the time reduces to 4.857 seconds using the smoothed one with $\mathcal K = \Psi$ and $h_n = (\log n)^2 / n$.

\section{Real-World Data Analysis}
We apply our proposed method to real-world data from AIDS Clinical Trials Group Study 175 (ACTG175).
In this study, HIV-infected patients are randomized into 4 different medications: zidovudine only, zidovudine plus didanosine, zidovudine plus zalcitabine, and didanosine only.
Refer to \cite{hammer1996trial} for the detail of the experiment. The data, ``ACTG175", are available in R package \textbf{speff2trial}.
Following \cite{lu2013variable} and \cite{kang2017subgroup}, our analysis focuses on two groups: the one receiving zidovudine only and the other receiving either of the other three treatments.
We thus label the former group as the control group and the latter as the treatment group. 
A survival time is a time up to either of the following events: progressing to AIDS, having a larger than 50\% decline in the CD4 count, and death.
We include age and homosexual activity (0 = no, 1 = yes), denoted by "homo", as covariates used for subgroup classification, as in 
\cite{lu2013variable} and \cite{kang2017subgroup}. Note that we standardize the age variable so that the support of this covariate is more likely to cover a possible variation in $X'\psi$ for the purpose of identification, as discussed in an argument regarding Assumption \ref{covariates assumption}(b).
\cite{kang2017subgroup} use the same dataset to test the null hypothesis that there exists no subgroup, characterized by the two covariates above, with different treatment effects. 
While \cite{kang2017subgroup} reject the null hypothesis of no subgroup,
they do not prove the asymptotic distribution of their estimator.  Our analysis here provides the standard error for estimates of the treatment effects and, accordingly, 95\% confidence intervals for those parameters based on the asymptotic distribution derived in Proposition \ref{asy msple xi}.

\begin{table}[!htb]
	\caption{Estimates of the model $(a)$ for ACTG175}
	\label{table3} \medskip
	\centering 
	\begin{tabular}{c|ccc}
		\hline
		& Estimate & S.E. & CI \\ \hline \hline  
		$\gamma$&  -0.556 & 0.091 & [-0.734, -0.378]\\ 
		$\psi_1$& 1.477 & \\  
		$\psi_2$& -0.298 & \\ \hline 
	\end{tabular} \\
	\small
	Notes: The standard error is constructed from $- \nabla_{\gamma \gamma'} l^*_n (\widehat \theta_n)^{-1}$.
	The column ``CI'' reports 95\% confidence intervals for $\gamma$.
\end{table}

\begin{table}[!htb]
	\caption{Estimates of the model $(b)$ for ACTG175}
	\label{table4} \medskip
	\centering 
	\begin{tabular}{c|ccc}
		\hline
		& Estimate & S.E. & CI \\ \hline \hline
		$\beta$& -0.675 & 0.095 & [-0.862, -0.489]\\  
		$\gamma$&  0.342 & 0.145 & [0.056, 0.627]\\ 
		$\psi_1$& -1.930 & \\  
		$\psi_2$& 0.979 & \\ \hline 
	\end{tabular} \\
	\small
	Notes: The standard error is constructed from $- \nabla_{\xi \xi'} l^*_n (\widehat \theta_n)^{-1}$.
	The column ``CI'' reports 95\% confidence intervals for $\beta$ and $\gamma$.
\end{table}

Let $Z = \text{treatment}$, $V = \text{age}$, $X_1 = 1$ and  $X_2 = \text{homo}$. 
We consider the following two models: 
\[ (a) \ \eta (W, \theta) = Z\gamma 1 \{ V + \psi_1 + X \psi_2 \geq 0 \} \ \text{and} \ (b) \ \eta (W, \theta) = Z\beta + Z\gamma 1 \{ V + \psi_1 + X \psi_2 \geq 0 \}. \]
The model $(a)$ follows the setting of \cite{kang2017subgroup} where the model assumes away the treatment effect outside the subgroup.
In contrast, the model $(b)$ is more general in that it includes the main effect term $Z\beta$.
We perform the proposed estimation procedure with kernel $\mathcal K = \Phi$ and tuning parameter $h_n = (\log n)^2 / n$ as in Table \ref{table 1}.
Table \ref{table3} and \ref{table4} report the parameter estimates, the standard error of some of those estimates and 95\% confidence intervals for $\gamma$ and $(\beta, \gamma)$ for the model $(a)$ and $(b)$, respectively.
The estimated regression model for $(a)$ is $\eta(W, \widehat \theta_n) = -0.556 Z \times 1 \{ V - 0.298 X_2 \geq -1.477 \}$
with the estimate of $\gamma$ being significant at 5 \% level.
The result suggests that the treatment is likely to result in a longer survival time for older patients and in the absence of homosexual activity, which is consistent with the result in \cite{kang2017subgroup}. 
Meanwhile, the estimated regression model for $(b)$ is $\eta (W, \widehat \theta_n) = -0.675 Z + 0.342 Z \times 1 \{ V + 0.979 \geq 1.930 \}$ with the estimates of $\beta$ and $\gamma$ being significant at 5 \% level.
Overall, treatment lengthens survival time; however, aging and homosexual activities can weaken such effects.
Notably, the direction of the effect of aging on the treatment effect in the model $(b)$ is opposite to that in the model $(a)$ and, hence, that in \cite{kang2017subgroup}.
This difference signifies the importance of the model specification in the change-plane Cox model.

\section{Conclusion}
This study analyzes the asymptotic properties of the MSPLE for the change-plane Cox model. Specifically, we prove the asymptotic normality of the regression parameter and show that the convergence rate of the classification parameter can be arbitrarily close to $n^{-1}$ up to a logarithmic factor. We also conduct a simulation study to evaluate the finite sample performance of the MSPLE and apply the proposed method to real-world data.

The proposed method can be a useful tool for analyzing survival data when there exists a subgroup with a differentiated covariate effect.
Specifically, the indicator function combined with the estimated classification parameter provides clear criteria for judging whether certain individuals belong to the subgroup or not given their profiles.
In practice, however, it is suggested that data analysts test the existence of a subgroup in advance using the method such as \cite{wu2016subgroup} and \cite{kang2017subgroup}
because it is rare that the analysts are certain about the existence of a subgroup a priori.
Furthermore, the proposed method hinges on several important assumptions such as independent censoring and proportionality of the hazard.
The development of model diagnostics or more flexible models can be an interesting future research topic.

\clearpage
\appendix

\section{Proofs}

In what follows, we abbreviate $(T, Z, U, X)$ to $R$ for notational convenience. Define $P$ to be a distribution of $(\delta, T, W)$ and $P_n$ to be its empirical distribution. For a measurable function $f$, we write $P f := \int f dP$ and $P_n f := n^{-1} \sum_{i = 1}^n f(\delta_i, T_i, W_i)$. For each $\theta \in \Theta$, $n \in \mbb N$, and $t \in [0, \tau]$, we define the following measurable functions mapping $(\delta, T, W)$ into a real vector as
\begin{align*}
	&\Psi^{(1)}_n (\theta) := \delta (U' \gamma) (X / h_n) \mc K' ((V + X' \psi)/h_n), & &\Psi^{(2)}_n (\theta, t) := Y(t)e^{\eta_n (W, \theta)}, \\
	&\Psi^{(3)}_n (\theta, t) := Y(t)(U'\gamma)(X / h_n) \mc K' ((V + X' \psi)/h_n) e^{\eta_n (W, \theta)}, &
	&\Psi^{(4)} (\theta, t) := Y(t) e^{\eta (W, \theta)}.
\end{align*}
As we repeatedly use a combination of Assumptions \ref{a survival time and a censoring time}(b), \ref{the parameter space}(a) and (b), and \ref{covariates assumption}(a) to bound quantities below from zero and/or above from some finite positive constant, we combine them and refer to the combination as the boundedness assumption (BA) for brevity.

\subsection{Proofs of the Propositions}

\begin{proof}[Proof of Proposition \ref{identification}]
	First, we prove $(a)$ identification of $\theta_0$ and, subsequently,  show $(b)$ identification of $\lambda_0$.
	
	\underline{(a) Identification of $\theta_0$.}
	Observe that, for any $s \in (0, \tau)$,
	\begin{align}\label{identification theta 1}
		\mbb P(T \leq s, \delta = 1 | W) &= \mbb P(T^{\circ} \leq s, T^{\circ} \leq C | W) \notag \\
		&= \int_{t \leq s, t \leq c} F_{(T^{\circ}, C) | W} (dtdc) \notag \\
		&= \int_{0}^{s} \left( \int_{[t, \infty)} F_{C|W} (dc) \right) f_{T^{\circ} | W} (t | W) dt \notag\\
		&= \int_{0}^{s} \mbb P (C \geq t | W) \lambda_0 (t) e^{\eta (W, \theta_0)} e^{- e^{\eta (W, \theta_0)} \Lambda_0 (t)} dt,
	\end{align}
	where the third equality follows from Assumption \ref{a survival time and a censoring time}(a) and (d), and the last one follows from the definition of a hazard function.
	It follows from (\ref{identification theta 1}) and the fundamental theorem of calculus that, for all $t \in (0, \tau)$ except at most countable points,
	\begin{align}\label{identification theta 2}
		h(t) &=  \mbb P(C \geq t | W) \lambda_0 (t) e^{\eta (W, \theta_0)} e^{- e^{\eta (W, \theta_0)} \Lambda_0 (t)},
	\end{align}
	where $\Lambda (t)$ is defined in (\ref{Lambda}) and
	\[
		h(t) := \lim_{\ve \rightarrow 0} \frac{\mbb P(T \leq t + \ve, \delta = 1 | W) - \mbb P(T \leq t, \delta = 1 | W)}{\ve}.
	\]
	Meanwhile, for any $t \in (0, \tau)$,
	\begin{equation}\label{identification lambda 1}
		\mbb P(T \geq t | W) = \mbb P(T^{\circ} \geq t, C \geq t | W)
		= \mbb P(T^{\circ} \geq t | W)\mbb P(C \geq t | W)
		= e^{- e^{\eta(W, \theta_0)} \Lambda_0 (t)} \mbb P(C > t | W).
	\end{equation}
	By (\ref{identification theta 2}) and (\ref{identification lambda 1}), it holds that 
	\begin{align} 
		h(t) / \mbb P(T \geq t | W) = \lambda_0(t) e^{\eta (W, \theta_0)}
	\end{align}
	for all $t \in (0, \tau)$ except at most countable points, where the left-hand side is determined from the distribution of $(\delta, T, W)$.

	Now, assume that another parameter $(\lambda_1, \theta_1) \in \Pi \times \Theta$ generates the same distribution as the one under $(\theta_0, \lambda_0)$. Repeating the argument in the previous paragraph and by Assumption \ref{the parameter space}(b), $\lambda_0(t) e^{\eta (W, \theta_0)} = \lambda_1(t) e^{\eta (W, \theta_1)}$ for all $t \in (0, \tau) \ a.s.$ Pick $t' \in (0, \tau)$ such that $\lambda_0 (t')$ and $\lambda_1 (t')$ are both positive. Then, we have $\eta (W, \theta_0) - \eta (W, \theta_1) + c = 0 \ a.s.$ where $c := \log \lambda_0 (t') - \log \lambda_1 (t')$.
	We proceed to show $\mathbb E[\{\eta (W, \theta_0) - \eta (W, \theta_1) + c \}^{2}] > 0$ if $\theta_0 \neq \theta_1$, which is sufficient to show the identification of $\theta_0$. We follow the argument of the proof of Lemma 1 in \cite{seo2007smoothed}. Namely, we find a set $A$ with $\mbb P(A) > 0$ such that $\mathbb E[\{\eta (W, \theta_0) - \eta (W, \theta_1) + c \}^{2}, A] > 0$ where $\mathbb E[\widetilde X, A] := \int_{A} \tilde X(\omega) d \mbb P$ for a random variable $\widetilde X$. We split the proof into the following two cases.
	
	\underline{Case (i): $(\beta_0, \gamma_0) = (\beta_1, \gamma_1)$ and $\psi_0 \neq \psi_1$}. Define sets $A :=  \{ \omega \in \Omega: - X' \psi_0 \leq V < - X' \psi_1 \}$ and $B := \{ \omega \in \Omega: - X' \psi_1 \leq V < - X' \psi_0 \}$.
	It must be noted that $\mbb P (X'\psi_0 \neq X'\psi_1) > 0$ by Assumption \ref{covariates assumption}(c). It follows from Assumption \ref{covariates assumption}(b) that $\mbb P(A) > 0$ or $\mbb P(B) > 0$. Without loss of generality, suppose $\mbb P(A) > 0$. Then, observe that $\mathbb E[\{\eta (W, \theta_0) - \eta (W, \theta_1) + c \}^{2}, A \cup B] \geq \mathbb E[(U'\gamma_0 + c)^2, A] = \mathbb E[\mathbb E[(U'\gamma_0 + c)^2 | V, X], A] \geq \mathbb E[\gamma_0 Var(U | V, X) \gamma_0, A]$, which is positive by Assumptions \ref{the parameter space}(c) and \ref{covariates assumption}(d).
	
	\underline{Case (ii): $(\beta_0, \gamma_0) \neq (\beta_1, \gamma_1)$.} Define sets $A :=  \{ \omega \in \Omega: V + X'\psi_0 \geq 0, V + X'\psi_1 \geq 0 \}$ and $B := \{ \omega \in \Omega: V + X'\psi_0 < 0, V + X'\psi_1 < 0 \}.$
	Then, $\mbb P(A) > 0$ and $\mbb P(B) > 0$ by Assumption \ref{covariates assumption}(b).
	Similar to Case (i), we obtain $\mathbb E[\{\eta (W, \theta_0) - \eta (W, \theta_1) + c \}^{2}, A \cup B] = \mathbb E[\{Z'(\beta_0 - \beta_1) + U'(\gamma_0 - \gamma_1) + c \}^2, A] + \mathbb E[\{ Z'(\beta_0 - \beta_1) + c \}^2, B]$.
	It must be noted that when $\beta_0 = \beta_1$, $\mathbb E[\{Z'(\beta_0 - \beta_1) + U'(\gamma_0 - \gamma_1) + c \}^2, A] = \mathbb E[\{U'(\gamma_0 - \gamma_1) + c \}^2, A] \geq \mathbb E[(\gamma_0 - \gamma_1)' Var(U | V, X) (\gamma_0 - \gamma_1), A] > 0$ by Assumption \ref{covariates assumption}(d). 
	However, when $\beta_0 \neq \beta_1$, $\mathbb E[\{ Z'(\beta_0 - \beta_1) + c \}^2, B] \geq \mathbb E[(\beta_0 - \beta_1)'Var(Z | V, X)(\beta_0 - \beta_1), B] > 0$ again by Assumption \ref{covariates assumption}(d).
	Therefore, $\mathbb E[\{\eta (W, \theta_0) - \eta (W, \theta_1) + c \}^{2}, A \cup B] > 0$ holds.
	
	By cases (i) and (ii), the identification of $\theta_0$ follows.
	
	\underline{(b) Identification of $\lambda_0$.} 
	As observed in (a), $\lambda_0 (\cdot) e^{\eta (W, \theta_0)} = \lambda_1 (\cdot) e^{\eta (W, \theta_0)} \ a.s.,$ if $(\delta, T, W)$ has the same distribution under $(\theta_0, \lambda_0)$ and $(\theta_0, \lambda_1) \in \Theta \in \Pi$.
	Because $e^{\eta (W, \theta_0)} > 0$, $\lambda_0 (\cdot) = \lambda_1 (\cdot)$ follows.
	This completes the proof of Proposition \ref{identification}.
	\end{proof}

	\begin{proof}[Proof of Proposition \ref{consistency}]
		Our proof is based on the argument of \cite{van2002semiparametric}.
		Define $Q_n (\theta, \Lambda) := \prod_{i = 1}^n (e^{\eta_n (W_i, \theta)} \lambda_i)^{\delta_i} e^{- e^{\eta_n (W_i, \theta) \Lambda (T_i)}}$
		for $\theta \in \Theta$ and $\Lambda \in \Pi_n$, where $\Pi_n$ is a collection of all step functions that increase by non-negative $\lambda_i$ at $T_i$ for $\delta_i = 1$.
		Based on the arguments in section 5.3.2 and Example 9.13 of \cite{van2002semiparametric}, and strict increasingness of the logarithm, MSPLE $\widehat \theta_n$ has the following representation:
		$(\widehat \theta_n, \widehat \Lambda_{n, \widehat \theta_n}) := \argmax_{\theta \in \Theta, \Lambda \in \Pi_n} \log Q_n (\theta, \Lambda)$, where $\widehat \Lambda_{n, \widehat \theta_n}$ increases by $(n \widehat M_{n, \widehat \theta_n} (T_i))^{-1}$ at $T_i$ for $\delta_i = 1$ with $\widehat M_{n, \theta} (t)$ denoting $P_n \Psi^{(2)}_n (\theta, t)$.
		Further, define $M_{0, \theta} (t) := P \Psi^{(4)} (\theta, t)$, $\Lambda_{\theta} (t) := P \left( \delta 1 \{ T \leq t \} / M_{0, \theta} (T) \right)$, $\Lambda_{n, \theta} (t) := P_n \left( \delta 1 \{T \leq t \} / M_{0, \theta} (T) \right)$ and $\widehat \Lambda_{n, \theta} (t) := P_n \left( \delta 1 \{ T \leq t \} / \widehat M_{n, \theta} (T) \right)$.
		Letting  $\lambda_{\theta} (t) := \lambda_0 (t) M_{0, \theta_0} (t) / M_{0, \theta} (t)$, we can express $\Lambda_{\theta} (t)$ as follows:
		\begin{align}
			\Lambda_{\theta} (t) = P \int_{0}^{t} M_{0, \theta} (s)^{-1} d N(s)
			= P \int_{0}^{t} M_{0, \theta} (s)^{-1} Y(s) e^{\eta (W, \theta_0)} d \Lambda_0 (s) = \int_{0}^t \lambda_{\theta} (s) ds \notag,
		\end{align}	
		where $N(t) = 1 \{T \leq t, \delta = 1 \}$, and the second equality follows from Lemma \ref{martingale property}(b) and the third one from Fubini's Theorem.
		Letting $\widetilde \Lambda_n$ be a step function that increases by $(n M_{0, \theta_0} (T_i))^{-1}$ at $T_i$ for $\delta_i = 1$, it follows from $\log Q_n(\widehat \theta_n, \widehat \Lambda_{n, \widehat \theta_n}) \geq \log Q_n(\theta_0, \widetilde \Lambda_n)$ that 
		\begin{align} 
			P_n \delta (\eta_n (W, \widehat \theta_n) - \eta_n (W, \theta_0)) - P_n \left(e^{\eta_n (W, \widehat \theta_n)} \widehat \Lambda_{n, \widehat \theta_n} (T) - e^{\eta_n (W, \theta_0)} \widetilde \Lambda_n (T)\right) +
			 P_n \delta \log \frac{M_{0, \theta_0} (T)}{\widehat M_{n, \widehat \theta_n} (T)} \geq 0. \label{likelihood neq}
		\end{align}
		
		Suppose for a moment that the following $(a)$, $(b)$, $(c)$ and $(d)$ hold almost surely:
		\begin{align} 
			&(a) \quad \sup_{\theta \in \Theta} |P_n \delta \eta_n (W, \theta) - P \delta \eta(W, \theta)| \rightarrow 0,&
			&(b) \quad \sup_{\theta \in \Theta} |P_n e^{\eta_n (W, \theta)} \widehat \Lambda_{n, \theta} (T) - P e^{\eta (W, \theta)} \Lambda_{\theta} (T)| \rightarrow 0, \notag \\
			&(c) \quad | P_n e^{\eta_n (W, \theta_0)} \widetilde \Lambda_n (T) - P e^{\eta_0 (W, \theta_0)} \Lambda_0 (T) | \rightarrow 0, &
			&(d) \quad \sup_{\theta \in \Theta} \left| P_n \delta \log \frac{M_{0, \theta_0} (T)}{\widehat M_{n, \theta} (T)} - P \delta \log \frac{\lambda_{\theta} (T)}{\lambda_{0} (T)} \right| \rightarrow 0. \notag
		\end{align}
		Fix $\omega \in \{ \omega \in \Omega : (a), (b), (c) \ \text{and} \ (d) \ \text{hold, and} \ V(\omega) + X(\omega)'\psi \neq 0. \}$ and let $\theta_n := \widehat \theta_n (\omega)$.
		Take an arbitrary convergent subsequence $\theta_{n (k)}$ of $\theta_n$ and let $\theta_{\infty} \in \Theta$ denote its limit.
		Observe that $V (\omega) + X(\omega)'\psi \neq 0$ implies $\eta (W (\omega), \theta_{n(k)}) \rightarrow \eta (W (\omega), \theta_{\infty})$. 
		Based on this observation, in conjunction with BA, (\ref{likelihood neq}) and the dominated convergence theorem, 
		\begin{align} 
			P\delta (\eta (W, \theta_\infty) - \eta (W, \theta_0)) - P \left( e^{\eta (W, \theta_{\infty})} \Lambda_0 (T) - e^{\eta (W, \theta_0)} \Lambda_{\theta_{\infty}} (T) \right) P \delta \log \frac{\lambda_{\theta_{\infty}} (T)}{\lambda_0 (T)} \geq 0. \label{likelihood diff limit}
		\end{align}
		$\lambda_{\theta_{\infty}} (\cdot)$ is left-continuous and continuous on $[0, \tau]$ except at most countable points. Thus, repeating the proof of Proposition \ref{identification}, $(\theta_0, \lambda_0)$ and $(\theta_{\infty}, \lambda_{\theta_{\infty}})$ generates distinct distributions of $(\delta, T, W)$ if $\theta_0 \neq \theta_{\infty}$. 
		Because 
		$f(\delta, t, w|\theta, \lambda) \propto \left( \lambda (t) e^{\eta (w, \theta)} \right)^{\delta} e^{- e^{\eta (w, \theta)} \Lambda (t)}$ is a likelihood function of $(\delta, T, W)$ as on page 901 of \cite{wei2018change}, the information inequality and (\ref{likelihood diff limit}) implies $\theta_{\infty} = \theta_0$. It follows from the compactness of $\Theta$ that $\theta_n \rightarrow \theta_0$ as $n \rightarrow \infty$. Therefore, $\widehat \theta_n$ converges to $\theta_0$ $a.s.$
		because $\mathbb P (V + X'\psi \neq 0) = 1$ by Assumption \ref{covariates assumption}(b).

		It remains to prove $a.s.$ convergence in $(a)$, $(b)$, $(c)$ and $(d)$.
		For $(a)$, the triangle inequality gives
		\begin{align} 
			\sup_{\theta \in \Theta} |P_n \delta \eta_n (W, \theta) - P \delta \eta(W, \theta)| \leq &\sup_{\theta \in \Theta} |P_n \delta \eta_n (W, \theta) - P_n \delta \eta (W, \theta)| + \sup_{\theta \in \Theta} |P_n \delta \eta (W, \theta) - P \delta \eta(W, \theta)|. \notag 
		\end{align}
		From BA, the first term on the right-hand side is bounded by $\sup_{\theta \in \Theta} P_n |\mathcal K ((V + X' \psi) / h_n ) - 1 \{ V + X'\psi \geq 0 \}|$, which converges to 0 $a.s.$ by Lemma 4 of \cite{horowitz1992smoothed}.
		The second term on the right-hand side converges to 0 $a.s.$ because $\{ \delta \eta (W, \theta) : \theta \in \Theta \}$ is Glivenko-Cantelli (e.g. see page 902 of \cite{wei2018change}).

		We move to $(b)$. From the triangle inequality, $\sup_{\theta \in \Theta} |P_n e^{\eta_n (W, \theta)} \widehat \Lambda_{n, \theta} (T) - P e^{\eta (W, \theta)} \Lambda_{\theta} (T)|$ is bounded by
		\begin{align}\label{four terms}
			&\sup_{\theta \in \Theta} |P_n e^{\eta_n (W, \theta)} \widehat \Lambda_{n, \theta} (T) - P_n e^{\eta (W, \theta)} \widehat \Lambda_{n, \theta} (T)| + \sup_{\theta \in \Theta} |P_n e^{\eta (W, \theta)} \widehat \Lambda_{n, \theta} (T) - P_n e^{\eta (W, \theta)} \Lambda_{n, \theta} (T)| \notag \\
			+ &\sup_{\theta \in \Theta} |P_n e^{\eta (W, \theta)} \Lambda_{n, \theta} (T) - P_n e^{\eta (W, \theta)} \Lambda_{\theta} (T)| + \sup_{\theta \in \Theta} |P_n e^{\eta (W, \theta)} \Lambda_{\theta} (T) - P e^{\eta (W, \theta)} \Lambda_{\theta} (T)|.
		\end{align}
		The first term in (\ref{four terms}) is bounded by $\mathcal C \sup_{\theta \in \Theta} P_n |\eta_n (W, \theta) - \eta (W, \theta)| \widehat M_{n, \theta} (\tau)^{-1}$ by BA and the fact that the exponential function is Lipschitz continuous on a bounded interval. 
		Note that $\widehat M_{n, \theta} (\tau)$ is bounded from below by some positive constant uniformly over $\Theta$ for sufficiently large $n$ $a.s.$ by BA. Hence, the first term in (\ref{four terms}) converges to $0$ $a.s.$ from Lemma 4 of \cite{horowitz1992smoothed}.
		By BA, the second term in (\ref{four terms}) is bounded by $\mathcal C \sup_{\theta \in \Theta, t \in [0, \tau]} |\widehat \Lambda_{n, \theta} (t) - \Lambda_{n, \theta} (t)| \leq \mathcal C \sup_{\theta \in \Theta} P_n |\widehat M_{n, \theta} (T)^{-1} - M_{0, \theta} (T)^{-1}| \leq \mathcal C \min \{ P_n Y(\tau), P Y(\tau) \}^{-2} \sup_{\theta \in \Theta} P_n | \widehat M_{n, \theta} (T) - M_{0, \theta} (T) |$,
		where the last inequality follows from the mean value theorem. 
		By the strong law of large numbers, $\min \{ P_n Y(\tau), P Y(\tau) \}^{-2} \rightarrow (P Y(\tau))^{-2} > 0 \ a.s.$ Subsequently, $\sup_{\theta \in \Theta} P_n | \widehat M_{n, \theta} (T) - M_{0, \theta} (T) | \leq \sup_{\theta \in \Theta, t \in [0, \tau]} |\widehat M_{n, \theta} (t) - M_{0, \theta} (t)| \rightarrow 0 \ a.s.,$ which follows from
		a similar argument to the first term and the fact that $\{e^{\eta (W, \theta)} Y(t) : \theta \in \Theta, t \in [0, \tau] \}$ is Glivenko-Cantelli (e.g. see page 902 of \cite{wei2018change}). As a result, the second term in (\ref{four terms}) converges to 0 $a.s.$
		Next, BA implies that the third term in (\ref{four terms}) is bounded by $\mathcal C \sup_{\theta \in \Theta, t \in [0, \tau]} |\Lambda_{n, \theta} (t) - \Lambda_{\theta} (t)|$.
		Because $M_{0, \theta} (\cdot)^{-1}$ is monotone increasing and bounded, an application of Lemma 9.11 of \cite{kosorok2008introduction} gives that $\{ M_{0, \theta} (T)^{-1} : \theta \in \Theta \}$ is Glivenko-Cantelli. Because $\{ 1 \{ T \leq t \}  : \in [0, \tau] \}$ is Glivenko-Cantelli,  $\{ \delta 1 \{ T \leq t \} / M_{0, \theta} (T) : \theta \in \Theta, t \in [0, \tau] \}$ is also Glivenko-Cantelli . Hence, the third term in (\ref{four terms}) converges to zero $a.s.$
		Lastly, $\Lambda_{\theta} (\cdot)$ is monotone increasing and bounded so that $\{ \Lambda_{\theta} (T) : \theta \in \Theta \}$ is Glivenko-Cantelli. Because $\{ \eta (W, \theta) : \theta \in \Theta \}$ is also Glivenko-Cantelli, their product $\{e^{\eta (W, \theta)} \Lambda_{\theta} (T) : \theta \in \Theta, t \in [0, \tau] \}$ is Glivenko-Cantelli by Theorem 3 of \cite{vaart2000preservation}. 
		This ensures the $a.s.$ convergence of the fourth term in (\ref{four terms}). 
		Therefore, we prove $(b)$.

		By a straightforward calculation, $\widetilde \Lambda_n (s) = P_n \delta 1 \{T \leq s \} / M_{0, \theta_0} (T)$. Thus, similarly to $(b)$, we obtain $a.s.$ convergence in $(c)$.

		Lastly, for $(d)$, recall that $\lambda_{\theta} (t) / \lambda_0 (t) = M_{0, \theta_0} (t) / M_{0, \theta} (t)$. It follows from the strong law of large numbers that $P_n \delta \log M_{0, \theta_0} (T) \rightarrow P \delta \log M_{0, \theta_0} (t) \ a.s.$
		Further, because $\log M_{0, \theta} (\cdot)$ is monotone decreasing and bounded, $\{ \log M_{0, \theta} (T) : \theta \in \Theta \}$ is Glivenko-Cantelli, and so is $\{ \delta \log M_{0, \theta} (T) : \theta \in \Theta  \}$. This shows the $a.s.$ convergence in $(d)$.
		This completes the proof.

		\end{proof}

		\begin{proof}[Proof Proposition \ref{roc psi}]
			By a straightforward calculation, $\nabla_{\psi} l^{*}_n (\theta)$ can be expressed as follows:
			\begin{align}\label{score_psi}
				&\frac{1}{n} \sum_{i = 1}^n \delta_i \left[ \frac{X_i}{h_n} (U_i' \gamma) \mc K' \left(\frac{V_i + X_i' \psi}{h_n} \right) - \frac{\frac{1}{n} \sum_{j = 1}^n Y_j (T_i) \frac{X_j}{h_n} (U_j' \gamma)  \mc K' \left(\frac{V_j + X_j' \psi}{h_n} \right) e^{\eta_n (W_j \theta)} }{\frac{1}{n} \sum_{j = 1}^n Y_j (T_i) e^{\eta_n (W_j \theta)}} \right] \notag \\
				= &P_n \Psi_n^{(1)} (\theta) - P_n \left( \delta \frac{\widetilde P_n \Psi_n^{(3)} (\theta, T)}{\widetilde P_n \Psi_n^{(2)} (\theta, T)} \right),
			\end{align}
			where $\widetilde P_n$ is an empirical measure with $T$ fixed. For example, $\widetilde P_n \Psi^{(4)} (\theta, T_i) = \frac{1}{n} \sum_{j = 1}^n Y_j (T_i) e^{\eta (W_j, \theta)}$.
			It follows from Lemma \ref{lem_roc} that
			\begin{align}\label{uni_convergence_score}
				\sup_{\theta \in \Theta_n} \left| \nabla_{\psi} l^{*}_n (\theta) - \tilde{S}_p^{\psi} (\psi) \right| = o_p (1),
			\end{align}
			where $\tilde{S}_p^{\psi} (\psi) := \int_{0}^{\tau} \mathbb E[Y(t) U' \gamma_0 (X/h_n) \mc K' ((V + X'\psi)/h_n)e^{Z'\beta_0}
			(e^{ U'\gamma_0 1\{V + X'\psi_0 \geq 0 \} } - e^{U'\gamma_0 \mc K((V + X' \psi)/h_n)}) ] d\Lambda_0 (t)$ and
			$\Theta_n := \{\theta \in \Theta : |\theta - \theta_0| \leq r_n \}$ are chosen such that $r_n \rightarrow 0$ and $\mbb P(|\hat{\theta}_n - 
			\theta_0| \geq r_n) \rightarrow 0$. Further, it must be noted that $\Theta_n \subset \Theta^{\circ}$ for sufficiently large $n$ by Assumption \ref{the parameter space}(a). Thus, for every $\ve > 0$, the first-order condition for MSPLE $\nabla_{\psi} l^{\ast}_n (\widehat \theta_n) = 0$ gives that
			\begin{align*}
				\mbb P\left(|\tilde S^{\psi}_p (\hat{\psi}_n)| \geq \ve \right) \leq &\mbb P\left(\hat{\theta}_n \notin \Theta_n\right) + \mbb P\left(|\nabla_{\psi} l^{*}_n (\hat \theta_n) - \tilde S^{\psi}_p (\hat{\psi}_n)| \geq \ve, \hat{\theta}_n \in \Theta_n \cap \Theta^{\circ}\right) \notag \\
				\leq &\mbb P\left(\hat{\theta}_n \notin \Theta_n\right) + \mbb P\left(\sup_{\theta \in \Theta_n} |\nabla_{\psi} l^{*}_n (\theta) - \tilde S^{\psi}_p (\psi)| \geq \ve \right),
			\end{align*}
			for sufficiently large $n$. Hence, $\tilde S^{\psi}_p (\hat{\psi}_n) = o_p (1)$ holds by (\ref{uni_convergence_score}).
			
			We will now derive the convergence rate of $\hat{\psi}_n$ from $\tilde S^{\psi}_p (\hat{\psi}_n) = o_p (1)$. Henceforth, let $(t', z, u, x, s)$ in integrals be a realization of $(T, Z, U, X, S)$. First, from the change of variable, $\rho = (s + x'(\psi - \psi_0)) / h_n$, and the mean value theorem along with Assumption \ref{kernel and smoothing 2}(b) and Proposition
			\ref{consistency}, for any $\psi$, the integrand in $\tilde{S}_p^{\psi} (\psi)$ is written as
			\begin{align}\label{step2_1}
				& \mathbb E[Y(t) U' \gamma_0 (X /h_n) \mc K' ((V + X' \psi)/h_n)e^{Z'\beta_0}
			(e^{ U'\gamma_0 1\{V + X'\psi_0 \geq 0 \} } - e^{U'\gamma_0 \mc K((V + X' \psi)/h_n)}) ] \notag \\
			= & \int dF_{R} \int 1\{t' \geq t \} u'\gamma_0 x \mc K'(\rho) e^{z' \beta_0} \left(e^{u'\gamma_0 1\{\rho h_n - x' (\psi - \psi_0) \geq 0 \}} 
			- e^{u' \gamma_0 \mc K(\rho)} \right) f_{S|R} (\rho h_n - x' (\psi - \psi_0) | r) d\rho \notag \\
			= & \int dF_{R} \int 1\{t' \geq t \} u'\gamma_0 x \mc K'(\rho) e^{z' \beta_0} \left(e^{u'\gamma_0 1\{\rho h_n - x' (\psi  - \psi_0) \geq 0 \}} 
			- e^{u' \gamma_0 \mc K(\rho)} \right) f_{S|R}(0 | r) d\rho + O(h_n + |\psi - \psi_0|),
			\end{align}
			uniformly in $t \in [0, \tau]$ as $n \rightarrow \infty$ and $|\psi - \psi_0| \rightarrow 0$. 
			By a straightforward integral calculation, the leading term on the right-hand side of (\ref{step2_1}) equals
			\begin{align}\label{step2_2}
			&\int 1\{t' \geq t \} x e^{z' \beta_0} f_{S|R}(0 | r) u' \gamma_0 \left[\int_{- \infty}^{\frac{x' (\psi  - \psi_0)}{h_n}} \mc K'(\rho) d\rho + \int_{\frac{ x' (\psi - \psi_0)}{h_n}}^{\infty} \mc K'(\rho) e^{u'\gamma_0} d\rho
			- \int_{- \infty}^{\infty} \mc K'(\rho) e^{u'\gamma_0 \mc K(\rho)} d\rho \right] dF_{R}  \notag \\
			=&  \int 1 \{t' \geq t \} x e^{z'\beta_0} f_{S|R}(0 | r) \left[\mc K(x' (\psi - \psi_0)/h_n) u' \gamma_0 (1 - e^{u'\gamma_0}) +
			e^{u'\gamma_0} (u'\gamma_0 - 1) + 1  \right]  dF_{R}.
			\end{align}
			Let $\psi_n := (\psi - \psi_0) / h_n$, $H(x'\psi_n, u, z, t, t') := 1 \{t' \geq t \} e^{z'\beta_0} \left[\mc K(x' \psi_n) u' \gamma_0 (1 - e^{u'\gamma_0}) +
			e^{u'\gamma_0} (u'\gamma_0 - 1) + 1  \right]$ and $dG(r, t) = f_{S|R}(0 | r) d F_{R}d\Lambda_0 (t)$.
			Then, by (\ref{step2_1}) and (\ref{step2_2}),
			\begin{align*}
				\tilde{S}_p^{\psi} (\psi) = \int_0^{\tau} \int x H(x'\psi_n, u, z, t, t') d G(r, t) + O(h_n + |\psi - \psi_0|).
			\end{align*}
			Since $\widehat \psi_n \rightarrow_p \psi_0$, evaluating the above inequality at $\psi = \widehat \psi_n$ gives
			\begin{align*} 
				\tilde{S}_p^{\psi} (\widehat \psi_n) = \int_0^{\tau} \int x H(x'\ddot{\psi}_n, u, z, t, t') d G(r, t) + o_p (1),
			\end{align*}
			where $\ddot{\psi}_n := (\widehat \psi_n - \psi_0)/h_n$. Since $\tilde{S}_p^{\psi} (\widehat \psi_n) = o_p (1)$, multiplying both sides of the above equation by $\ddot \psi_n / \| \ddot \psi_n \|$
			gives
			\begin{align}\label{score equation}
				\int_0^{\tau} \int x' \ddot \psi_n / \| \ddot \psi_n \| H(x'\ddot{\psi}_n, u, z, t, t') d G(r, t) = o_p (1).
			\end{align}
			
			Based on (\ref{score equation}), we prove $\ddot{\psi}_n  = O_p (1)$. Specifically, we show that
			\begin{align}\label{uniform bound}
				\sup_{\ddot \psi: \| \ddot \psi \| > M} \int_0^{\tau} \int x' \ddot \psi / \| \ddot \psi \| H(x'\ddot{\psi}, u, z, t, t') d G(r, t) < - \delta,
			\end{align}
			for some $M > 0$ and $\delta > 0$. Then, $\ddot \psi_n = O_p (1)$ follows from (\ref{score equation}) and (\ref{uniform bound}).

			Split the integral in (\ref{uniform bound}) into three parts as $\mathcal B_{1} + \mathcal B_{2} + \mathcal B_{3}$, where
			\begin{align*}
				\mathcal B_{1} &= \int_0^{\tau} \int_{x'\ddot{\psi} \geq \sqrt{M}} x' \ddot \psi / \| \ddot \psi \| H(x'\ddot{\psi}, u, z, t, t') d G(r, t), \\
				\mathcal B_{2} &= \int_0^{\tau} \int_{x'\ddot{\psi} \leq - \sqrt{M}} x' \ddot \psi / \| \ddot \psi \| H(x'\ddot{\psi}, u, z, t, t') d G(r, t), \\
				\mathcal B_{3} &= \int_0^{\tau} \int_{|x'\ddot{\psi} | < \sqrt{M}} x' \ddot \psi / \| \ddot \psi \| H(x'\ddot{\psi}, u, z, t, t') d G(r, t).
			\end{align*}
			First, we have $\sup_{\ddot \psi: \| \ddot \psi \| > M} |\mathcal B_3| \rightarrow 0$ as $M \rightarrow \infty$ because $1\{ |x'\ddot{\psi} | < \sqrt{M} \} x' \ddot \psi / \| \ddot \psi \| \rightarrow 0$ uniformly over $\{ \ddot \psi : \| \ddot \psi \| > M \}$, and the support of $X$ and 
			$H(x'\ddot{\psi}_n, u, z, t, t')$ is uniformly bounded over its support.
			We proceed to consider $\mathcal B_{1} + \mathcal B_{2}$. Observe that we have 
			$\mathcal K (x' \ddot \psi) \rightarrow 1$ as $x'\ddot \psi \rightarrow \infty$, and hence $H(x'\ddot{\psi}, u, z, t, t') \rightarrow 1 \{ t' \geq t \} e^{z'\beta_0} (u'\gamma_0 - e^{u'\gamma_0} + 1)$ uniformly over $\{ x, \ddot \psi : \| \ddot \psi \| > M \ \text{and} \ x'\ddot \psi > \sqrt{M}  \}$ and the support of $(x, u, z, t, t')$, as $M \rightarrow \infty$. Consequently,
			\begin{align}\label{B_1}
				\sup_{\ddot \psi : \| \ddot \psi \| > M} \left| \mathcal B_{1} - \int_{0}^{\tau} \int_{x'\ddot \psi \geq \sqrt{M}} x' \ddot \psi / \| \ddot \psi \| 1 \{ t' \geq t \} e^{z'\beta_0} (u'\gamma_0 - e^{u'\gamma_0} + 1) dG(r, t) \right| = o(1),
			\end{align}
			as $M \rightarrow \infty$ because $x' \ddot \psi / \| \ddot \psi \|$ is bounded.
			Let $c_1, c_2 > 0$ be such that $\mathbb P (|X'e| > c_1) > c_2$, for any $\| e \| = 1$, as in Lemma \ref{lem: matrix}. Observe that $s - e^s + 1 \leq 0$ for any $s \in \mathbb R$ and $\{x: x'\ddot \psi \geq \sqrt{M} \} \supset \{x: x' \ddot \psi \geq c_1 \|\ddot \psi \| \}$, when $M$ is sufficiently large and $\|\ddot \psi \| > M$.
			Then, for sufficiently large $M$, the integral in (\ref{B_1}) is no larger than
			\begin{align}\label{B_1 leading}
				&c_1 \int_{0}^{\tau} \int_{x'\ddot \psi / \| \ddot \psi  \|  \geq c_1 } 1 \{ t' \geq t \} e^{z'\beta_0} (u'\gamma_0 - e^{u'\gamma_0} + 1) dG(r, t) \notag \\
			= &c_1 \int_{0}^{\tau} \int 1 \{ X'\ddot \psi / \| \ddot \psi  \|  \geq c_1 \} \mathbb E [Y(\tau) e^{z'\beta_0} (u'\gamma_0 - e^{u'\gamma_0} + 1) f_{S|R} (0 | R)|X] d\mathbb P d\Lambda_0 (t),
			\end{align}
			for any $\ddot \psi \in \{ \ddot \psi : \| \ddot \psi \| > M \}$.
			By a similar argument to $\mathcal B_{1}$, we have
			\begin{align}\label{B_2}
				\sup_{\ddot \psi : \| \ddot \psi \| > M } \left| \mathcal B_{2} - \int_{0}^{\tau} \int_{x'\ddot \psi \leq - \sqrt{M}} x' \ddot \psi / \| \ddot \psi \| 1 \{ t' \geq t \} e^{z'\beta_0} (e^{u'\gamma_0} (u'\gamma_0 - 1) + 1) dG(r, t) \right| = o(1),
			\end{align}
			as $M \rightarrow \infty$. Since $e^s(s - 1) + 1 \geq 0$ for any $s \in \mathbb R$, for sufficiently large $M$, the integral in (\ref{B_2}) is no larger than
			\begin{align}\label{B_2 leading}
				- c_1 \int_{0}^{\tau} \int 1 \{ X'\ddot \psi / \| \ddot \psi  \|  \leq - c_1 \} \mathbb E [Y(\tau) e^{z'\beta_0} (e^{u'\gamma_0} (u'\gamma_0 - 1) + 1) f_{S|R} (0 | R)|X] d\mathbb P d\Lambda_0 (t),
			\end{align}
			for any $\ddot \psi \in \{ \ddot \psi : \| \ddot \psi \| > M \}$. Let $D(X)$ denote 
			\begin{align*} 
				\max \left\{ \mathbb E [Y(\tau) e^{z'\beta_0} (u'\gamma_0 - e^{u'\gamma_0} + 1) f_{S|R} (0 | R) | X],
					- \mathbb E [ Y(\tau) e^{z'\beta_0} (e^{u'\gamma_0} (u'\gamma_0 - 1) + 1) f_{S|R} (0 | R) | X] \right\}.
			\end{align*}
			Then, by (\ref{B_1}), (\ref{B_1 leading}), (\ref{B_2}), and (\ref{B_2 leading}), we have
			\begin{align}\label{bound B_1 + B_2}
				\sup_{\ddot \psi : \| \ddot \psi \| > M} (\mathcal B_1 + \mathcal B_2) \leq c_1 \left( \sup_{\ddot \psi : \| \ddot \psi \| > M} \int_{0}^{\tau} \int 1 \{ | X'\ddot \psi / \| \ddot \psi  \| | \geq  c_1 \} D(X) d\mathbb P d\Lambda_0 (t) \right) + o(1),
			\end{align}
			as $M \rightarrow \infty$.
			Let $c_3 \in (0, 1)$ be sufficiently large such that $c_2 + c_3 > 1$. Then, since $D(X) < 0$ $\mathbb P$-$a.s.$ from Assumption \ref{moment condition 2}(b), 
			$\mathbb P ( D(X) \leq c_4 ) \geq c_3$ for some $c_4 < 0$. Therefore, for any $\ddot \psi \in \{\ddot \psi : \| \ddot \psi \| > M \}$,
			\begin{align*}
				\int  1 \{ | X'\ddot \psi / \| \ddot \psi  \| | \geq  c_1 \} D(X) d\mathbb P 
				\leq &c_4 \mathbb P \left( \left\{ | X'\ddot \psi / \| \ddot \psi  \| | \geq  c_1  \right\} \cap \left\{D(X) \leq c_4 \right\} \right) \notag \\
				\leq &c_4  \left[ \mathbb P \left( | X'\ddot \psi / \| \ddot \psi  \| | \geq  c_1 \right) + \mathbb P \left( D(X) \leq c_4 \right) - 1  \right] \notag \\ 
				\leq &c_4 (c_1 + c_2 - 1),
			\end{align*}
			because $\left\| \ddot \psi / \| \ddot \psi\| \right\| = 1$. Since the right-hand side does not depend on the value of $\ddot \psi$, we establish (\ref{uniform bound}) from
			(\ref{bound B_1 + B_2}) and $\lim_{M \rightarrow \infty} \sup_{\ddot \psi: \| \ddot \psi \| > M} |\mathcal B_3| = 0$. This completes the proof.
			
			\end{proof}

			\begin{proof}[Proof of Proposition \ref{a score}] First, we introduce the following notation:
				\begin{align*}
					\begin{cases}
						S^{(0)}_n (t, \theta) := n^{-1} \sum_{j = 1}^n Y_j (t) e^{\eta (W_j, \theta)} \\
						S^{(1)}_n (t, \theta) := n^{-1} \sum_{j = 1}^n Y_j (t) \phi(\theta)_j e^{\eta (W_j, \theta)}
					\end{cases}
					\begin{cases}
						\widetilde S^{(0)}_n (t, \theta) := n^{-1} \sum_{j = 1}^n Y_j (t) e^{\eta_n (W_j, \theta)} \\
						\widetilde S^{(1)}_n (t, \theta) := n^{-1} \sum_{j = 1}^n Y_j (t) \phi^{(n)}(\theta)_j e^{\eta_n (W_j, \theta)}
					\end{cases}
				\end{align*}
				and henceforth, for notational convenience, we write $\phi_i$ and $\phi^{(n)}_i$ for $\phi (\theta_0)_i$ and $\phi^{(n)} (\theta_0)_i$, respectively.
				First, we observe that 
				\begin{align}\label{score expression}
					n^{1/2} \nabla_{\xi}  l^*_{n} (\theta_0) = n^{- 1/2} \sum_{i = 1}^n \delta_i \left[ \phi^{(n)}_i - \frac{\widetilde S_n^{(1)} (T_i, \theta_0)}{\widetilde S_n^{(0)} (T_i, \theta_0)} \right].
				\end{align}
				It must be noted that, by BA,
				\begin{align*}
					\left\|n^{- 1/2} \sum_{i = 1}^n \delta_i \phi^{(n)}_i - n^{- 1/2} \sum_{i = 1}^n \delta_i \phi_i \right\| \leq \mathcal C n^{- 1/2} \sum_{i = 1}^n \left| \mathcal K \left( \frac{V_i + X_i' \psi_0}{h_n} \right) - 1 \{ V_i + X_i'\psi_0 \geq 0 \} \right|,
				\end{align*}
				and the right-hand side is $o_p (1)$ by Lemma \ref{error indicator kernel}. Similarly, a straight-forward calculation yields 
				\begin{align}\label{kernel ind bound}
					\left\| n^{- 1/2} \sum_{i = 1}^n \delta_i \frac{\widetilde S^{(1)}_n (T_i, \theta_0)}{\widetilde S^{(0)}_n (T_i, \theta_0)} -  n^{- 1/2} \sum_{i = 1}^n \delta_i \frac{ S^{(1)}_n (T_i, \theta_0)}{ S^{(0)}_n (T_i, \theta_0)}\right\| \leq \frac{\mathcal C n^{- 1/2} \sum_{i = 1}^n \left| \mathcal K \left( \frac{V_i + X_i' \psi_0}{h_n} \right) - 1 \{ V_i + X_i'\psi_0 \geq 0 \} \right|}{\left( n^{-1} \sum_{j = 1}^n Y_j (\tau)\right)^2}.
				\end{align}
				Since $n^{-1} \sum_{j = 1}^n Y_j (\tau) \rightarrow_p \mathbb P(T \geq \tau) > 0$ by the law of large numbers, Lemma \ref{error indicator kernel} implies that the right-hand side of (\ref{kernel ind bound}) is $o_p (1)$. Therefore, from (\ref{score expression}), we obtain 
				\begin{align}\label{score approximation}
					n^{1/2} \nabla_{\xi}  l^*_{n} (\theta_0) = n^{- 1/2} \sum_{i = 1}^n \delta_i \left[ \phi_i - \frac{S_n^{(1)} (T_i, \theta_0)}{ S_n^{(0)} (T_i, \theta_0)} \right] + o_p (1).
				\end{align}
				Now, the problem has boiled down to an asymptotic theory for a standard Cox model.
				Namely, it follows from Theorem 8.2.1 and 8.4.1 of \cite{fleming2011counting} that 
				\begin{align*} 
					n^{- 1/2} \sum_{i = 1}^n \delta_i \left[ \phi_i - \frac{S_n^{(1)} (T_i, \theta_0)}{ S_n^{(0)} (T_i, \theta_0)} \right] \rightarrow_d N(0, \mathcal I),
				\end{align*}
				which, when combined with (\ref{score approximation}), completes the proof.
				\end{proof}
				
				\begin{proof}[Proof of Proposition \ref{convergence matrices}] 
					In the proof, $\theta$ in a matrix-valued function may take different value row by row.
					Observe that
				\[
				\nabla_{\xi \xi'} l_n^{*} (\theta) = \frac{1}{n} \sum_{i = 1}^n \delta_i \left( \frac{\widetilde S_n^{(1)} (T_i, \theta)^{\otimes 2} - \widetilde S_n^{(0)} (T_i, \theta) \widetilde S_n^{(2)} (T_i, \theta)}{\widetilde S_n^{(0)} (T_i, \theta)^2} \right),
				\]
				where $\widetilde S_n^{(2)} (t, \theta) := n^{-1} \sum_{j = 1}^n Y_j (t) \phi^{(n)} (\theta)_j^{\otimes 2} e^{\eta_n (W_j, \theta)}$. 
				By a straightforward calculation, we have 
				\begin{align}\label{approximation ind kernel 1}
					\sup_{\theta \in \Theta} \left|\frac{1}{n} \sum_{i = 1}^n \delta_i \frac{\widetilde S_n^{(1)} (T_i, \theta)^{\otimes 2}}{\widetilde S_n^{(0)} (T_i, \theta)^2} - \frac{1}{n} \sum_{i = 1}^n \delta_i \frac{S_n^{(1)} (T_i, \theta)^{\otimes 2}}{S_n^{(0)} (T_i, \theta)^2} \right| \leq \frac{\mathcal C \sup_{\theta \in \Theta} \frac{1}{n} \sum_{i = 1}^n \left| \mathcal K (\frac{V_i + X_i'\psi}{h_n}) - 1\{V_i + X_i'\psi \geq 0 \} \right|}{\left(\frac{1}{n} \sum_{i = 1}^n Y_i (\tau) \right)^4},
				\end{align} 
				where we use BA and the fact that for any real number $a, b$ and any real-valued matrix $A, B$, $|a A - b B| \leq |a| |A - B| + |a - b| |B|$. Since $\frac{1}{n} \sum_{i = 1}^n Y_i (\tau) \rightarrow_p \mathbb P (T_i \geq \tau) > 0$, by the law of large numbers,
				 it follows from Lemma 4 of \cite{horowitz1992smoothed} that the right-hand side of (\ref{approximation ind kernel 1}) is $o_p(1)$. A similar argument gives
				\begin{align*} 
					\sup_{\theta \in \Theta} \left|\frac{1}{n} \sum_{i = 1}^n \delta_i \frac{\widetilde S_n^{(2)} (T_i, \theta)}{\widetilde S_n^{(0)} (T_i, \theta)} - \frac{1}{n} \sum_{i = 1}^n \delta_i \frac{S_n^{(2)} (T_i, \theta)}{S_n^{(0)} (T_i, \theta)} \right| \leq \frac{\mathcal C \sup_{\theta \in \Theta} \frac{1}{n} \sum_{i = 1}^n \left| \mathcal K (\frac{V_i + X_i'\psi}{h_n}) - 1\{V_i + X_i'\psi \geq 0 \} \right|}{\left(\frac{1}{n} \sum_{i = 1}^n Y_i (\tau) \right)^2},
				\end{align*}
				where $S_n^{(2)} (t, \theta) := n^{-1} \sum_{j = 1}^n Y_j (t) \phi (\theta)_j^{\otimes 2} e^{\eta (W_j, \theta)}$, and the right-hand side is $o_p (1)$. Therefore,
				\begin{align}\label{hessian approximation}
					\nabla_{\xi \xi'} l_n^{*} (\Bar \theta_n) = \frac{1}{n} \sum_{i = 1}^n \delta_i \left( \frac{S_n^{(1)} (T_i, \Bar \theta_n)^{\otimes 2} - S_n^{(0)} (T_i, \Bar \theta_n) S_n^{(2)} (T_i, \Bar \theta_n)}{S_n^{(0)} (T_i, \Bar \theta_n)^2} \right) + o_p (1).
				\end{align}
				Split $\theta$ into two parts as $\theta = (\xi, \psi)$. Then, similar to (\ref{approximation ind kernel 1}), we have
				\begin{align}\label{bound error ind}
					\left| \frac{1}{n} \sum_{i = 1}^n \delta_i \frac{S_n^{(1)} (T_i, \Bar \theta_n)^{\otimes 2}}{S_n^{(0)} (T_i, \Bar \theta_n)^2} - \frac{1}{n} \sum_{i = 1}^n \delta_i \frac{S_n^{(1)} (T_i, (\Bar \xi_n, \psi_0))^{\otimes 2}}{S_n^{(0)} (T_i, (\Bar \xi_n, \psi_0))^2}\right|
					\leq \frac{\mathcal C \frac{1}{n} \sum_{i = 1}^n |1 \{ V_i + X_i'\Bar \psi_n \geq 0 \} - 1 \{V_i + X_i'\psi_0 \geq 0 \}|}{\left(\frac{1}{n} \sum_{i = 1}^n Y_i (\tau) \right)^4}.
				\end{align}
				Since $\frac{1}{n} \sum_{i = 1}^n Y_i (\tau) \rightarrow_p \mathbb P (T_i \geq \tau) > 0$ by the law of large numbers and $\Bar \psi_n \rightarrow_p \psi_0$, Lemma \ref{error two indicator}
				implies that the right-hand side of (\ref{bound error ind}) is $o_p (1)$.
				By a similar argument, we obtain
				\begin{align*}
					\left| \frac{1}{n} \sum_{i = 1}^n \delta_i \frac{S^{(2)}_n (T_i, \Bar \theta_n)}{S^{(0)}_n (T_i, \Bar \theta_n)} - \frac{1}{n} \sum_{i = 1}^n \delta_i \frac{S^{(2)}_n (T_i, (\Bar \xi_n, \psi_0))}{S^{(0)}_n (T_i, (\Bar \xi_n, \psi_0))} \right| = o_p (1).
				\end{align*}
				Therefore, (\ref{hessian approximation}) and the subsequent argument implies that
				\begin{align}\label{hessian approximation 2}
					\nabla_{\xi \xi'} l_n^{*} (\Bar \theta_n) = \frac{1}{n} \sum_{i = 1}^n \delta_i \left( \frac{S_n^{(1)} (T_i, (\Bar \xi_n, \psi_0))^{\otimes 2} - S_n^{(0)} (T_i, (\Bar \xi_n, \psi_0)) S_n^{(2)} (T_i, (\Bar \xi_n, \psi_0))}{S_n^{(0)} (T_i, (\Bar \xi_n, \psi_0))^2} \right) + o_p (1).
				\end{align}
				We now apply the result for a basic Cox model. Specifically, by Theorem 8.2.2 and 8.4.1 of \cite{fleming2011counting}, we establish
				\begin{align}\label{hessian approximation 3}
					\frac{1}{n} \sum_{i = 1}^n \delta_i \left( \frac{S_n^{(1)} (T_i, (\Bar \xi_n, \psi_0))^{\otimes 2} - S_n^{(0)} (T_i, (\Bar \xi_n, \psi_0)) S_n^{(2)} (T_i, (\Bar \xi_n, \psi_0))}{S_n^{(0)} (T_i, (\Bar \xi_n, \psi_0))^2} \right) \rightarrow_p - \mathcal I.
				\end{align}
				The stated result follows from (\ref{hessian approximation 2}) and (\ref{hessian approximation 3}).
				\end{proof}
				
				\begin{proof}[Proof of Proposition \ref{asy msple xi}]
				By Assumption \ref{the parameter space}(a), Proposition \ref{consistency}, Assumption \ref{pd variance matrix}, Proposition \ref{convergence matrices}, and an equation (\ref{expansion xi 0}), we have, with probability approaching one,
				\begin{align}\label{expansion xi}
					n^{1/2} (\widehat \xi_n - \xi_0) = - \nabla_{\xi \xi'} l^{\ast}_n (\Bar \theta_n)^{-1} n^{1/2} \nabla_{\xi} l^{\ast}_n (\theta_0) - \nabla_{\xi \xi'} l^{\ast}_n (\Bar \theta_n)^{-1} \nabla_{\xi \psi'} l^{\ast}_n (\Bar \theta_n) n^{1/2} (\widehat \psi_n - \psi_0).
				\end{align}
				By Propositions \ref{a score} and \ref{convergence matrices}, the first term on the right-hand side of (\ref{expansion xi}) converges to $N(0, \mathcal I^{-1})$ in distribution. 
				Hence, the stated result holds if the second term on the right-hand side of (\ref{expansion xi}) is $o_p(1)$. 
				Since $n^{1/2} (\widehat \psi_n - \psi_0) = o_p (1)$ from Assumption \ref{kernel and smoothing 2}(c) and Proposition \ref{roc psi}, and $\nabla_{\xi \xi'} l^{\ast}_n (\Bar \theta_n)^{-1} = O_p (1)$ from Proposition \ref{convergence matrices},
				the desired result follows if we show $\nabla_{\xi \psi'} l^{\ast}_n (\Bar \theta_n) = O_p (1)$.
				Define $\varphi^{(n)} (\theta)_i := (U_i ' \gamma) X_i / h_n \mc K' ((V_i + X_i' \psi)/ h_n)$ and
				\[
					\Upsilon^{(n)} (\theta)_i :=
					\begin{pmatrix}
						O_{p_1 \times q} \\
						U_i X_i' /h_n \mc K' ((V_i + X_i' \psi)/h_n)
					\end{pmatrix}.
				\]
				Then, observe that
				\begin{align*}
					&\nabla_{\xi \psi'} l^{\ast}_n (\theta) = \frac{1}{n} \sum_{i = 1}^n \delta_i \left[ \Upsilon^{(n)} (\theta)_i - \frac{n^{-1} \sum_{j = 1}^n Y_j (T_i) \left( \Upsilon^{(n)} (\theta)_j +  \phi^{(n)} (\theta)_j \varphi^{(n)} (\theta)_j' \right) e^{\eta_n (W_j, \theta)}}{\tilde S_n^{(0)} (T_i, \theta)} \right. \notag \\
					& \left. + \frac{\left(n^{-1} \sum_{j = 1}^n Y_j (T_i) \phi^{(n)} (\theta)_j e^{\eta_n (W_j, \theta)} \right) \left( n^{-1} \sum_{j = 1}^n Y_j (T_i) \varphi^{(n)} (\theta)_j e^{\eta_n (W_j, \theta)} \right)' }{\tilde S_n^{(0)} (T_i, \theta)^2}\right].
				\end{align*}
				Hence, by BA, $\sup_{\theta \in \Theta} |\nabla_{\xi \psi'} l^{\ast}_n (\theta)|$ is bounded by
				\[
					\mc C \left( \sup_{\theta \in \Theta} \frac{1}{n} \sum_{i = 1}^n \frac{\mc K' ((V_i + X_i' \psi)/h_n)}{h_n} + \frac{\sup_{\theta \in \Theta} \frac{1}{n} \sum_{i = 1}^n \frac{\mc K' ((V_i + X_i' \psi)/h_n)}{h_n}}{n^{-1} \sum_{i = 1} Y_i (\tau)}
					+ \frac{\sup_{\theta \in \Theta} \frac{1}{n} \sum_{i = 1}^n \frac{\mc K' ((V_i + X_i' \psi)/h_n)}{h_n}}{\left(n^{-1} \sum_{i = 1} Y_i (\tau) \right)^2} \right).
				\]
				By the law of large numbers, $n^{-1} \sum_{i = 1} Y_i (\tau) \rightarrow_p \mbb P (T \geq \tau) > 0$. Furthermore,
				\begin{align*}
					&\sup_{\theta \in \Theta} \frac{1}{n} \sum_{i = 1}^n \frac{\mc K' ((V_i + X_i' \psi)/h_n)}{h_n} \notag \\ \leq &\sup_{\theta \in \Theta} \left|  \frac{1}{n} \sum_{i = 1}^n \frac{\mc K' ((V_i + X_i' \psi)/h_n)}{h_n}
					- E\left[ \frac{\mc K' ((V + X' \psi)/h_n)}{h_n} \right] \right| + \sup_{\theta \in \Theta} E\left[  \frac{\mc K' ((V + X' \psi)/h_n)}{h_n}   \right].
				\end{align*}
				By the proof of Lemma \ref{lem_roc}(a), the first term on the right-hand side is $o_p(1)$. Additionally, the second term is bounded uniformly in $n$ by (\ref{kernel inequality}). 
				Therefore, $\nabla_{\xi \psi'} l^{\ast}_n (\Bar \theta_n) = O_p (1)$. This completes the proof.
				\end{proof}

\subsection{Auxiliary Results}

In this appendix, we will state several auxiliary results used in Appendix A.1.

\begin{lemma}\label{martingale property}
	Suppose the assumptions of Proposition \ref{consistency} hold. Define $N_i (t) := 1\{T_i \leq t, \delta_i = 1 \}$ and $N^{U}_i (t) := 1\{ T_i \leq t, \delta_i = 0 \}$ for $t \in [0, \tau]$ and $i = 1, \dots, n$, with $n \in \mbb N$. For fixed $n \in \mbb N$, define a filtration $\{ \mc F_{n, t} : t \in [0, \tau] \}$ as $\mc F_{n, t} := \sigma(N_i (u), N_i^{U} (u), W_i : u \in [0, t], i = 1, \dots, n)$. Then, \\ (a) $\mc F_{n, t}$ is right-continuous. \\ (b) $M_i (t) := N_i (t) - \int_{0}^{t} Y_i (u) e^{\eta_0 (W_i, \theta_0)} d \Lambda_0 (u)$ is an $\mc F_{n, t}$-martingale.
\end{lemma}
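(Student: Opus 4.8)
The plan is to prove the two parts in turn, handling the purely measure-theoretic right-continuity in (a) and then the explicit compensator computation in (b). For part (a), I would use that $(N_i, N_i^U)_{i=1}^n$ is a multivariate counting process whose only jump times are the observed times $T_i$ and whose compensators are continuous, since $\Lambda_0$ is (absolutely) continuous under Assumption \ref{a survival time and a censoring time}(a). The covariates enter only through the time-zero $\sigma$-field: because survival times are positive, $N_i(0) = N_i^U(0) = 0$ a.s., so $\mathcal{F}_{n,0} = \sigma(W_1, \dots, W_n)$, and $\{\mathcal{F}_{n,t}\}$ is the natural filtration of a marked point process with totally inaccessible jump times. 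Right-continuity then follows from standard counting-process theory (cf. \cite{FH}); alternatively one gives the short direct argument that, on $\{T_i > t\}$ for every $i$, the trace of $\mathcal{F}_{n,t}$ is $\sigma(W_1,\dots,W_n)$ enriched only by the information that no jump has occurred, and this trace is unchanged under $\bigcap_{\epsilon>0}\mathcal{F}_{n,t+\epsilon}$ precisely because the distribution of $T^{\circ}$ is atomless.

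For part (b) I would first record the routine properties. The process $M_i$ is adapted: $N_i(t)$ is $\mathcal{F}_{n,t}$-measurable by construction, and the compensator $\int_0^t Y_i(u) e^{\eta_0(W_i,\theta_0)} d\Lambda_0(u)$ depends only on $W_i$ and on $Y_i(u) = 1\{T_i \geq u\}$ for $u \leq t$, hence on $T_i \wedge t$. Integrability is immediate because $|M_i(t)| \leq 1 + \mathcal{C}\Lambda_0(\tau) < \infty$ by BA and finiteness of $\Lambda_0(\tau)$. The martingale property reduces to showing $\mathbb{E}[M_i(t) - M_i(s) \mid \mathcal{F}_{n,s}] = 0$ for $0 \leq s \leq t \leq \tau$. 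Since the $n$ subjects are i.i.d. and $M_i(t) - M_i(s)$ is a function of subject $i$'s data alone, conditioning on the full $\mathcal{F}_{n,s}$ is equivalent to conditioning on subject $i$'s own history $\mathcal{F}_{n,s}^{(i)} := \sigma(N_i(u), N_i^U(u), W_i : u \leq s)$.

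Then I would split on whether subject $i$ has left the risk set. On $\{T_i \leq s\}$ both $N_i(t) - N_i(s)$ and $\int_s^t Y_i(u) e^{\eta_0(W_i,\theta_0)} d\Lambda_0(u)$ vanish, since $Y_i(u) = 0$ for $u > s$. On $\{T_i > s\}$ the trace of $\mathcal{F}_{n,s}^{(i)}$ is generated by $W_i$ alone, so it remains to verify $\mathbb{E}[N_i(t)-N_i(s) \mid T_i > s, W_i] = \mathbb{E}[\int_s^t Y_i(u) e^{\eta_0(W_i,\theta_0)}d\Lambda_0(u) \mid T_i > s, W_i]$. Using the Cox conditional survival function $\mathbb{P}(T^{\circ} \geq u \mid W_i) = \exp(-e^{\eta_0(W_i,\theta_0)}\Lambda_0(u))$, the independent-censoring Assumption \ref{a survival time and a censoring time}(c), and Fubini's theorem, the left side equals $\int_s^t \mathbb{P}(C \geq u \mid W_i)\,\lambda_0(u)e^{\eta_0}e^{-e^{\eta_0}\Lambda_0(u)}\,du \,/\, \mathbb{P}(T_i > s \mid W_i)$, while the right side equals $\int_s^t \mathbb{P}(T_i \geq u \mid W_i)\,e^{\eta_0}\lambda_0(u)\,du \,/\, \mathbb{P}(T_i > s \mid W_i)$; the two integrands coincide (the Cox density factors as hazard times survival), so the difference is zero. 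This is exactly the factorization already carried out in (\ref{identification theta 1})--(\ref{identification lambda 1}), which I would reuse.

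The hard part will be part (a): right-continuity is the only genuinely measure-theoretic step, and it hinges on the jump times being totally inaccessible, guaranteed by the continuity of $T^{\circ}$, so the cleanest route is to invoke the standard result rather than reprove it from scratch. Within (b) the only subtle points are the two reductions—that cross-subject independence lets one replace $\mathcal{F}_{n,s}$ by $\mathcal{F}_{n,s}^{(i)}$, and that on $\{T_i > s\}$ the conditioning collapses to $\sigma(W_i)$—together with the routine care needed for the censoring distribution (possible atoms, $\geq$ versus $>$), which is harmless because $T^{\circ}$ is atomless.
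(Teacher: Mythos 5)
Your proof is correct and takes essentially the same route as the paper, whose own proof is just a two-line citation: part (a) to T26 on p.~304 of \cite{bremaud} for the right-continuity of a point-process internal history, and part (b) to the argument of Theorem 1.3.1 of \cite{FH}; what you have done is write out in full the standard compensator computation (reduction to subject $i$'s own history by independence, splitting on $\{T_i \leq s\}$ versus $\{T_i > s\}$, and matching the two integrands via the factorization in (\ref{identification theta 1})--(\ref{identification lambda 1})) that the paper outsources to those references. One minor quibble: the right-continuity of the natural filtration of a (multivariate) point process does not actually hinge on the jump times being totally inaccessible or on $T^{\circ}$ being atomless --- it holds for any point process --- so the reason you give for (a) is not the essential one, though your conclusion and your fallback to the standard result are both fine.
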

	
\begin{proof}[Proof]
	This lemma is the basic result of the Cox model. Part (a) follows from T26 on page 304 of \cite{bremaud}. For part (b), \cite{fleming2011counting} show the stated result when the model has no covariate $W$. The proof with covariate $W$ follows the argument of the proof of Theorem 1.3.1 of \cite{fleming2011counting}.
\end{proof}

\begin{lemma}\label{empirical process}
	Suppose that the assumptions of Proposition \ref{roc psi} hold. For a class of real-valued functions defined on $(S, \mathcal S)$, $\mathcal F$, a probability measure $Q$, and real numbers $\ve, r > 0$, let $N(\ve, \mathcal F, L_r(Q))$ be a covering number (see page 18 of \cite{kosorok2008introduction} for the definition of a covering number).
	Then the following statements hold, where the supremum $\sup_Q$ is taken over all the finitely supported probability measure $Q$ on $(S, \mathcal S):$ \\
	(a) Let $\mathcal F$ be a VC class of functions with a constant envelope $F$ (refer to pages 156 and 157, and 18 of \cite{kosorok2008introduction} for the definitions of VC class and its related concept, and an envelope, respectively).
	Then there exist $0 < U, A, \nu < \infty$ such that
	$\sup_Q N (\ve U, \mathcal F, L_2 (Q)) \leq (A / \ve)^{\nu}$, for any $0 < \ve < 1$ where $U, A, \nu$ depend only on a VC dimension of $\mathcal F$. \\
	(b) Let $\mathcal F$ be a finite-dimensional vector space of measurable functions. Then $\mathcal F$ is a VC class with its VC dimension being equal to or less than $dim(\mathcal F) + 2$ where $dim(\mathcal F)$
	is a dimension of $\mathcal F$. \\
	(c) Let $\mathcal F$ be a VC class of functions and $\phi$ be a nondecreasing function. Then $\phi(\mathcal F)$ is a VC class with its VC dimension being equal to or less than that of $\mathcal F$. \\
	(d) Let $\mathcal F_{i}$ be a class of functions with a constant envelope $F_i$ such that there exist $0 < U_i, A_i, \nu_i < \infty$  satisfying 
	$\sup_Q N(\ve U_i, \mathcal F_{i}, L_2 (Q)) \leq (A_i / \ve)^{\nu_i}$, for any $0 < \ve < 1 \ (i = 1, 2)$. Then there exist $0 < U^*_1, U^*_2, A^*_1, A^*_2, \nu^*_1, \nu^*_2 < \infty$ such that
	$\sup_Q N(\ve U^*_1, \mathcal F_1 \mathcal F_2, L_2 (Q)) \leq (A^*_1 / \ve)^{\nu^*_1}$ and $\sup_Q N (\ve U^*_2, \mathcal F_1 + \mathcal F_2, L_2 (Q)) \leq (A^*_2 / \ve)^{\nu^*_2}$, for any $0 < \ve < 1$ where $\mathcal F_1 \mathcal F_2 := \{f_1 f_2 : f_1 \in \mathcal F_1, f_2 \in \mathcal F_2 \}$
	and $\mathcal F_1 + \mathcal F_2 := \{f_1 + f_2 : f_1 \in \mathcal F_1, f_2 \in \mathcal F_2 \}$. \\
	(e) Let $\phi:\mathbb R \rightarrow \mathbb R$ be a Lipschitz continuous function and $\mathcal F$ be a class of function with a constant envelope $F$ such that there exist $0 < U, A, \nu < \infty$
	satisfying $\sup_Q N (\ve U, \mathcal F, L_2 (Q)) < (A / \ve)^{\nu}$ for any $0 < \ve < 1$. Then there exist $0 < U^*, A^*, \nu^* < \infty$ such that $\sup_Q N(\ve U^*, \phi (\mathcal F), L_2 (Q)) \leq (A^* / \ve)^{\nu^*}$
	for any $0 < \ve < 1$ where $\phi (\mathcal F) := \{ \phi(f) : f \in \mathcal F \}$. \\
	(f) Let $\Theta \subset \mathbb R^d$ be a non-empty bounded subset with diameter $D$, and let $\mathcal F = \{f_{\theta} : \theta \in \Theta \}$ be a class of functions defined on $S$ indexed by $\Theta$
	such that, for any $\theta_1$ and $\theta_2$, $\sup_{x \in S} | f_{\theta_1} (x) - f_{\theta_2} (x) | \leq M \| \theta_1 - \theta_2 \|$ for $0 < M < \infty$. Then there exist $0 < U, A, \nu < \infty$ such that
	$\sup_{Q} N (\ve U, \mathcal F, L_2 (Q)) \leq (A/\ve)^{\nu}$ for any $0 < \ve < 1$.
\end{lemma}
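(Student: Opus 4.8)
The plan is to prove each of the six parts by assembling standard Vapnik--Chervonenkis results together with a few explicit covering-number constructions, all drawn from the framework of \cite{ksrk}. Parts (a)--(c) record intrinsic VC properties of a single class, whereas (d)--(f) record how a polynomial uniform-entropy bound of the form $\sup_Q N (\ve U, \mc F, L_2 (Q)) \leq (A / \ve)^{\nu}$ is inherited under products, sums, Lipschitz maps, and Lipschitz reparametrizations. Throughout, $\| \cdot \|_{L_2 (Q)}$ denotes the $L_2 (Q)$-seminorm, and the central device is that, for a \emph{constant} envelope $F$, one has $\| F \|_{L_2 (Q)} = F$ for every finitely supported probability measure $Q$, which is what renders every bound uniform in $Q$.

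For part (a) I would invoke the uniform-entropy bound for VC-subgraph classes (Theorem 9.3 of \cite{ksrk}): a class with VC index $V$ obeys $N(\ve \| F \|_{L_2 (Q)}, \mc F, L_2 (Q)) \leq \mc C \, V (16 e)^V \ve^{- 2 (V - 1)}$ for every $0 < \ve < 1$. Because $F$ is constant, replacing $\| F \|_{L_2 (Q)}$ by $F$ and setting $U := F$, $\nu := 2 (V - 1)$, and absorbing $\mc C V (16 e)^V$ into $A$ gives the claim uniformly in $Q$. Part (b) is the assertion that the subgraphs of a $d$-dimensional vector space of functions form a VC class of index at most $d + 2$; I would cite Lemma 9.6 of \cite{ksrk}, the point being that the subgraph $\{ (x, t) : f(x) - t > 0 \}$ ranges over the positivity sets of the $(d + 1)$-dimensional space spanned by $\mc F$ and the coordinate $t$. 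For part (c), since $\phi$ is nondecreasing, $\phi (f(x)) > t$ holds, outside the at most countably many jump levels of $\phi$, exactly when $f(x)$ exceeds the generalized inverse of $\phi$ at $t$; hence the subgraph class of $\phi (\mc F)$ is a reparametrization of that of $\mc F$ and its VC index cannot increase. This is the monotone-transformation preservation result in Chapter 9 of \cite{ksrk}.

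The operational parts (d)--(f) I would establish by direct net constructions. For (d), fix $Q$ and take minimal $\ve$-nets of $\mc F_1$ and $\mc F_2$ in $L_2 (Q)$. Using the constant envelopes and the identity $f_1 f_2 - g_1 g_2 = f_1 (f_2 - g_2) + g_2 (f_1 - g_1)$, any product $f_1 f_2$ is approximated by some $g_1 g_2$ from the product net within $\| f_1 f_2 - g_1 g_2 \|_{L_2 (Q)} \leq F_1 \ve + F_2 \ve$; the triangle inequality does the same for $\mc F_1 + \mc F_2$. The product net has cardinality the product of the two net cardinalities, so multiplying two polynomial bounds keeps the bound polynomial, with $\nu^*_i = \nu_1 + \nu_2$ and rescaled $U^*_i, A^*_i$ depending only on $F_1, F_2, U_1, U_2, A_1, A_2, \nu_1, \nu_2$. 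For (e), Lipschitz continuity gives $\| \phi (f_1) - \phi (f_2) \|_{L_2 (Q)} \leq M \| f_1 - f_2 \|_{L_2 (Q)}$, so an $\ve / M$-net of $\mc F$ transfers to an $\ve$-net of $\phi (\mc F)$ and the polynomial form survives after rescaling constants. For (f), I would cover the bounded index set $\Theta \subset \mbb R^d$, of diameter $D$, by Euclidean balls of radius $\ve / M$, which requires at most $(\mc C D M / \ve)^d$ balls; the hypothesis $\sup_{x} | f_{\theta_1} (x) - f_{\theta_2} (x) | \leq M \| \theta_1 - \theta_2 \|$ then forces the associated functions to lie within $\ve$ in supremum norm, hence within $\ve$ in every $L_2 (Q)$, yielding the bound with $\nu = d$.

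I do not anticipate a genuine analytic obstacle, since each assertion is routine once the VC machinery is in place; the only real care is combinatorial bookkeeping. The delicate point is that the constants $U, A, \nu$ produced in (d)--(f) must depend \emph{only} on the corresponding constants of the input classes and never on $Q$, because these bounds are later chained through repeated products and sums in the proof of Lemma \ref{lem_roc}. Tracking this propagation --- in particular checking that the relevant envelopes remain constant after each product or sum so that the uniform-in-$Q$ feature of part (a) is not lost --- is the main piece of work.
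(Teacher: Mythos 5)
Your proposal is correct, and for parts (a)--(c) it coincides with the paper, which simply cites Theorem 9.3, Lemma 9.6, and Lemma 9.9(viii) of \cite{ksrk} respectively; your sketches of why those results hold are accurate glosses on the same citations. The divergence is in (d)--(f): the paper disposes of these by citing Corollary 7(i), Proposition 5, and Lemma 26 of \cite{katoep}, whereas you reprove them from scratch via explicit net constructions (the splitting $f_1 f_2 - g_1 g_2 = f_1 (f_2 - g_2) + g_2 (f_1 - g_1)$ for products, transfer of $\ve/M$-nets under Lipschitz maps, and a Euclidean covering of the index set of cardinality $(\mc C D M / \ve)^d$ for the Lipschitz-in-parameter class). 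Your constructions are exactly the content of the cited results, so the two routes are mathematically equivalent; yours buys self-containedness at the cost of length, and you correctly identify the one point that actually matters downstream, namely that the output constants $U, A, \nu$ depend only on the input constants and envelopes and never on $Q$, since Lemma \ref{lem_roc} chains these preservation steps repeatedly. The only bookkeeping detail you elide in (d) is that the approximating functions $g_1, g_2$ must themselves obey the envelopes $F_1, F_2$ (take internal nets, or truncate), so that the bound $\| g_2 (f_1 - g_1) \|_{L_2 (Q)} \leq F_2 \ve$ is legitimate; this is standard and does not affect the conclusion.
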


\begin{proof}[Proof]
	(a) This is a direct consequence of Theorem 9.3 of \cite{kosorok2008introduction}. (b) This is Lemma 9.6 of \cite{kosorok2008introduction}. (c) This is Lemma 9.9(viii) of \cite{kosorok2008introduction}. (d) This is a direct consequence of Corollary 7(i) of \cite{katoep}.
	(e) This is a direct consequence of Proposition 5 of \cite{katoep}. (f) This is a direct consequence of Lemma 26 of \cite{katoep}. 
\end{proof}

\begin{lemma}\label{lem_roc}
Suppose that the assumption of Proposition \ref{roc psi} holds and $\Theta_n$ is defined, as in the proof of Proposition \ref{roc psi}. Then, \\
(a) $\sup_{\theta \in \Theta} |  P_n \Psi_n^{(1)} (\theta) - P \Psi_n^{(1)} (\theta)  | = o_p(1)$, \\
(b) $\sup_{\theta \in \Theta_n} \left|P \Psi_n^{(1)} (\theta) - 
	\int_{0}^{\tau} \mathbb E[Y(t) (U'\gamma_0) (X / h_n) \mc K' ((V + X' \psi)/h_n) e^{\eta (W, \theta_0)}] d\Lambda_0 (t) \right| = o(1)$, \\
(c) $\sup_{\theta \in \Theta} \left| P_n \left( \delta \frac{\widetilde P_n \Psi^{(3)}_n (\theta, T)}{\widetilde P_n \Psi^{(2)}_n (\theta, T)} \right) - 
	P_n  \left( \delta \frac{P \Psi^{(3)}_n (\theta, T)}{P \Psi^{(2)}_n (\theta, T)} \right) \right| = o_p (1)$, \\
(d) $\sup_{\theta \in \Theta_n} \left| P_n \left( \delta \frac{P \Psi^{(3)}_n (\theta, T)}{P \Psi^{(2)}_n (\theta, T)} \right)- P_n \left(\delta \frac{P \Psi^{(3)}_n (\theta, T)}{P \Psi^{(4)} (\theta_0, T)} \right) \right| = o_p (1)$, \\
(e) $\sup_{\theta \in \Theta} \left|P_n \left(\delta \frac{P \Psi^{(3)}_n (\theta, T)}{P \Psi^{(4)} (\theta_0, T)} \right) -
	P \left(\delta \frac{P \Psi^{(3)}_n (\theta, T)}{P \Psi^{(4)} (\theta_0, T)} \right) \right| = o_p (1)$, \\
(f) $\sup_{\theta \in \Theta_n} \left|P \left(\delta \frac{P \Psi^{(3)}_n (\theta, T)}{P \Psi^{(4)} (\theta_0; T)} \right) -
\int_{0}^{\tau} \mathbb E[Y (t) U' \gamma_0 (X / h_n) \mc K'((V + X' \psi)/h_n) e^{Z' \beta_0 + U' \gamma_0 \mc K ((V + X ' \psi)/h_n)}] d \Lambda_0 (t) \right| = o (1).$
\end{lemma}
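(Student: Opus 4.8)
The plan is to treat the six displays as two groups sharing a small toolkit. Parts (a), (c) and (e) are stochastic uniform-convergence statements that I would attack with the maximal inequality (2.5) of \cite{gine2001consistency}, fed by the covering-number bounds of Lemma \ref{empirical process}; parts (b), (d) and (f) are deterministic approximations that I would obtain from the compensator identity implied by Lemma \ref{martingale property}(b) together with parameter-smallness on $\Theta_n$. A computation I would use throughout is that, after the change of variables $\rho = (v + x'\psi)/h_n$ and Assumption \ref{kernel and smoothing 2}(b), moments of the form $\mathbb E[|X|^{a}|\mc K'((V + X'\psi)/h_n)|^{b}]$ with $b\ge 1$ are $O(h_n)$ uniformly in $\theta$ and $n$, since the Jacobian contributes a factor $h_n$ and the relevant conditional density is bounded by Assumption \ref{density}; in particular $\mathbb E[|X/h_n|\,|\mc K'((V+X'\psi)/h_n)|] = O(1)$ and $\mathbb E[(h_n\Psi^{(1)}_n(\theta))^2] = O(h_n)$. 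This is what tames the apparent $1/h_n$ blow-up once an expectation is taken.

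For the stochastic parts I would rescale to a bounded class. In (a), set $g_{n,\theta} := h_n\Psi^{(1)}_n(\theta)$; by Assumption \ref{covariates assumption}(a) and boundedness of $\mc K'$ this class has a constant envelope, while $P g_{n,\theta}^2 = O(h_n)$ by the moment bound above. Lemma \ref{empirical process}(b), (d) and (f) show $\{g_{n,\theta}:\theta\in\Theta\}$ is VC-type with covering numbers bounded uniformly in $n$, so (2.5) of \cite{gine2001consistency} gives $\sup_\theta|(P_n - P)g_{n,\theta}| = O_p\big(\sqrt{h_n(\log n)/n} + (\log n)/n\big)$; dividing by $h_n$ yields $\sup_\theta|(P_n - P)\Psi^{(1)}_n(\theta)| = O_p\big(\sqrt{(\log n)/(n h_n)} + (\log n)/(n h_n)\big) = o_p(1)$ by Assumption \ref{kernel and smoothing 2}(c). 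Part (c) is the same argument inside a ratio: I would write $a_n/b_n - a/b = (a_n-a)/b_n + a(b-b_n)/(b b_n)$, note the denominators $\widetilde P_n\Psi^{(2)}_n(\theta,t)$ and $P\Psi^{(2)}_n(\theta,t)$ are bounded below on $[0,\tau]$ by $\mc C\,n^{-1}\sum_j Y_j(\tau)\rightarrow_p\mbb P(T\ge\tau)>0$, then control $\sup_{\theta,t}|\widetilde P_n\Psi^{(3)}_n - P\Psi^{(3)}_n|$ exactly as in (a) (now also indexed by $t$ through the VC class $\{Y(t)\}$, the $1/h_n$-inflated term) and $\sup_{\theta,t}|\widetilde P_n\Psi^{(2)}_n - P\Psi^{(2)}_n|$ as a bounded VC-type Glivenko--Cantelli class. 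Part (e) is the mildest: $g_n(\theta,t) := P\Psi^{(3)}_n(\theta,t)/P\Psi^{(4)}(\theta_0,t)$ has numerator $O(1)$ and denominator bounded below, and as a function of $t$ it is of uniformly bounded variation while depending smoothly on $\theta$ through expectations smoothed by the continuous density of $V$; hence $\{(\delta,t)\mapsto\delta g_n(\theta,t):\theta\in\Theta\}$ is VC-type with constant envelope and the maximal inequality gives $O_p(\sqrt{(\log n)/n}) = o_p(1)$.

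For the deterministic parts I would repeatedly use that, since $C\le\tau$ a.s. under Assumption \ref{a survival time and a censoring time}(b) and $M_i$ of Lemma \ref{martingale property}(b) is a martingale, $\mathbb E[\delta f(W)] = \mathbb E[\int_0^\tau Y(t)e^{\eta(W,\theta_0)}f(W)\,d\Lambda_0(t)]$ for $W$-measurable $f$, and likewise $\mathbb E[\delta g(T)] = \int_0^\tau g(t)\,P\Psi^{(4)}(\theta_0,t)\,d\Lambda_0(t)$ for deterministic $g$. In (b) this rewrites $P\Psi^{(1)}_n(\theta)$ as the target integral but with $\gamma$ in place of $\gamma_0$; since $|\gamma-\gamma_0|\le r_n$ on $\Theta_n$ and $\mathbb E[|X/h_n|\,|\mc K'|] = O(1)$, the discrepancy is $O(r_n)=o(1)$. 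In (f) the second identity telescopes the denominator of $g_n(\theta,T)$ to give $P(\delta g_n(\theta,T)) = \int_0^\tau P\Psi^{(3)}_n(\theta,t)\,d\Lambda_0(t)$, after which replacing $(\beta,\gamma)$ by $(\beta_0,\gamma_0)$ in $e^{\eta_n}$ and in the prefactor costs $O(r_n)$ by Lipschitz continuity of the exponential on bounded sets and the same $O(1)$ moment bound. In (d) the numerators $P\Psi^{(3)}_n(\theta,t)$ are $O(1)$, both denominators are bounded below, and $\sup_{\theta\in\Theta_n,\,t}|P\Psi^{(2)}_n(\theta,t)-P\Psi^{(4)}(\theta_0,t)| = o(1)$ because the smoothing error $\mathbb E|\mc K((V+X'\psi)/h_n)-1\{V+X'\psi\ge0\}| = O(h_n)$ and the mismatch probability $\mbb P(1\{V+X'\psi\ge0\}\ne 1\{V+X'\psi_0\ge0\})\to0$ as $\psi\to\psi_0$ by the continuity of the conditional density of $V$ (Assumption \ref{covariates assumption}(b)); the reciprocal difference is then $o(1)$ and the whole expression $o_p(1)$.

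I expect the main obstacle to be the maximal-inequality step shared by (a) and (c). The classes there contain $\mc K'((V+X'\psi)/h_n)/h_n$, which is genuinely unbounded as $n\to\infty$, so a direct Glivenko--Cantelli argument fails; the resolution is the rescale-and-balance bookkeeping---constant envelope against an $O(h_n)$ variance---fed into (2.5) of \cite{gine2001consistency}, and it is exactly this inequality (rather than a cruder entropy bound) that lets $h_n$ approach $n^{-1}$ up to the logarithmic factor allowed by Assumption \ref{kernel and smoothing 2}(c). Verifying the VC-type property uniformly in $n$ for the $h_n$-indexed kernel argument, and keeping the envelope/variance accounting consistent across the four $\Psi^{(\cdot)}_n$ objects, is where the care concentrates; the deterministic parts then reduce to the compensator identity and bounded-class laws of large numbers.
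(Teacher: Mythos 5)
Your proposal follows essentially the same route as the paper: the Gin\'e--Guillou inequality (2.5) applied to the rescaled class $\{h_n\Psi^{(1)}_n(\theta)\}$ with constant envelope and variance $\sigma_n^2\asymp h_n$ for parts (a) and the $\Psi^{(3)}_n$ piece of (c), bounded VC-type Glivenko--Cantelli arguments for the remainder of (c) and for (e) (the paper formalizes your ``smooth dependence on $\theta$'' via the Lipschitz-in-index covering bound, Lemma \ref{empirical process}(f)), and the compensator identity plus $O(r_n)$ perturbation and the $\alpha$-splitting of the smoothing error for (b), (d), (f). The only point you defer that the paper must work for is the uniform-in-$n$ VC-type property of $\{\mc K'((V+X'\psi)/h_n):\psi\in\Theta^{\psi}\}$, which does not follow from Lemma \ref{empirical process}(b), (d), (f) alone since $\mc K'$ is neither monotone nor assumed Lipschitz; the paper obtains it from the bounded-1-variation decomposition $\mc K'=g\circ h$ ($h$ nondecreasing, $g$ Lipschitz) combined with Lemma \ref{empirical process}(c) and (e), which is exactly why Assumption \ref{kernel and smoothing 2}(a) is stated in terms of bounded 1-variation.
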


\begin{proof}
In the following, without loss of generality, we occasionally treat $X$ as a scalar, for brevity.

\underline{Part (a).}
We prove part(a) by applying inequality (2.5) in \cite{gine2001consistency} to $\{ h_n \Psi_n^{(1)} (\theta) : \theta \in \Theta  \}$. To this end, we  verify that, for a constant envelope $U$ and $0 < \ve < 1$, there exist $A$ and $\nu$ such that $\sup_{Q} N (\ve U, \{ h_n \Psi_n^{(1)} (\theta) : \theta \in \Theta  \}, L_2 (Q)) \leq (A/\ve)^{\nu}$. 
By Lemma \ref{empirical process}(d), it suffices to show that there exist $U^{\ast}$, $A^{\ast}$ and $\nu^{\ast}$ such that $\sup_{Q} N (\ve U^{\ast}, \mc H, L_2 (Q)) \leq (A^{\ast} /\ve)^{\nu^{\ast}}$ for $0 < \ve < 1$, with $\mc H = \{\delta \}, \{X\}, \{U'\gamma : \gamma \in \Theta^{\gamma} \}$ and $\{\mc K' ((V + X' \psi)/h_n) : \psi \in \Theta^{\psi} \}$, respectively. The inequality trivially holds for $\{ \delta \}$ and $\{ X \}$. 
Further, the inequality holds for $\{U'\gamma : \gamma \in \Theta^{\gamma} \}$, by Lemma \ref{empirical process}(a) and (b). 
For $\{\mc K' ((V + X ' \psi)/h_n) : \psi \in \Theta^{\psi} \}$, observe that $\{\mc (V + X ' \psi)/h_n : \psi \in \Theta^{\psi} \}$ is a VC-class with a VC-dimension at most $q + 3$, by Lemma \ref{empirical process}(b). By Assumption \ref{kernel and smoothing 2}(a), it follows from Lemma 3.6.11 and Exercise 3.6.14 of \cite{gine2021mathematical} that there exists a nondecreasing function $h: \mbb R \rightarrow \mbb [0, \nu_1 (\mc K')]$ and a Lipschitz continuous function $g: \mbb R \rightarrow \mbb R$ with $\sup_{u \in \mbb R} |g (u)| \leq \sup_{u \in \mbb R} |\mc K' (u)| < \infty$ such that $\mc K' = g \circ h$ 
where $\nu_1 (\mathcal K') := \sup \{ \sum_{i = 1}^n |\mathcal K' (x_i) - \mathcal K'(x_{i - 1})| : - \infty < x_0 < \dots < x_n < \infty, n \in \mathbb N \}$. 
Then, by Lemma \ref{empirical process}(c), $\{ h((V + X' \psi)/ h_n) : \psi \in \Theta^{\psi} \}$ is a VC-class with a VC-dimension at most $q + 3$ and with a constant envelope $\nu_1 (\mc K')$. Thus, there exist $A^{\circ}$ and $\nu^{\circ}$ independent of $n$ such that $\sup_Q N ( \ve \nu_1 (\mc K'), \{ h((V + X' \psi)/ h_n) : \psi \in \Theta^{\psi} \}, L_2 (Q)) \leq (A^{\circ} / \ve)^{\nu^{\circ}}$ holds for $0 < \ve < 1$ by Lemma \ref{empirical process}(a). 
Since $g$ is Lipschitz continuous and $\sup_{u \in \mbb R} |g (u)| \leq \sup_{u \in \mbb R} |\mc K' (u)|$, it follows from Lemma \ref{empirical process}(e) that
$\sup_Q N (\ve U^{\ast}, \{\mc K' ((V + X ' \psi)/h_n) : \psi \in \Theta^{\psi} \}, L_2 (Q)) \leq (A^{\ast} /\ve)^{\nu^{\ast}}$ holds for some $U^{\ast}, A^{\ast}$ and $\nu^{\ast}$, independent of $n$ where $0 < \ve < 1$. Consequently, we obtain the desired bound:  $\sup_{Q} N (\ve U, \{ h_n \Psi_n^{(1)} (\theta) : \theta \in \Theta  \}, L_2 (Q)) \leq (A/\ve)^{\nu}$.

By construction, we can choose $U$, $A$, and $\nu$ in the previous paragraph such that they do not depend on $n$. Therefore, the application of  inequality (2.5) in \cite{gine2001consistency} yields
\begin{equation} \label{ep_bound_2}
	\mathbb E \sup_{\theta \in \Theta} \left|  P_n \Psi_n^{(1)} (\theta) - P \Psi_n^{(1)} (\theta)  \right| \leq \mc C \left[ \frac{\nu U}{n h_n} \log \left( \frac{AU}{\sigma_n} \right) + \frac{\sqrt{\nu} \sigma_n}{\sqrt{n} h_n}
	\sqrt{\log \left( \frac{AU}{\sigma_n} \right)} \right],
\end{equation}
for every $n \in \mbb N$ with $\sigma_n^2 := \sup_{\theta \in \Theta} P (h_n \Psi_n^{(1)} (\theta))^2$. We proceed to show that $\sigma_n^2 \geq \mc C h_n$ for sufficiently large $n$. A straightforward calculation with the change of variable yields
\begin{align}
\sigma^2_n &\geq P (h_n \Psi_n^{(1)} (\theta_0))^2 \notag \\
& = h_n \int_{0}^{\tau} d \Lambda_0 (t) \int d F_R \int 1 \{t' \geq t \} x^2 (u'\gamma_0)^2 \mathcal K' (s)^2 e^{z'\beta_0 + u'\gamma_0 1 \{s h_n \geq 0 \} } f_{S|R} (sh_n | r) ds \notag \\
& \geq \mathcal C h_n \int_{0}^{\tau} 1 \{t' \geq t \} x^2 (u'\gamma_0)^2 \int_{- \infty}^{\infty} \mathcal K' (s)^2 (f_{S|R} (0 | r) + |s|O(h_n)) dsdF_R d\Lambda_0(t) \notag \\
& \geq \mathcal C h_n \int_{0}^{\tau} \mathbb E[Y(t) X^2 (U'\gamma_0)^2 f_{S|R} (0|R)] d\Lambda_0 (t) \left( \int_{- \infty}^{\infty} \mathcal K'(s)^2 ds \right) + O(h_n^2), \notag
\end{align}
where we use the mean value theorem for the second inequality and $\int |s| \mathcal K'(s)^2 ds < \infty$ for the third inequality.
It now follows that, for sufficiently large $n$, $\sigma_n^2 \geq \mc C h_n$. Meanwhile, it follows from BA and Assumption \ref{kernel and smoothing 2}(b) that $\sigma_n^2 \leq \mc C h_n$. Therefore, by Assumption \ref{kernel and smoothing 2}(c), the right-hand side of (\ref{ep_bound_2}) converges to zero, and part (a) follows.

\underline{Part (b).}
Observe that
\begin{align}\label{step1b_1}
	&\sup_{\theta \in \Theta_n} \left|P \Psi_n^{(1)} (\theta) - 
	\int_{0}^{\tau} \mathbb E[Y(t) (U'\gamma_0) (X / h_n) \mc K' ((V + X' \psi)/h_n) e^{\eta_0 (W, \theta_0)}] d\Lambda_0 (t) \right| \notag \\
  = & \sup_{\theta \in \Theta_n} \left| \int \mathbb E[Y(t) U'(\gamma - \gamma_0) (X/h_n) \mc K' ((V + X' \psi)/h_n) e^{\eta_0 (W, \theta_0)}] d\Lambda_0 (t) \right| \notag \\
	\leq & r_n \mc C \sup_{\theta \in \Theta_n} \mathbb E[(1/h_n)  \mc K' ((V + X' \psi)/h_n)],
\end{align}
where the inequality follows from the Cauchy-Schwarz inequality (CS), BA, and the definition of $\Theta_n$.

Here, we note that, by the change of variable,
\begin{align}\label{kernel inequality}
	\sup_{\theta \in \Theta_n} \mathbb E[(1/h_n)  \mc K' ((V + X' \psi)/h_n)] = \sup_{\theta \in \Theta_n} \int dF_{R} \int \mc K' (s) f_{S | R} (s h_n - x' (\psi - \psi_0)) ds < \infty,
\end{align}
where the last inequality follows from Assumption \ref{kernel and smoothing 2}(b) and \ref{density}. Hence, the right-hand side of (\ref{step1b_1}) converges to zero. This completes the proof of part (b).

\underline{Part (c).}
Observe that
\begin{align}\label{step1c_1}
	&\sup_{\theta \in \Theta} \left| P_n \left( \delta \frac{\widetilde P_n \Psi^{(3)}_n (\theta, T)}{\widetilde P_n \Psi^{(2)}_n (\theta, T)} \right) - 
	P_n  \left( \delta \frac{P \Psi^{(3)}_n (\theta, T)}{P \Psi^{(2)}_n (\theta, T)} \right) \right| \notag \\
	\leq &\frac{\mc C}{P_n (Y(\tau))\mathbb E[Y(\tau)]} \sup_{\theta \in \Theta, t \in [0, \tau]} 
 \left|P \Psi^{(2)}_n (\theta, t)P_n \Psi^{(3)}_n (\theta, t) -  P_n \Psi^{(2)}_n (\theta, t) P \Psi^{(3)}_n (\theta, t)\right| \notag \\
	\leq &\frac{\mc C}{P_n (Y(\tau)) \mathbb E[Y(\tau)]} \left[ \left(\sup_{\theta \in \Theta, t \in [0, \tau]} P_n \Psi^{(3)}_n (\theta, t)\right) \mc A_n^{(1)}
 +  \left(\sup_{\theta \in \Theta, t \in [0, \tau]} P_n \Psi^{(2)}_n (\theta, t)\right) \mc A_n^{(2)} \right],
\end{align}
where $\mc A_n^{(1)} := \sup_{\theta \in \Theta, t \in [0, \tau]} |P_n \Psi^{(2)}_n (\theta, t) - P \Psi^{(2)}_n (\theta, t) |$ and $\mc A_n^{(2)} : = \sup_{\theta \in \Theta, t \in [0, \tau]} |P_n \Psi^{(3)}_n (\theta, t) - P \Psi^{(3)}_n (\theta, t) |$. Note that $\sup_{\theta \in \Theta, t \in [0, \tau]} P_n \Psi^{(2)}_n (\theta, t)$ is bounded and $\sup_{\theta \in \Theta, t \in [0, \tau]} |P_n \Psi^{(3)}_n (\theta, t)| \leq \mc A_n^{(2)} + \sup_{\theta \in \Theta, t \in [0, \tau]} |P \Psi^{(3)}_n (\theta, t)| $ where 
$\sup_{\theta \in \Theta, t \in [0, \tau]} |P \Psi^{(3)}_n (\theta, t)|$ is uniformly bounded in $n$ by the proof of part (b). Moreover, $P_n Y(\tau) \rightarrow_p \mathbb E[Y(\tau)] > 0$ by the law of large numbers and Assumption \ref{a survival time and a censoring time}(b). Hence, it suffices to show that $\mc A_n^{(1)}$ and $\mc A_n^{(2)}$ converge
to zero in probability for the stated result.

First, for the convergence of $\mc A_n^{(1)}$, it suffices to show that $ \mathcal A^{*} := \{Y(t) e^{Z'\beta + U'\gamma \mathcal K ((V + X'\psi)/h)} : \theta \in \Theta, h \in (0, \infty), t \in [0, \tau] \}$ is Glivenko-Cantelli. Recall that $\mc K$ is monotone by Assumption \ref{kernel and smoothing}(a). 
Thus, $\{ \mc K((V + X' \psi)/h) : \psi \in \Theta^{\psi}, h \in (0, \infty) \}$ is a VC class, by Lemma \ref{empirical process}(b) and (c). 
It follows from Lemma \ref{empirical process}(a), (b), and (d) that there exists $0 < U, A, \nu < \infty$ such that $\sup_Q N(\ve U, \{Z'\beta + U'\gamma \mathcal K((V + X'\psi)/h) : \theta \in \Theta, h \in (0, \infty) \}, L_2 (Q)) \leq (A / \ve)^{\nu}$.
It must be noted that $Z'\beta + U'\gamma \mathcal K((V + X'\psi)/h)$ is bounded by BA. 
Since the exponential function is Lipschitz continuous on a bounded interval and $\{Y(t) : t \in [0, \tau] \}$ is VC-class, there exists $0 < U, A, \nu < \infty$ such that $\sup_Q N (\ve U, \mathcal A^*, L_2 (Q) ) \leq (A / \ve)^{\nu}$ for $0 < \ve < 1$ by Lemma \ref{empirical process}(a), (d), and (e).
Hence, $\mathcal A^{(1)}_n \rightarrow_p 0$.

For $\mc A_n^{(2)}$, it follows from a similar argument as that in the previous paragraph and that in the proof of part (a), in conjunction with Lemma \ref{empirical process}(a), (b), and (d), that 
there exist a constant envelope $U^{\circ}$, $A^{\circ}$, and $\nu^{\circ}$, independent of $n$ such that $\sup_Q N (\ve U^{\circ}, \{ h_n \Psi^{(3)}_n (\theta, t) : \theta \in \Theta, t \in [0, \tau] \}, L_2(Q)) \leq (A^{\circ} / \ve)^{\nu^{\circ}}$ for $0 < \ve < 1$. Therefore, applying inequality (2.5) in \cite{gine2001consistency}, similar to the argument in part (a), yields $\mc A_n^{(2)} \rightarrow_p 0$. This completes part (c). 

\underline{Part (d).}
Observe that
\begin{align}\label{step1d_1}
	\sup_{\theta \in \Theta_n} \left| P_n \left( \delta \frac{P \Psi^{(3)}_n (\theta, T)}{P \Psi^{(2)}_n (\theta, T)} \right)- P_n \left(\delta \frac{P \Psi^{(3)}_n (\theta, T)}{P \Psi^{(4)} (\theta_0, T)} \right) \right|
	\leq \mc C \sup_{\theta \in \Theta_n} P_n  |P \Psi^{(2)}_n (\theta, T) - P \Psi^{(4)} (\theta_0, T)|,
\end{align}
by BA and the argument in part (b). It follows from BA and CS that the right-hand side of (\ref{step1d_1}) is bounded by $\mc C \sup_{\theta \in \Theta_n} \left( \| \beta - \beta_0 \| \mathbb E[\|Z\|] + \| \gamma - \gamma_0 \| \mathbb E[\|U\|] + \mathbb E[|\mc K((V + X' \psi)/h_n) - 1 \{ V + X' \psi_0  \geq 0\}|]\right) = \mc C \sup_{\theta \in \Theta_n} \mathbb E[|\mc K((V + X' \psi)/h_n) - 1 \{ V + X' \psi_0  \geq 0\}|] + o(1)$. Fix $\alpha > 0$. Then, observe that
\begin{align}\label{step1d_2}
	&\sup_{\theta \in \Theta_n}\mathbb E[|\mc K ((V + X' \psi)/ h_n) - 1\{ V + X' \psi_0 \geq 0 \}|] \notag \\
	\leq &\sup_{\theta \in \Theta_n}\mathbb E[|\mc K ((V + X' \psi)/ h_n) - 1\{ V + X' \psi_0 \geq 0 \}| 1 \{ |V + X' \psi| \geq \alpha\}] + \mc C \sup_{\theta \in \Theta} \mbb P (|V + X' \psi| < \alpha ). 
\end{align}
The first term on the right-hand side of (\ref{step1d_2}) equals $\sup_{\theta \in \Theta_n}\mathbb E[|\mc K ((V + X' \psi)/ h_n) - 1\{ V + X' \psi \geq 0 \}| 1 \{ |V + X' \psi| \geq \alpha\}]$ for sufficiently large $n$ by the construction of $\Theta_n$. It must be noted that $\sup_{\theta \in \Theta_n}\mathbb E[|\mc K ((V + X' \psi)/ h_n) - 1\{ V + X' \psi \geq 0 \}| 1 \{ |V + X' \psi| \geq \alpha\}] \leq \max \{ \mc K(- \alpha / h_n), |1 - \mc K (\alpha / h_n)| \}$, which converges to zero by Assumption \ref{kernel and smoothing}(a). Since $\lim_{\alpha \downarrow 0} \sup_{\theta \in \Theta} \mbb P (|V + X' \psi| < \alpha ) = 0$, by the proof of Lemma 4 of \cite{horowitz1992smoothed}, $\sup_{\theta \in \Theta_n} \mathbb E[|\mc K((V + X' \psi)/h_n) - 1 \{ V + X' \psi_0  \geq 0\}|] = o(1)$ follows. This completes the proof of part (d).

\underline{Part (e).}
For sufficiently large $n$, $h_n \in (0, 1)$. Thus, it suffices to show that
\begin{equation}\label{part_e_1}
	\sup_{\theta \in \Theta, h \in (0, 1)} \left|P_n \left(\delta \frac{P \widetilde \Psi^{(3)} (\theta, h,  T)}{P \Psi^{(4)} (\theta_0, T)} \right) -
	P \left(\delta \frac{P \widetilde \Psi^{(3)} (\theta, h,  T)}{P \Psi^{(4)} (\theta_0, T)} \right) \right| = o_p(1),
\end{equation}
where $\widetilde \Psi^{(3)} (\theta, h, t) := \{ Y(t) (U'\gamma) (X/h) \mathcal K' ((V + X'\psi)/h) e^{Z'\beta + U'\gamma \mathcal K((V + X'\psi)/h)} : \theta \in \Theta, h \in (0, 1) \}$.
Take $\theta_1, \theta_2 \in \Theta$, and $h_1, h_2 \in (0, 1)$ arbitrarily. Then, for any realization $t \in [0, \tau]$ of $T$,
\begin{align}\label{part_e_2}
	\left|\delta \frac{P \widetilde \Psi^{(3)} (\theta_1, h_1, t)}{P \Psi^{(4)} (\theta_0, t)} - \delta \frac{P \widetilde \Psi^{(3)} (\theta_2, h_2, t)}{P \Psi^{(4)} (\theta_0, t)} \right| \leq \mc C  |P \widetilde \Psi^{(3)} (\theta_1, h_1, t) -  P \widetilde \Psi^{(3)} (\theta_2, h_2, t) | \leq \mc C (\mc A^{(1)} + \mc A^{(2)}),
\end{align}
where
\begin{align*}
	\mc A^{(1)} &:= \left|\mathbb E\left[Y(t) U' (\gamma_1 - \gamma_2) (X / h_1) \mc K' ((V + X' \psi_1)/h_1) e^{Z' \beta_1 + U'\gamma_1 \mc K((V + X' \psi_1)/h_1)}\right]\right|, \\
	\mc A^{(2)} &:= \left|\mathbb E \left[Y(t) U' \gamma_2 (X / h_1) \mc K' ((V + X' \psi_1)/h_1) e^{Z' \beta_1 + U'\gamma_1 \mc K((V + X' \psi_1)/h_1)} \right] \right. \\ &\left. -
	\mathbb E\left[Y(t) U' \gamma_2 (X / u_2) \mc K' ((V + X' \psi_2)/h_2) e^{Z' \beta_2 + U'\gamma_2 \mc K((V + X' \psi_2)/h_2)}\right] \right|.
\end{align*}
It follows from the argument in part (b), in conjunction with Assumption \ref{covariates assumption}(a) and CS, that $\mc A^{(1)}$ is bounded by $\mc C \| \gamma_1 - \gamma_2\|$.

Meanwhile, by Lipschitz continuity of the exponential function and Assumptions \ref{covariates assumption}(a) and \ref{density}, the change in variable gives that $\mc A^{(2)}$ is bounded by
\begin{align}\label{part_e_3}
	&\mc C \int dF_{R} \int |\mc K'(s)| \left| e^{z\beta_1 + u'\gamma_1 \mc K(s)} f_{S | R} (s h_1 - x' (\psi_1 - \psi_0)| r) - e^{z \beta_2 + u'\gamma_2 \mc K(s)} f_{S | R} (s h_2 - x' (\psi_2 - \psi_0)| r)\right| ds \notag \\
	\leq & \mc C \left[ \int dF_{R} \int |\mc K'(s)| \cdot |z'(\beta_1 - \beta_2) + \mc K(s) u'(\gamma_1 - \gamma_2)|ds \right. \notag \\
	& \left. + \int dF_{R} \int |\mc K'(s)| \cdot |f_{S | R} (s h_1 - x' (\psi_1 - \psi_0)| r) -  f_{S | R} (s h_2 - x' (\psi_2 - \psi_0)| r)|ds \right].
\end{align}
By CS and the mean value theorem, in conjunction with Assumption \ref{density}, the last term of (\ref{part_e_3}) is bounded by $\mc C \left[ \| \beta_1 - \beta_2 \| + \| \gamma_1 - \gamma_2\| + \int dF_{R} \int | \mc K'(s) | \left\{|s| \cdot |h_1 - h_2| + \| \psi_1 - \psi_2 \| \right\}ds \right]$. Hence, it follows from Assumption \ref{kernel and smoothing 2}(b) that $\mc A^{(2)}$ is bounded by $ \mc C \left\|(\theta_1', h_1) - (\theta_2', h_2)\right\|$ where $\mc C$ does not depend on the value of $t$.

Based on the above argument on $\mc A^{(1)}$ and $\mc A^{(2)}$, the last term in (\ref{part_e_2}) is bounded by $M \| (\theta_1', h_1) - (\theta_2', h_2) \|$ for some $M > 0$, irrespective of the value of $t$. 
Since $\Theta$ is compact, Lemma \ref{empirical process}(f) implies that $\{ \delta P \widetilde \Psi^{(3)} (\theta, h, T)/ P \Psi^{(4)} (\theta_0, T): \theta \in \Theta, h \in (0, 1) \}$ is Glivenko-Cantelli. Thus, (\ref{part_e_1}) holds and this completes the proof of part (e). 

\underline{Part (f).}
Observe that $P \left(\delta P \Psi^{(3)}_n (\theta, T) / P \Psi^{(4)} (\theta_0, T) \right) = \mathbb E\left[\int_{0}^{\tau}  \frac{P \Psi^{(3)}_n (\theta, t)}{P \Psi^{(4)} (\theta_0, t)} Y(t)e^{\eta_0 (W, \theta_0)}d\Lambda_0 (t) \right] = \int_{0}^{\tau} P \Psi^{(3)}_n (\theta, t) d\Lambda_0 (t)$ where we use Lemma \ref{martingale property} and Fubini's Theorem for the first and equalities, respectively. Hence, we obtain the following bound:
\begin{align}\label{part_f_1}
	&\sup_{\theta \in \Theta_n} \left|P \left(\delta \frac{P \Psi^{(3)}_n (\theta, T)}{P \Psi^{(4)} (\theta_0, T)} \right) -
\int_{0}^{\tau} \mathbb E[Y (t) U' \gamma_0 (X / h_n) \mc K' \left( \frac{V + X' \psi}{h_n} \right) e^{Z' \beta_0 + U' \gamma_0 \mc K \left( \frac{V + X' \psi}{h_n} \right)}] d \Lambda_0 (t) \right| \notag \\
\leq & \sup_{\theta \in \Theta_n} \left| \int_{0}^{\tau} \mathbb E [Y(t) U'(\gamma - \gamma_0) (X / h_n) \mc K'((V + X' \psi)/h_n) e^{Z' \beta + U' \gamma \mc K ((V + X ' \psi)/h_n)}] d\Lambda_0 (t) \right| \notag \\
+ &\sup_{\theta \in \Theta_n} \left| \int_{0}^{\tau} \mathbb E \left[Y(t) U'\gamma_0 (X / h_n) \mc K' \left( \frac{V + X' \psi}{h_n} \right) \left(e^{Z' \beta + U' \gamma \mc K \left( \frac{V + X' \psi}{h_n} \right)} -  e^{Z' \beta_0 + U' \gamma_0 \mc K \left( \frac{V + X' \psi}{h_n} \right)}\right) \right] d\Lambda_0 (t) \right|.
\end{align}
Based on the argument in part (b), in conjunction with CS and Assumptions \ref{the parameter space}(a) and \ref{covariates assumption}(a), the first term on the right-hand side of (\ref{part_f_1}) is bounded by $\mc C r_n$, which is convergent to zero. By a similar argument, the second term converges to zero. This completes the proof of part (f).
\end{proof} 

\begin{lemma}\label{lem: matrix}
Let $X$ be a $\mbb R^{k}$-valued bounded random variable, with $\mathbb E[X X']$ being positive definite. 
Then, for any vector $u$ in $\mathbb R^k$ such that $\| u \| = 1$, we have $P (|X'u| > c_1) > c_2$ for some positive constants $c_1$ and $c_2$ that do not depend on $u$.
\end{lemma}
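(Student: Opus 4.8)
The plan is to exploit the positive definiteness of $\mathbb{E}[XX']$ to obtain a uniform lower bound on the second moment of the scalar projection $X'u$, and then to convert this second-moment bound into a lower bound on the tail probability $\mbb P(|X'u| > c_1)$ by a truncation (Paley--Zygmund-type) argument. Two uniform constants drive the whole proof. First, since $X$ is bounded, there is a finite $B > 0$ with $\|X\| \leq B$ almost surely, so that $|X'u| \leq \|X\| \, \|u\| \leq B$ almost surely for every unit vector $u$ by the Cauchy--Schwarz inequality. Second, since $\mathbb{E}[XX']$ is positive definite, its smallest eigenvalue $\underline{\lambda} := \min_{\|u\| = 1} u' \mathbb{E}[XX'] u$ is strictly positive, so that $\mathbb{E}[(X'u)^2] = u' \mathbb{E}[XX'] u \geq \underline{\lambda} > 0$ for every unit vector $u$. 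The key point is that neither $B$ nor $\underline{\lambda}$ depends on the particular choice of $u$.

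Next I would fix a unit vector $u$, set $Y := X'u$, and split its second moment according to whether $|Y|$ exceeds a threshold $c_1 > 0$:
\begin{align*}
	\underline{\lambda} \leq \mathbb{E}[Y^2] = \mathbb{E}[Y^2 1\{ |Y| \leq c_1 \}] + \mathbb{E}[Y^2 1\{ |Y| > c_1 \}] \leq c_1^2 + B^2 \, \mbb P(|Y| > c_1),
\end{align*}
where the first term is controlled using $|Y| \leq c_1$ on the relevant event and the second using $|Y| \leq B$ almost surely. Rearranging this inequality gives $\mbb P(|Y| > c_1) \geq (\underline{\lambda} - c_1^2) / B^2$.

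Finally I would choose the threshold small enough that the right-hand side is strictly positive: taking $c_1 := \sqrt{\underline{\lambda}/2}$ yields $\mbb P(|X'u| > c_1) \geq \underline{\lambda} / (2 B^2) =: c_2 > 0$. Since $c_1$ and $c_2$ are constructed solely from $\underline{\lambda}$ and $B$, the bound holds simultaneously for every unit vector $u$, which is precisely the assertion. I do not expect a genuine obstacle in this argument; the only point that requires a moment of care is verifying that the constants are truly uniform in $u$, and this is immediate because $\underline{\lambda} = \min_{\|u\| = 1} u' \mathbb{E}[XX'] u$ and the envelope $B$ are both independent of $u$.
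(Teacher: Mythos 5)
Your argument is correct, and every step checks out: the uniform lower bound $\mathbb E[(X'u)^2]=u'\mathbb E[XX']u\ge\underline\lambda>0$ from the smallest eigenvalue, the almost-sure envelope $|X'u|\le B$, the truncation $\underline\lambda\le c_1^2+B^2\,\mbb P(|X'u|>c_1)$, and the choice $c_1=\sqrt{\underline\lambda/2}$ all go through, with constants manifestly independent of $u$. The route differs from the paper's in one respect: the paper invokes the Paley--Zygmund inequality $\mbb P(|X'u|^2>a\,\mathbb E[|X'u|^2])\ge(1-a)^2(\mathbb E[|X'u|^2])^2/\mathbb E[|X'u|^4]$ with $a=0.5$, using boundedness of $X$ only to get a uniform bound $C$ on the fourth moment, which yields $c_2=\underline\lambda^2/(4C)$; you instead prove the needed tail bound directly by splitting $\mathbb E[(X'u)^2]$ at the threshold $c_1$, which is more elementary and self-contained (no external inequality to cite) but leans on the almost-sure bound $|X'u|\le B$ rather than a moment ratio, giving $c_2=\underline\lambda/(2B^2)$. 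Under the boundedness hypothesis of the lemma the two are interchangeable; the Paley--Zygmund version would survive if boundedness were weakened to a uniform fourth-moment condition, while yours would not, but that generality is not needed here. One cosmetic point: the lemma asserts the strict inequality $\mbb P(|X'u|>c_1)>c_2$, whereas your truncation delivers $\ge\underline\lambda/(2B^2)$; simply take $c_2:=\underline\lambda/(4B^2)$ (the paper's Paley--Zygmund bound has the same $\ge$-versus-$>$ issue, resolved the same way).
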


\begin{proof}[Proof]
Let $\lambda$ be the smallest eigenvalue of $\mathbb E[X X']$. By the positive definiteness of $\mathbb E[X X']$, $\mathbb E[|X' u|^2] = u' \mathbb E[X X'] u \geq \lambda > 0$, for any $u \in \mbb R^k$ such that $\| u \| = 1$. It follows from the Paley-Zygmund inequality (e.g., equation (11) of \cite{petrov2007lower}) that $P(|X'u|^2 > a \mathbb E[|X'u|^2]) \geq (1 - a)^2 \left(\mathbb E[|X'u|^2]\right)^2 / \mathbb E[|X'u|^4]$ for any $0 \leq a \leq 1$. By the boundedness of $X$, there exists a finite positive constant $C$ such that $\mathbb E[|X'u|^4] < C$ uniformly in $u$. Therefore, setting $a = 0.5$ so that $c_1 = (0.5)^{1/2} \lambda^{1/2}$ and $c_2 = (0.5)^2 \lambda^{2}/C$ gives the desired result.
\end{proof}

\begin{lemma}\label{error indicator kernel}
	Suppose that the assumptions of Proposition \ref{a score} hold. Then,
	\[ 
		n^{- 1/2} \sum_{i = 1}^n |1 \{V_i + X_i' \psi_0 \geq 0 \} - \mc K ((V_i + X_i' \psi_0)/ h_n)| = o_p (1).
	\]
\end{lemma}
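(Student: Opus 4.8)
The plan is to exploit the nonnegativity of the summand and reduce the entire statement to a first-moment bound via Markov's inequality. Writing $g_n(s) := |1\{s \geq 0\} - \mc K(s/h_n)|$ and $S_i := V_i + X_i'\psi_0$, the quantity in question is $n^{-1/2}\sum_{i=1}^n g_n(S_i) \geq 0$, so for any $\ve > 0$ Markov's inequality gives
\[
\mathbb P\left( n^{-1/2}\sum_{i=1}^n g_n(S_i) > \ve \right) \leq \frac{n^{-1/2}\sum_{i=1}^n \mathbb E[g_n(S_i)]}{\ve} = \frac{n^{1/2}\,\mathbb E[g_n(S)]}{\ve},
\]
using that the $S_i$ are identically distributed. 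Hence it suffices to show $n^{1/2}\,\mathbb E[g_n(S)] \to 0$, that is, $\mathbb E[g_n(S)] = o(n^{-1/2})$.

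Next I would bound $\mathbb E[g_n(S)]$ through the marginal density of $S$. By Assumption \ref{density}, the conditional density $f_{S|R}(\cdot\,|\,r)$ is uniformly bounded, so the marginal density $f_S(s) = \int f_{S|R}(s\,|\,r)\,dF_R(r)$ satisfies $\|f_S\|_\infty \leq \sup_{(s,r)} f_{S|R}(s\,|\,r) < \infty$. Performing the change of variable $s = h_n u$ then yields
\[
\mathbb E[g_n(S)] = \int |1\{s \geq 0\} - \mc K(s/h_n)|\, f_S(s)\,ds = h_n \int |1\{u \geq 0\} - \mc K(u)|\, f_S(h_n u)\,du \leq \|f_S\|_\infty\, h_n \int |1\{u \geq 0\} - \mc K(u)|\,du.
\]

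The remaining step is to verify that $I := \int_{-\infty}^{\infty} |1\{u \geq 0\} - \mc K(u)|\,du$ is finite. Since $\mc K$ is nondecreasing with $\mc K(-\infty) = 0$ and $\mc K(\infty) = 1$ (Assumption \ref{kernel and smoothing}(a)), we have $0 \leq \mc K \leq 1$, so the integrand equals $1 - \mc K(u)$ for $u \geq 0$ and $\mc K(u)$ for $u < 0$. Writing $1 - \mc K(u) = \int_u^{\infty} \mc K'(t)\,dt$ and $\mc K(u) = \int_{-\infty}^u \mc K'(t)\,dt$ and applying Fubini's theorem (legitimate because $\mc K' \geq 0$) gives $I = \int_{-\infty}^{\infty} |t|\,\mc K'(t)\,dt$, which is finite by Assumption \ref{kernel and smoothing 2}(b) (the case $i = 1$, $j = 1$). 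Combining the three displays, $\mathbb E[g_n(S)] \leq \mc C\, h_n$, whence $n^{1/2}\,\mathbb E[g_n(S)] \leq \mc C\, n^{1/2} h_n \to 0$ by the rate condition $n^{1/2} h_n \to 0$ of Assumption \ref{kernel and smoothing 2}(c), and the claim follows.

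I do not anticipate a serious obstacle: the result is essentially a computation. The only point requiring care is the integrability of $I$, where one must pass from the tail behaviour of $\mc K$ to an integral of $\mc K'$; the Fubini identity $I = \int |t|\,\mc K'(t)\,dt$ cleanly reduces this to the moment hypothesis already imposed on $\mc K'$ in Assumption \ref{kernel and smoothing 2}(b), so that no separate tail condition on $\mc K$ itself is needed.
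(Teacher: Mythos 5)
Your proof is correct and follows essentially the same route as the paper's: reduce to a first-moment bound, change variables $s = h_n u$, use the uniform bound on the conditional density from Assumption \ref{density}, and conclude from $n^{1/2} h_n \rightarrow 0$. The only difference is how you establish $\int |1\{s \geq 0\} - \mc K(s)|\, ds < \infty$: you use the Fubini identity reducing it to $\int |t| \mc K'(t)\, dt$, whereas the paper integrates by parts and disposes of the boundary terms via L'H\^{o}pital's rule and the condition $\lim_{s \rightarrow \pm \infty} s^2 \mc K'(s) = 0$; your variant is marginally cleaner in that it does not need that boundary condition, only the everywhere-differentiability and integrability of $\mc K'$ that justify writing $1 - \mc K(u) = \int_u^{\infty} \mc K'(t)\, dt$.
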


\begin{proof}[Proof]
	Observe that
\begin{align}\label{a score 12}
	&\mathbb E \left[ n^{- 1/2} \sum_{i = 1}^n |1 \{V_i + X_i' \psi_0 \geq 0 \} - \mc K ((V_i + X_i' \psi_0)/ h_n)| \right] \notag \\
	= &n^{1/2} \int d F_R \int |1 \{ s \geq 0 \} - \mc K (s/h_n)| f_{S | R} (s | r) ds \notag \\
	= &n^{1/2} h_n  \int d F_R \int |1 \{ s h_n \geq 0 \} - \mc K (s)| f_{S | R} (s h_n | r) ds \notag \\
	\leq &\mc C \sqrt{n} h_n  \int d F_R \int |1 \{ s \geq 0 \} - \mc K (s)| ds,
\end{align}
where the second equality follows from the change in variable and the inequality from Assumption \ref{density}. Here, it must be noted that integration by parts gives
\begin{align*}
	\int |1 \{ s \geq 0 \} - \mc K (s)| ds &= \int_{ - \infty}^{0} \mc K (s) ds + \int_{0}^{\infty} (1 - \mc K (s)) ds \\
	&= \lim_{R \rightarrow - \infty } \left[ s \mc K (s) \right]_{R}^{0} + \lim_{R \rightarrow \infty} \left[s (1 - \mc K (s)) \right]_{0}^{R} + \int_{- \infty}^{\infty} |s| \mc K' (s) ds.
\end{align*}
By L'H\^{o}pital's rule and Assumption \ref{kernel and smoothing 2}(b), $\lim_{R \rightarrow - \infty} R \mc K (R) = \lim_{R \rightarrow - \infty} - R^2 \mc K' (R) = 0$. By a similar argument, we have $\lim_{R \rightarrow \infty}  R (1 - \mc K (R)) = 0$. Hence, by Assumption \ref{kernel and smoothing 2}(b), $\int |1 \{ s \geq 0 \} - \mc K (s)| ds < \infty$. 
It now follows from (\ref{a score 12}) and Assumption \ref{kernel and smoothing 2}(c) that the right-hand side of (\ref{a score 12}) converges to zero. Since $L_1$ convergence implies convergence in probability, we complete the proof.
\end{proof}

\begin{lemma}\label{error two indicator}
	Suppose that the assumptions of Proposition \ref{a score} hold. Then, for any consistent estimator $\widetilde \psi_n$ of $\psi_0$,
	\[ 
		\frac{1}{n} \sum_{i = 1}^n |1 \{ V_i + X_i'\widetilde \psi_n \geq 0 \} - 1 \{V_i + X_i'\psi_0 \geq 0 \}| = o_p(1).
	\]
\end{lemma}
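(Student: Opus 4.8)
The plan is to show that each summand is nonzero only when $S_i := V_i + X_i'\psi_0$ falls into a shrinking neighborhood of the origin whose half-width is controlled by $\|\widetilde\psi_n - \psi_0\|$, and then to combine the consistency of $\widetilde\psi_n$ with a law-of-large-numbers argument, exploiting that $S$ has a bounded density so that the probability mass near zero vanishes.

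First I would establish the deterministic pointwise bound
\[
	|1\{V_i + X_i'\widetilde\psi_n \geq 0\} - 1\{V_i + X_i'\psi_0 \geq 0\}| \leq 1\{|S_i| \leq B\|\widetilde\psi_n - \psi_0\|\},
\]
where $B := \sup \|X\| < \infty$ by Assumption \ref{covariates assumption}(a). Indeed, the left-hand side equals one only when $V_i + X_i'\widetilde\psi_n$ and $S_i$ lie on opposite sides of $0$, and in that case $|S_i| \leq |X_i'(\widetilde\psi_n - \psi_0)| \leq B\|\widetilde\psi_n - \psi_0\|$ by the Cauchy--Schwarz inequality. Averaging over $i$, it therefore suffices to prove that $n^{-1}\sum_{i = 1}^n 1\{|S_i| \leq B\|\widetilde\psi_n - \psi_0\|\} = o_p(1)$.

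The main technical point is that the threshold $B\|\widetilde\psi_n - \psi_0\|$ is itself random and data-dependent, so the strong law of large numbers cannot be applied directly. I would decouple this randomness by domination: fix $\varepsilon > 0$; since the marginal law of $S$ admits a bounded density (obtained by integrating out the bounded conditional density in Assumption \ref{density}, giving $\mathbb P(|S| \leq a) \leq \mathcal C a$), I can choose $\delta > 0$ so small that $\mathbb P(|S| \leq B\delta) < \varepsilon/2$. On the event $\{\|\widetilde\psi_n - \psi_0\| \leq \delta\}$ the random threshold is dominated by the deterministic one, so that
\[
	n^{-1}\sum_{i = 1}^n 1\{|S_i| \leq B\|\widetilde\psi_n - \psi_0\|\} \leq n^{-1}\sum_{i = 1}^n 1\{|S_i| \leq B\delta\}.
\]
By the strong law of large numbers the right-hand side converges in probability to $\mathbb P(|S| \leq B\delta) < \varepsilon/2$, while $\mathbb P(\|\widetilde\psi_n - \psi_0\| > \delta) \to 0$ by the consistency of $\widetilde\psi_n$. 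Splitting the probability of $\{n^{-1}\sum_i 1\{|S_i| \leq B\|\widetilde\psi_n - \psi_0\|\} > \varepsilon\}$ across these two events and letting $n \to \infty$ then gives the claim.

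I expect no serious obstacle in this lemma; the only genuine subtlety is the random-threshold issue just described, which the consistency-plus-domination step resolves cleanly. The remaining ingredient, that $S$ places vanishing mass near $0$, is immediate from the boundedness of its density under Assumption \ref{density}.
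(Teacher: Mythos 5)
Your proof is correct and takes essentially the same route as the paper's: both reduce the problem to the observation that the two indicators can differ only when $S_i = V_i + X_i'\psi_0$ lies within $\sup\|X\|\cdot\|\widetilde\psi_n-\psi_0\|$ of zero, and then use the fact that $S$ places vanishing mass near the origin. The only cosmetic difference is in handling the random threshold: the paper extracts a deterministic rate $r_n\to 0$ with $\mathbb P(\|\widetilde\psi_n-\psi_0\|\geq r_n)\to 0$ and shows $L_1$ convergence of the supremum over $\{\|\psi-\psi_0\|<r_n\}$ (citing the proof of Lemma 4 of Horowitz for the small-ball probability), whereas you fix $\delta$, condition on $\{\|\widetilde\psi_n-\psi_0\|\leq\delta\}$, and apply the law of large numbers to the dominating sum, using the bounded density in Assumption \ref{density}; both are valid under the stated hypotheses.
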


\begin{proof}[Proof]
	Since $\widetilde \psi_n \rightarrow_p \psi_0$, there exists a sequence $r_n \ (n \in \mathbb N)$ such that $r_n \rightarrow 0$ and $\mathbb P (\| \widetilde \psi_n - \psi_0 \| \geq r_n) \rightarrow 0$ as $n \rightarrow \infty$. 
Let $\Theta_n := \{ \psi : \| \psi - \psi_0 \| < r_n \}$. Observe that the left-hand side is bounded by $\sup_{\theta \in \Theta_n} n^{-1} \sum_{i = 1}^n |1 \{ V_i + X_i'\psi \geq 0 \} - 1 \{V_i + X_i'\psi_0 \geq 0 \}|$, with probability approaching one. Further,
\begin{align}\label{error ind}
	&\mathbb E \left[ \sup_{\theta \in \Theta_n} n^{-1} \sum_{i = 1}^n |1 \{ V_i + X_i'\psi \geq 0 \} - 1 \{V_i + X_i'\psi_0 \geq 0 \}| \right ] \notag \\
	\leq &\mathbb E \left[ \sup_{\theta \in \Theta_n} | 1 \{ V_i + X_i'\psi \geq 0 \} - 1 \{V_i + X_i'\psi_0 \geq 0 \} | \right].
\end{align}
It must be noted that, for fixed $\alpha > 0$,
\begin{align}\label{error ind expectation} 
	&E \left[ \sup_{\theta \in \Theta_n} |1 \{ V + X' \psi \geq 0 \} - 1 \{V + X'\psi_0 \geq 0 \}| \right] \notag \\
	\leq &E \left[ \sup_{\theta \in \Theta_n} |1 \{ V + X' \psi \geq 0 \} - 1 \{V + X'\psi_0 \geq 0 \}| 1\{|V + X'\psi_0| \geq \alpha \} \right] + 2 \mathbb P (|V + X'\psi_0| < \alpha).
\end{align}
For sufficiently large $n$, the first term on the right-hand side of (\ref{error ind expectation}) is zero, while the second term converges to zero, as $\alpha \rightarrow 0$ by the proof of lemma 4 of \cite{horowitz1992smoothed}.
Thus, the right-hand side of (\ref{error ind}) is $o(1)$. Since $L_1$ convergence implies the convergence in probability, we complete the proof.
\end{proof}

\bibliographystyle{asa}
\bibliography{cpcox}

\begin{thebibliography}{29}
\newcommand{\enquote}[1]{``#1''}
\expandafter\ifx\csname natexlab\endcsname\relax\def\natexlab#1{#1}\fi

\bibitem[{Br\'{e}maud(1981)}]{bremaud}
Br\'{e}maud, P. (1981), \textit{Point processes and queues: martingale
  dynamics}, Springer-Verlag.

\bibitem[{Deng et~al.(2022)Deng, Cai, and Zeng}]{deng2022maximum}
Deng, Y., Cai, J., and Zeng, D. (2022), \enquote{Maximum Likelihood Estimation
  for Cox Proportional Hazards Model with a Change Hyperplane.}
  \textit{Statist. Sinica}, 32, 983--1006.

\bibitem[{Fleming and Harrington(2011)}]{fleming2011counting}
Fleming, T.~R. and Harrington, D.~P. (2011), \textit{Counting processes and
  survival analysis}, John Wiley \& Sons.

\bibitem[{Gin{\'e} and Guillou(2001)}]{gine2001consistency}
Gin{\'e}, E. and Guillou, A. (2001), \enquote{On consistency of kernel density
  estimators for randomly censored data: rates holding uniformly over adaptive
  intervals,} \textit{Ann. Inst. Henri Poincar{\'e} D}, 37, 503--522.

\bibitem[{Gin{\'e} and Nickl(2021)}]{gine2021mathematical}
Gin{\'e}, E. and Nickl, R. (2021), \textit{Mathematical foundations of
  infinite-dimensional statistical models}, Cambridge university press.

\bibitem[{Hammer et~al.(1996)Hammer, Katzenstein, Hughes, Gundacker, Schooley,
  Haubrich, Henry, Lederman, Phair, Niu, et~al.}]{hammer1996trial}
Hammer, S.~M., Katzenstein, D.~A., Hughes, M.~D., Gundacker, H., Schooley,
  R.~T., Haubrich, R.~H., Henry, W.~K., Lederman, M.~M., Phair, J.~P., Niu, M.,
  et~al. (1996), \enquote{A Trial Comparing Nucleoside Monotherapy with
  Combination Therapy in HIV-Infected Adults with CD4 Cell Counts from 200 to
  500 per Cubic Millimeter,} \textit{New England Journal of Medicine}, 335,
  1081--1090.

\bibitem[{He et~al.(2018)He, Lin, and Tu}]{he2018single}
He, Y., Lin, H., and Tu, D. (2018), \enquote{A single-index threshold Cox
  proportional hazard model for identifying a treatment-sensitive subset based
  on multiple biomarkers,} \textit{Stat. Med.}, 37, 3267--3279.

\bibitem[{Horowitz(1992)}]{horowitz1992smoothed}
Horowitz, J.~L. (1992), \enquote{A Smoothed Maximum Score Estimator for the
  Binary Response Model,} \textit{Econometrica}, 505--531.

\bibitem[{Hu et~al.(2021)Hu, Huang, Liu, Sun, and Zhao}]{hu2021subgroup}
Hu, X., Huang, J., Liu, L., Sun, D., and Zhao, X. (2021), \enquote{Subgroup
  analysis in the heterogeneous Cox model,} \textit{Stat. Med.}, 40, 739--757.

\bibitem[{Kang et~al.(2017)Kang, Lu, and Song}]{kang2017subgroup}
Kang, S., Lu, W., and Song, R. (2017), \enquote{Subgroup detection and sample
  size calculation with proportional hazards regression for survival data,}
  \textit{Stat. Med.}, 36, 4646--4659.

\bibitem[{Kato(2019)}]{katoep}
Kato, K. (2019), \textit{Lecture notes on empirical process theory}, lecture
  notes available from the author's web-page:
  \url{https://sites.google.com/site/kkatostat/home/research?authuser=0}.

\bibitem[{Kosorok(2008)}]{kosorok2008introduction}
Kosorok, M.~R. (2008), \textit{Introduction to empirical processes and
  semiparametric inference.}, Springer.

\bibitem[{Lee et~al.(2021)Lee, Liao, Seo, and Shin}]{lee2021factor}
Lee, S., Liao, Y., Seo, M.~H., and Shin, Y. (2021), \enquote{Factor-driven
  two-regime regression,} \textit{Ann. Statist.}, 49, 1656--1678.

\bibitem[{Li et~al.(2021)Li, Li, Jin, and Kosorok}]{li2021multithreshold}
Li, J., Li, Y., Jin, B., and Kosorok, M.~R. (2021), \enquote{Multithreshold
  change plane model: Estimation theory and applications in subgroup
  identification,} \textit{Stat. Med.}, 40, 3440--3459.

\bibitem[{Lu et~al.(2013)Lu, Zhang, and Zeng}]{lu2013variable}
Lu, W., Zhang, H.~H., and Zeng, D. (2013), \enquote{Variable selection for
  optimal treatment decision,} \textit{Stat. Methods Med. Res.}, 22, 493--504.

\bibitem[{Mukherjee et~al.(2020)Mukherjee, Banerjee, Mukherjee, and
  Ritov}]{mukherjee2020asymptotic}
Mukherjee, D., Banerjee, M., Mukherjee, D., and Ritov, Y. (2020),
  \enquote{Asymptotic normality of a linear threshold estimator in fixed
  dimension with near-optimal rate,} \textit{arXiv preprint arXiv:2001.06955}.

\bibitem[{Negassa et~al.(2005)Negassa, Ciampi, Abrahamowicz, Shapiro, and
  Boivin}]{negassa2005tree}
Negassa, A., Ciampi, A., Abrahamowicz, M., Shapiro, S., and Boivin, J.-F.
  (2005), \enquote{Tree-structured subgroup analysis for censored survival
  data: validation of computationally inexpensive model selection criteria,}
  \textit{Stat. Comput.}, 15, 231--239.

\bibitem[{Petrov(2007)}]{petrov2007lower}
Petrov, V.~V. (2007), \enquote{On lower bounds for tail probabilities,}
  \textit{J. Statist. Plann. Inference}, 137, 2703--2705.

\bibitem[{Pons(2003)}]{pons2003estimation}
Pons, O. (2003), \enquote{Estimation in a Cox regression model with a
  change-point according to a threshold in a covariate,} \textit{Ann.
  Statist.}, 31, 442--463.

\bibitem[{Seo and Linton(2007{\natexlab{a}})}]{seo2007smoothed}
Seo, M.~H. and Linton, O. (2007{\natexlab{a}}), \enquote{A smoothed least
  squares estimator for threshold regression models,} \textit{J. Econometrics},
  141, 704--735.

\bibitem[{Seo and Linton(2007{\natexlab{b}})}]{seo2005working}
--- (2007{\natexlab{b}}), \enquote{A smoothed least squares estimator for
  threshold regression models,} \textit{LSE STICERD Research Paper No. EM496},
  available at SSRN: \url{https://ssrn.com/abstract=1163546}.

\bibitem[{van~der Vaart(2000)}]{van2000asymptotic}
van~der Vaart, A. (2000), \textit{Asymptotic statistics}, Cambridge university
  press.

\bibitem[{van~der Vaart(2002)}]{van2002semiparametric}
--- (2002), \enquote{Semiparametric Statistics,} in \textit{Lectures on
  Probability Theory and Statistics (Saint-Flour, 1999)}, Springer, pp.
  331--457.

\bibitem[{van~der Vaart and Wellner(1996)}]{van1996weak}
van~der Vaart, A. and Wellner, J.~A. (1996), \textit{Weak convergence and
  empirical processes: with applications to statistics}, Springer Science \&
  Business Media.

\bibitem[{van~der Vaart and Wellner(2000)}]{vaart2000preservation}
--- (2000), \enquote{Preservation Theorems for Glivenko-Cantelli and Uniform
  Glivenko-Cantelli Classes,} in \textit{High Dimensional Probability II},
  Springer, pp. 115--133.

\bibitem[{Wei and Kosorok(2018)}]{wei2018change}
Wei, S. and Kosorok, M.~R. (2018), \enquote{The change-plane Cox model,}
  \textit{Biometrika}, 105, 891--903.

\bibitem[{Wu et~al.(2016)Wu, Zheng, and Yu}]{wu2016subgroup}
Wu, R.-f., Zheng, M., and Yu, W. (2016), \enquote{Subgroup Analysis with
  Time-to-Event Data Under a Logistic-Cox Mixture Model,} \textit{Scand. J.
  Stat.}, 43, 863--878.

\bibitem[{Yu and Fan(2021)}]{Yu2020threshold}
Yu, P. and Fan, X. (2021), \enquote{Threshold Regression with a Threshold
  Boundary,} \textit{J. Bus. Econom. Statist.}, 39, 953--971.

\bibitem[{Zhang et~al.(2021)Zhang, Wang, and Zhu}]{zhang2021single}
Zhang, Y., Wang, H.~J., and Zhu, Z. (2021), \enquote{Single-index Thresholding
  in Quantile Regression,} \textit{J. Amer. Statist. Assoc.}, 1--16.

\end{thebibliography}

\clearpage


\end{document}